\newcommand{\hcirc}{\accentset{\circ}{h}}
\newcommand{\hbarcirc}{\accentset{\circ}{\bar{h}}}
\newtheorem{prop}{Proposition}
\newtheorem{thm}[prop]{Theorem}
\newtheorem{lem}[prop]{Lemma}
\newtheorem{coro}[prop]{Corollary}
\newtheorem{rema}[prop]{Remark}
\title[Foliations by stable constant mean curvature spheres]{Foliations of asymptotically flat manifolds by stable constant mean curvature spheres}
\author{Michael Eichmair}
\address{
	\textnormal{Michael Eichmair \newline  \indent
		University of Vienna \newline \indent
		Faculty of Mathematics  \newline \indent
		Oskar-Morgenstern-Platz 1 \newline \indent
		1090 Vienna, 	Austria  \newline\indent 
		\href{https://orcid.org/0000-0001-7993-9536}{https://orcid.org/0000-0001-7993-9536} \newline\indent	
		\href{mailto:michael.eichmair@univie.ac.at}{michael.eichmair@univie.ac.at}}
}
\author{Thomas Koerber}
\address{\textnormal{Thomas Koerber  \newline \indent
		University of Vienna \newline \indent
		Faculty of Mathematics  \newline \indent
		Oskar-Morgenstern-Platz 1 \newline \indent 1090 Vienna,	Austria \newline\indent 
		\href{https://orcid.org/0000-0003-1676-0824}{https://orcid.org/0000-0003-1676-0824} \newline \indent
		\href{mailto:thomas.koerber@univie.ac.at}{thomas.koerber@univie.ac.at}}
}
\begin{document}
\date{\today}
\onehalfspacing

\begin{abstract}

Let $(M,g)$ be an asymptotically flat Riemannian manifold of dimension $n\geq 3$ with positive mass. We give a short proof based on Lyapunov-Schmidt reduction of the existence of an asymptotic foliation of $(M, g)$ by stable constant mean curvature spheres. Moreover, we show that the geometric center of mass of the foliation agrees with the Hamiltonian center of mass of $(M,g)$. In dimension $n = 3$, these results were shown previously by C.~Nerz using a different approach. In the case where $n=3$ and the scalar curvature of $(M, g)$ is nonnegative, we prove that the leaves of the asymptotic foliation are the only large stable constant mean curvature spheres that enclose the center of $(M, g)$. This was shown previously under more restrictive decay assumptions and using a different method by S.~Ma.

\end{abstract}
\maketitle	
	\section{Introduction}
		Let $(M,g)$ be an asymptotically flat Riemannian manifold of dimension $n\geq 3$. We refer to Appendix \ref{af manifolds appendix} for  definitions and conventions related to such manifolds that are used in this paper.  \\ \indent 
	Asymptotically flat manifolds arise as initial data for the Einstein field equations modeling isolated gravitational systems. They have been studied extensively in the context of mathematical relativity. It has been established that  the asymptotic geometry of such initial data is intricately tied to the geometry of large stable constant mean curvature spheres in $(M, g)$. \\ \indent 
	The goal of this paper is to provide short, conceptually simple new proofs of some of these results that work for general asymptotically flat Riemannian manifolds in all  dimensions. To describe these results and related work, recall from, e.g.,~\cite{ADM} that the mass of $(M,g)$ is given by
	\begin{align} \label{mass}
	m=\lim_{\lambda\to\infty}\frac{1}{2\,n\,(n-1)\,\omega_n}\,\lambda^{-1}\int_{S_\lambda(0)}\sum_{i,\,j=1}^n\,x^i\,[(\partial_jg)(e_i,e_j)-(\partial_i g)(e_j,e_j)]\,\mathrm{d}\bar\mu
	\end{align}
	where the computation is carried out in an asymptotically flat chart \eqref{asymptotically flat} of $(M,g)$, $e_1,\ldots,e_n$ is the standard basis of $\mathbb{R}^n$, and  $\omega_n$ is the Euclidean volume of an $n$-dimensional Euclidean unit ball. 
	The bar indicates that a geometric quantity is computed with respect to the Euclidean background metric $\bar g$.  Assume that $m\neq 0$.  The Hamiltonian center of mass of $(M,g)$ is defined by $C=(C^1,\ldots,\,C^n)$ where
	\begin{equation}
	\begin{aligned}
	C^\ell =\,&\lim_{\lambda\to\infty}\frac{1}{2\,n\,(n-1)\,\omega_n\,m}\,\lambda^{-1}\,\int_{S_\lambda(0)}\sum_{i,\,j=1}^nx^\ell\,x^j\,\big[(\partial_ig)(e_i,e_j)-(\partial_jg)(e_i,e_i)\big]
	\\&\qquad\qquad\quad \qquad\qquad\qquad\qquad -\sum_{i=1}^n\big[x^i\,g(e_i,e_\ell)-x^\ell\,g(e_i,e_i)\big]\,\mathrm{d}\bar\mu
	\end{aligned}
	\label{center of mass}
	\end{equation}
	provided that the limits on the right-hand side exist for $\ell=1,\ldots,\,n$; see \cite{RT}. \\ \indent A two-sided  hypersurface  $\Sigma\subset M$ is said to have constant mean curvature if its scalar mean curvature $H(\Sigma)$ is constant. We survey general properties of such hypersurfaces in Appendix \ref{CMC appendix}. In the case where $n=3$,   D.~Christodoulou and S.-T.~Yau \cite{ChristodoulouYau} have shown that the quasi-local Hawking mass of a stable constant mean curvature sphere is nonnegative if $(M,g)$ has nonnegative scalar curvature. This observation suggests that the geometry of stable constant mean curvature spheres encodes information about the strength of the gravitational field in the domain they enclose. In their pioneering work \cite{HuiskenYau}, G.~Huisken and S.-T.~Yau have shown that the asymptotic region of an asymptotically flat Riemannian three-manifold that is asymptotic to Schwarzschild
	\eqref{Schwarzschild metric} is foliated by stable constant mean curvature spheres; see also the work \cite{Ye} of R.~Ye in this direction. They have also established a characterization result for the leaves of this foliation, which has been sharpened by J.~Qing and G.~Tian in \cite{QingTian}. We provide further details on these and related contributions in Appendix \ref{Schwarzschild appendix}.

\subsection*{Outline of related results}
 Remarkably, it turns out that many of these results also hold when $(M,g)$ is merely asymptotically flat \eqref{asymptotically flat}. The following result in this direction has been proven by C.~Nerz; see also the discussion on \cite[p.~947]{Ma}. To state the results,  we define the area radius $\lambda(\Sigma)>0$ of a closed hypersurface $\Sigma\subset M$ by $$n\,\omega_n\, \lambda(\Sigma)^{n-1}=|\Sigma|$$ and the inner radius $\rho(\Sigma)$ of such a surface by
 $$
 \rho(\Sigma)=\sup\{r>1: B_{r}\cap\Sigma=\emptyset\}.$$
 We refer to Appendix \ref{af manifolds appendix} for the definition of the sets $B_r$.
 
\begin{thm}[{\cite[Theorems 5.1, 5.2, and 5.3]{Nerz}}]
	Let $(M,g)$ be a connected complete Riemannian three-manifold that is $C^2$-asymptotically flat \eqref{asymptotically flat}  with mass $m\neq 0$. There exists $H_0>0$ and a distinguished family \begin{align} \label{cmc foliation 2} \{\Sigma(H):H\in(0,H_0)\}
	\end{align} 
	 of constant mean curvature spheres \label{existence thm nerz} $\Sigma(H)\subset M$ with mean curvature $H$ that forms a foliation of the complement of a compact subset of  $M$. The spheres $\Sigma(H)$ are stable if and only if $m>0$. \\ \indent  Moreover, given $\delta>0$, there exists $r>1$ such that every stable constant mean curvature sphere $\Sigma\subset M$ that encloses $B_r$ with 
	\begin{align} 
	\delta \,\lambda(\Sigma)<\rho(\Sigma) \label{pinching}
	\end{align} 
	satisfies $\Sigma=\Sigma(H)$ for some $H\in(0,H_0)$.
\end{thm}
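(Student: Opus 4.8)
The plan is to prove the existence statement by Lyapunov--Schmidt reduction and then to deduce the characterization from a priori estimates that force a stable constant mean curvature sphere satisfying \eqref{pinching} into the perturbative regime where the reduction applies. For the existence, I would represent candidate surfaces, for a center $a\in\mathbb{R}^3$ and a radius $r>1$, as normal graphs $\Sigma(a,r,u)$ over the coordinate sphere $S_r(a)$ with graph function $u$ small in a weighted $C^{2,\alpha}$-norm, and study the map $u\mapsto H(\Sigma(a,r,u))$. Its linearization at $u=0$ is the Jacobi operator $L=\Delta_{S_r}+|A|^2+\operatorname{Ric}(\nu,\nu)$, which on the round sphere in the flat background reduces to $\Delta_{S_r}+2\,r^{-2}$; its approximate kernel is spanned by the constant function and the three restrictions $x^1,x^2,x^3$ of the coordinate functions, the latter generating Euclidean translations. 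The first step is then to solve $H(\Sigma(a,r,u))=h$ for the pair $(u,h)$ \emph{modulo} the three-dimensional space of translational modes: after projecting these off, $L$ becomes uniformly invertible as $r\to\infty$, so the implicit function theorem produces a unique $u=u(a,r)$ (and $h=h(a,r)\approx 2\,r^{-1}$) depending smoothly on $(a,r)$, with quantitative decay estimates inherited from asymptotic flatness.

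What remains is the finite-dimensional \emph{reduced problem}: the three projections $\Phi^i(a,r)$ of $H(\Sigma(a,r,u(a,r)))$ onto the translational modes must vanish. Expanding $\Phi^i$ with the help of \eqref{mass} and \eqref{center of mass}, I expect the leading term to be proportional to $m\,(a^i-C^i)$ up to lower-order corrections, so that the Jacobian $\partial\Phi/\partial a$ is invertible precisely when $m\neq 0$. A further application of the implicit function theorem then yields, for each sufficiently small $h$, a unique center $a$ with $a\to C$ as $h\to 0$; after inverting $h=h(a,r)$ to parametrize by the prescribed mean curvature, this produces the leaves $\Sigma(H)$ and the foliation \eqref{cmc foliation 2}. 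Stability is read off from the volume-preserving second variation restricted to the translational modes: the corresponding small eigenvalue of $-L$ has, to leading order, the sign of $m$, so the leaves are stable if and only if $m>0$.

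For the characterization, I would argue that a stable constant mean curvature sphere $\Sigma$ enclosing a large compact set and satisfying the pinching condition \eqref{pinching} must lie within the range of the construction. The stability inequality, combined with the non-negativity of the Hawking mass for stable constant mean curvature spheres in the presence of non-negative scalar curvature (Christodoulou--Yau), should yield uniform curvature and roundness estimates, showing that $\Sigma$ is a small normal graph over some coordinate sphere $S_r(a)$ once $\lambda(\Sigma)$ is large. These a priori estimates place $\Sigma$ in the ball on which the implicit function theorem of the first step applies, whence $u=u(a,r)$ and $a$ solves the reduced equations; the non-degeneracy afforded by $m\neq 0$ then forces $a$ to be the constructed center, so $\Sigma=\Sigma(H)$.

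The crux is the characterization: extracting \emph{uniform} roundness and position estimates for stable constant mean curvature spheres from stability together with the comparatively weak pinching hypothesis \eqref{pinching}, rather than from the much stronger asymptotics available in the Schwarzschild case. The condition \eqref{pinching} only controls the inner radius relative to the area radius and does not by itself prevent the surface from drifting off-center or distorting; the stability inequality and the Hawking mass monotonicity must be leveraged carefully, and uniformly in $\Sigma$, to rule out such behavior and to land in the perturbative regime where the local uniqueness from the reduction can be invoked.
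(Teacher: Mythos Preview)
Your overall strategy---Lyapunov--Schmidt reduction for existence, then a priori estimates to bring a stable CMC sphere satisfying \eqref{pinching} into the perturbative regime---matches the paper's. But two points in your sketch are off and would cause trouble if carried out as written.

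First, the reduced expansion. The paper sets up the reduction variationally: after solving the implicit-function step at fixed enclosed volume, it studies the \emph{reduced area function} $G_\lambda(\xi)=\lambda^{-1}|\Sigma_{\xi,\lambda}|$ on the rescaled center $\xi=a/\lambda$, and proves (via the integration by parts of Lemma~\ref{ce int by parts rema} and the flux identity of Lemma~\ref{mass and com convergence}) that
\[
(\bar D G_\lambda)|_\xi=8\pi m\,\xi+o(1),\qquad G_\lambda(\xi)=G_\lambda(0)+4\pi m\,|\xi|^2+o(1).
\]
The Hamiltonian center of mass $C$ does \emph{not} appear at leading order; under the hypotheses of Theorem~\ref{existence thm nerz} alone $C$ need not even be defined, and one only obtains $\xi(\lambda)=o(1)$, not $a\to C$. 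Your claim that $\Phi^i\sim m(a^i-C^i)$ and that the constructed center tends to $C$ is exactly the content of Theorem~\ref{com thm}, which requires the additional Regge--Teitelboim conditions and a separate argument (Lemmas~\ref{first der comp}--\ref{third der comp}). The paper's variational framing also makes existence and stability transparent: $G_\lambda$ is strictly convex (resp.\ concave) when $m>0$ (resp.\ $m<0$), so a unique critical point exists and the sign of the second variation on the translational modes is immediate from $\bar D^2 G_\lambda$ via Lemma~\ref{variation lemma}.

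Second, for the characterization you invoke non-negativity of the Hawking mass ``in the presence of non-negative scalar curvature''. Theorem~\ref{existence thm nerz} does not assume $R\ge 0$; the Huisken--Ilmanen monotonicity is neither available nor needed here. Under the strong pinching $\delta\,\lambda(\Sigma)<\rho(\Sigma)$ one has $|x|\asymp\lambda(\Sigma)$ uniformly on $\Sigma$, so the Christodoulou--Yau inequality \eqref{CY estimate} together with the integrability of $R$ and Lemma~\ref{hy integral lemma} already gives the needed $L^2$- and then pointwise curvature bounds (as in Remark~\ref{h circ estimate bar} and \eqref{H estimate}). With these in hand the paper argues, as in \cite[Theorem~2]{BrendleEichmair}, that $\Sigma$ is a small graph over some $S_{\xi,\lambda}$ with $|\xi|<1-\delta$; uniqueness then follows from the convexity of $G_\lambda$, which has at most one critical point. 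Your worry that \eqref{pinching} ``does not prevent drifting'' is misplaced in this regime: it precisely bounds $|\xi|$ away from $1$, which is all that is required to land in the domain of Proposition~\ref{cmc ls prop}.
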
	

For some settings, a stronger characterization of the leaves of the foliation \eqref{cmc foliation 2} than that stated in Theorem \ref{existence thm nerz} has been obtained. The following global uniqueness result has been established by S.~Ma in \cite{Ma} building on techniques developed in \cite{QingTian}.
\begin{thm}[{\cite[Theorem 1.1]{Ma}}]
	Let $(M,g)$ be a connected complete Riemannian three-manifold that is $C^4$-asymptotically flat of rate $\tau=1$  with mass $m\neq 0$. There exists $r>1$ such that every stable constant mean curvature sphere $\Sigma\subset M$ that encloses $B_r$ belongs to the foliation \label{Ma thm} \eqref{cmc foliation 2}.
\end{thm}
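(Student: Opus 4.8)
The plan is to deduce Theorem \ref{Ma thm} from Theorem \ref{existence thm nerz}. By the characterization in the second part of Theorem \ref{existence thm nerz}, it suffices to exhibit $\delta>0$ and a compact set $K\subset M$ such that every stable constant mean curvature sphere $\Sigma\subset M$ enclosing $K$ automatically satisfies the pinching condition \eqref{pinching}; any such $\Sigma$ is then a leaf of \eqref{cmc foliation 2}. The content of the theorem is thus an a priori geometric estimate: a large stable constant mean curvature sphere cannot be eccentric, and its closest approach to the center must be a definite fraction of its area radius.

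I would argue by contradiction, assuming a sequence $\Sigma_i$ of stable constant mean curvature spheres enclosing an exhaustion $K_i\nearrow M$ with $\rho(\Sigma_i)/\lambda(\Sigma_i)\to 0$. Enclosing $K_i$ forces $\rho(\Sigma_i)\to\infty$ and hence $\lambda(\Sigma_i)\to\infty$, with the area radius dominating, so the $\Sigma_i$ lie in the far asymptotic region where $g$ is a small perturbation of $\bar g$ and $H(\Sigma_i)$ is comparable to $2\,\lambda(\Sigma_i)^{-1}\to 0$. The first key step is a roundness estimate extracted from stability: combining the volume-preserving stability inequality with the Gauss equation and the Gauss--Bonnet theorem on the topological sphere $\Sigma_i$, and using $m\neq 0$ at the critical rate $\tau=1$, I would bound the trace-free second fundamental form by $\int_{\Sigma_i}|\hcirc|^2\leq C\,\lambda(\Sigma_i)^{-1}$. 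After rescaling by $\lambda(\Sigma_i)^{-1}$ and passing to a subsequence, this forces the rescaled surfaces to converge to a round Euclidean unit sphere centered at some point $\tilde p_\infty$; since each $\Sigma_i$ eventually encloses the center, the origin lies inside the limit sphere, so $|\tilde p_\infty|\leq 1$ and the rescaled inner radius tends to $1-|\tilde p_\infty|$.

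The second key step is a position estimate showing that the Euclidean center of $\Sigma_i$ is sublinear in its area radius, so that $|\tilde p_\infty|=0$ and the rescaled inner radius tends to $1$, contradicting $\rho(\Sigma_i)/\lambda(\Sigma_i)\to 0$. Here I would test the stability inequality against the Euclidean coordinate functions restricted to $\Sigma_i$; at rate $\tau=1$ with $m\neq 0$ the mass term enters with a definite sign and provides a restoring effect that pins the center near the Hamiltonian center of mass \eqref{center of mass}, preventing the surface from drifting a definite fraction of the way out. Combining roundness with this position bound yields the pinching \eqref{pinching} for a uniform $\delta$ on all sufficiently large stable constant mean curvature spheres, completing the reduction.

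The main obstacle is the position estimate at the borderline rate $\tau=1$, where the mass contribution and the metric error terms enter the stability inequality at exactly the same order, so that the sign of the $m$-term must be isolated with care rather than absorbed into error. The difficulty is compounded by the fact that no a priori roundness or concentricity may be assumed along the degenerating sequence; making the definite sign effective uniformly, and upgrading the integral curvature bound of the first step to the geometric convergence used in the endgame, is the delicate heart of the argument.
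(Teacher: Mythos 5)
A preliminary remark on the benchmark: the paper does not prove Theorem \ref{Ma thm} at all. It is quoted as S.~Ma's result, Ma's method is only sketched in Appendix \ref{other methods appendix}, and the paper's own uniqueness theorems are deliberately incomparable to it: Theorem \ref{uniqueness thm} assumes $R\geq 0$ and $m>0$, while Theorem \ref{uniqueness thm 2} assumes both the extra decay $R=O(|x|^{-5/2-\tau})$ and the pinching \eqref{weaker pinch}. So your proposal must stand on its own. Its overall frame is correct and is indeed Ma's: exhibit $\delta>0$ and a compact set $K$ such that every stable constant mean curvature sphere enclosing $K$ automatically satisfies \eqref{pinching}, then invoke the characterization in Theorem \ref{existence thm nerz}; this a priori ``radius pinching estimate'' is exactly the content of Ma's paper. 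The problem is that both of your key steps assert estimates that do not follow from the tools you name, and they are precisely where the difficulty lives.

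Concretely, the roundness bound $\int_{\Sigma_i}|\hcirc|^2\,\mathrm{d}\mu\leq C\,\lambda(\Sigma_i)^{-1}$ cannot be obtained from stability, the Gauss equation, and Gauss--Bonnet. Those ingredients give the Christodoulou--Yau inequality \eqref{CY estimate}, $\tfrac23\int_{\Sigma_i}(|\hcirc|^2+R)\,\mathrm{d}\mu\leq 16\,\pi-\int_{\Sigma_i}H^2\,\mathrm{d}\mu$, in which the mass never appears (so ``using $m\neq 0$'' is not a mechanism here); to extract anything from it one needs a lower bound on the Willmore energy. Under Ma's hypotheses one only knows $R=O(|x|^{-3})$, $R\in L^1$, with no sign, and then the Euclidean Willmore inequality together with Lemma \ref{hy integral lemma} yields only $\int_{\Sigma_i}|\hcirc|^2\,\mathrm{d}\mu=O(\rho(\Sigma_i)^{-1})$ --- far weaker than $O(\lambda(\Sigma_i)^{-1})$ exactly in the regime $\rho(\Sigma_i)\ll\lambda(\Sigma_i)$ of your contradiction sequence. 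The paper reaches $O(\lambda^{-1})$ only when $R\geq 0$, and even then not from stability alone but via the global Huisken--Ilmanen inverse mean curvature flow argument (Lemma \ref{CE estimate}); when $R$ changes sign it must additionally assume $R=O(|x|^{-5/2-\tau})$, which does not follow from $C^4$-asymptotic flatness of rate $1$. With only $O(\rho^{-1})$ control, Proposition \ref{h circ estimate} degrades to $|\hcirc|=O(|x|^{-2})+O(|x|^{-1}\,\rho(\Sigma_i)^{-1/2})$, and the downstream error terms --- in the $C^1$ convergence of the rescaled surfaces and in any center estimate --- acquire factors of order $\rho(\Sigma_i)^{-1/2}\log\bigl(\lambda(\Sigma_i)/\rho(\Sigma_i)\bigr)$, which are unbounded when $\lambda(\Sigma_i)$ is exponentially large relative to $\rho(\Sigma_i)$. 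This is precisely why Ma must carry out the almost-harmonic-map analysis of $\nu(\Sigma_i)$ in the transition region $\rho(\Sigma_i)\ll|x|\ll\lambda(\Sigma_i)$, and why the present paper, which avoids that analysis, does not recover Ma's theorem. Your second step inherits the same defect: testing the stability inequality against coordinate functions is the Huisken--Yau mechanism, which requires Schwarzschild-type asymptotics; at a general rate $\tau=1$ the metric error terms enter at exactly the order of the mass term, as you yourself concede in calling this ``the delicate heart of the argument.'' Conceding the difficulty is not resolving it; as written, both pillars of the proposed proof are unsupported.
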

\begin{rema}
	A.~Carlotto and R.~Schoen \cite{SchoenCarlotto} have constructed asymptotically flat Riemannian three-manifolds with positive mass and nonnegative scalar curvature that contain a Euclidean  half-space. Given any number $V>0$, such manifolds contain \label{CS remark} infinitely many stable constant mean curvature spheres whose enclosed volume is equal to $V$. Note that these spheres are neither isoperimetric nor do they enclose the center of $(M,g)$; see \cite[Theorem 1.1]{CESH}.
\end{rema}
\begin{rema}
	The assumption $\tau=1$ in Theorem \ref{Ma thm} is needed throughout \cite[\S4 and \S5]{Ma}.
\end{rema}
In the case where $n=3$, $(M,g)$ is asymptotically flat of rate $\tau\in(1/2,1]$, and $g$ satisfies additional asymptotic assumptions, L.-H.~Huang has proven the following semi-global uniqueness result in \cite{Huang}. We review the so-called Regge-Teitelboim conditions in Appendix \ref{af manifolds appendix}. 
\begin{thm}[{\cite[Theorem 2]{Huang}}]
	Let $(M,g)$ be a connected complete Riemannian three-manifold that is $C^5$-asymptotically flat of rate $\tau\in(1/2,1]$  with mass $m> 0$.		Suppose that $(M,g)$  satisfies the $C^5$-Regge-Teitelboim conditions \eqref{RT} of rate $\tau$. Given $s>1$ with \label{huang uniqueness} \begin{align}
	s<\frac{4+2\,\tau}{5-\tau}, 
	\end{align} 
	there is $r>1$  such that every stable constant mean curvature sphere $\Sigma\subset M$ that encloses $B_r$  with 
	\begin{align} \label{huang pinch}\lambda(\Sigma)<\rho(\Sigma)^s\end{align} 
	belongs to the family \eqref{cmc foliation 2}.
\end{thm}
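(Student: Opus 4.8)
The plan is to reduce the claimed semi-global uniqueness to a statement about the position of the surface relative to the Hamiltonian center of mass, and then to invoke the local uniqueness of the leaves of the foliation \eqref{cmc foliation 2}. Given a stable constant mean curvature sphere $\Sigma$ enclosing a large compact set $K$ and satisfying the pinching condition \eqref{huang pinch}, I would first establish that $\Sigma$ is a small normal graph over a Euclidean coordinate sphere $S_\lambda(a)$, where $\lambda = \lambda(\Sigma)$ is the area radius and $a \in \mathbb{R}^3$ is an approximate center. The roundness comes from combining the stability inequality with the Christodoulou--Yau estimate \cite{ChristodoulouYau}, which forces the $L^2$-norm of the traceless second fundamental form $\mathring{A}$ to be small; standard elliptic estimates for the constant mean curvature equation then upgrade this to $C^2$-closeness to a round sphere. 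The pinching condition \eqref{huang pinch} guarantees that the scale $\rho(\Sigma)$ on which the ambient metric is evaluated does not degenerate too fast relative to $\lambda$, which is what keeps these a priori estimates effective even when $\Sigma$ is substantially off-center.

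The heart of the argument is a position estimate controlling $a$. Here I would test the stability inequality against the restrictions of the Euclidean coordinate functions $x^1, x^2, x^3$ to $\Sigma$ and expand the resulting integral identity to second order in the metric perturbation $g - \bar g$, using the asymptotically flat expansion \eqref{asymptotically flat}. This produces a balancing identity that equates, up to error terms, the displacement $a$ with the Hamiltonian center of mass $C$ defined in \eqref{center of mass}. The Regge--Teitelboim conditions \eqref{RT} are essential at this stage: they ensure that the odd part of the metric error decays one order faster, which cancels the otherwise divergent contribution to the balancing identity and makes the center of mass $C$ well-defined and dominant. The error terms in this identity scale like powers of $\lambda$ and $\rho$, and the precise range $s < (4 + 2\,\tau)/(5 - \tau)$ is exactly the condition under which these errors are of lower order than the center-of-mass term; tracking this balance is where the decay rate $\tau$ and the admissible off-center scale encoded by $s$ must be matched.

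Once the position estimate shows that $a$ converges to $C$ as $\lambda \to \infty$, so that $\Sigma$ is both round and centered essentially at the center of mass, the conclusion follows from the local structure of the foliation, which a Lyapunov--Schmidt reduction describes explicitly: near each round coordinate sphere of large area radius, the constant mean curvature spheres are parametrized by a finite-dimensional set of approximate centers, and the leaf $\Sigma(H)$ of Theorem \ref{existence thm nerz} corresponds to the center selected by the reduced problem. Since $\Sigma$ has been shown to lie in precisely this centered regime with matching mean curvature, it must coincide with $\Sigma(H)$. I expect the main obstacle to be the second step, namely making the balancing identity sharp enough in the off-center regime: for $s > 1$ the surface may be far from the asymptotically Schwarzschildean center, the metric is only controlled to rate $\tau$, and one must carefully exploit the Regge--Teitelboim cancellation to prevent the error terms from overwhelming the center-of-mass contribution. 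This is the step that dictates the quantitative threshold on $s$.
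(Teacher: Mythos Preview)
This theorem is not proved in the paper; it is quoted from \cite{Huang}, and the paper only summarizes Huang's argument in Appendix~\ref{other methods appendix}. Your outline matches that summary closely: curvature estimates from stability and Christodoulou--Yau give roundness (compare \cite[Corollary 4.11]{Huang} and Lemma~\ref{distance comp}), a balancing computation yields a barycenter estimate that places $\Sigma$ within reach of the implicit function theorem, and local uniqueness of the foliation finishes the argument. The role you assign to the Regge--Teitelboim conditions---killing the otherwise dominant odd-metric error in the balancing step---is exactly how they enter in \cite{Huang}, and your identification of this step as the one dictating the threshold on $s$ is correct.

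One point of terminology deserves care. You write that you would ``test the stability inequality against the restrictions of the Euclidean coordinate functions'' to obtain the position estimate. The stability inequality (second variation) is what drives the curvature estimates, but the barycenter control in Huang's argument---and in the analogous arguments of this paper, see the proofs of Theorems~\ref{uniqueness thm} and~\ref{uniqueness thm 2}---comes instead from the \emph{first variation} identity
\[
0=\int_{\Sigma} H\,g(a,\nu)\,\mathrm{d}\mu-H(\Sigma)\int_{\Sigma} g(a,\nu)\,\mathrm{d}\mu,
\]
which uses only that $H$ is constant. Expanding this via the first variation formula and integrating by parts is what produces the flux integrals that converge to the mass and center of mass. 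Plugging coordinate functions into the stability \emph{inequality} gives a one-sided bound, not the two-sided identity you need to pin down the center. This is not a fatal gap in your plan---the correct identity is standard and you clearly have the right picture---but as written the mechanism is misattributed.
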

\begin{rema}
	The  pinching condition \eqref{huang pinch} prevents a sequence $\{\Sigma_i\}_{i=1}^\infty$ of large stable constant mean curvature spheres $\Sigma_i\subset M$ from drifting too quickly with respect to the center of $(M,g)$; see Figure \ref{drifting}. 
\end{rema}
	\begin{figure}\centering
	\includegraphics[width=0.5\linewidth]{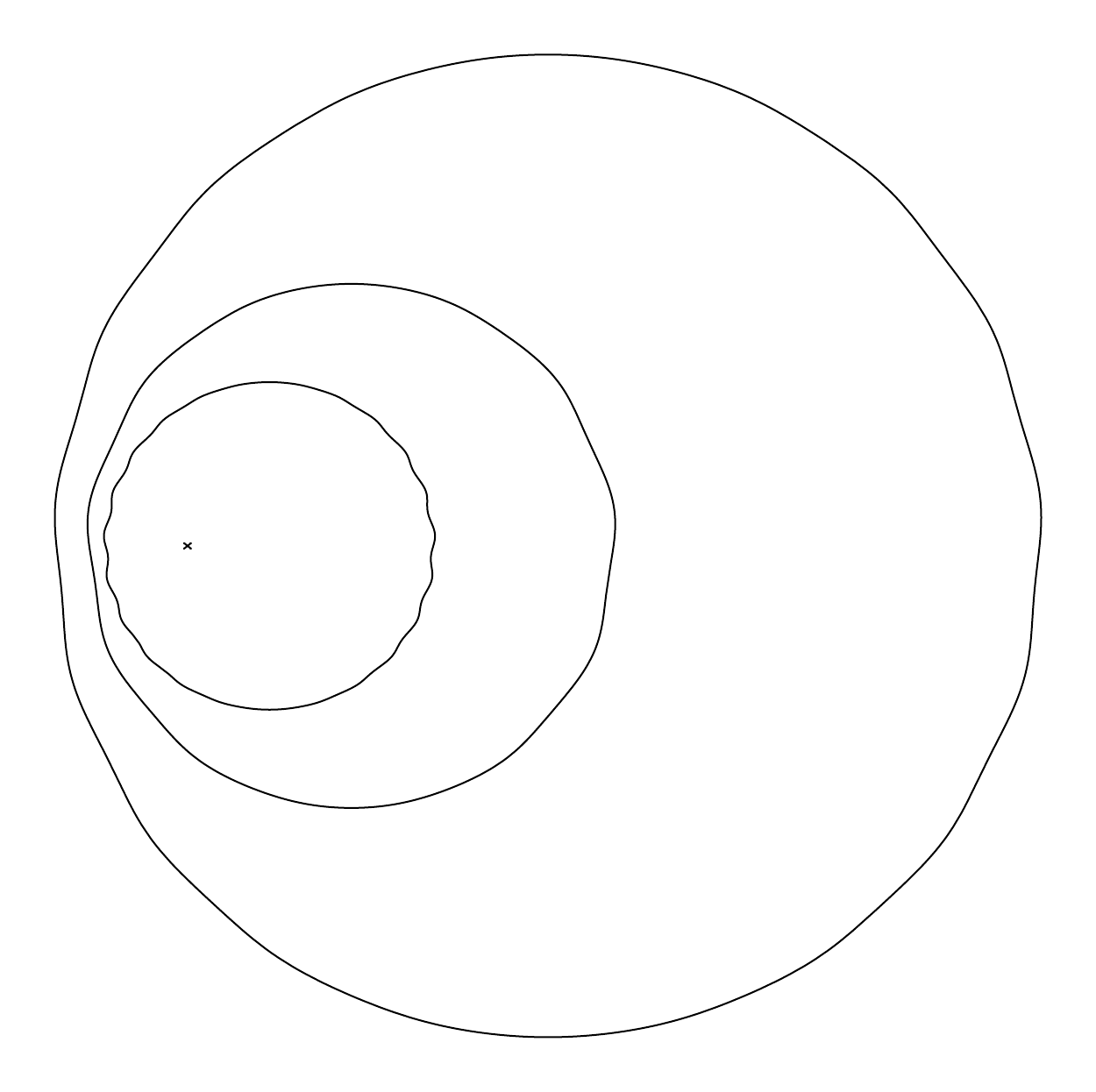}
	\caption{An illustration of the assumption \eqref{huang pinch}. The depicted spheres satisfy \eqref{huang pinch} with $s=2$ but not with $s=3/2$. The cross marks the origin in the asymptotically flat chart.}
	\label{drifting}
\end{figure}
 O.~Chodosh, Y.~Shi, H.~Yu, and the first-named author have shown in \cite{CESH} that the leaves of the foliation \eqref{cmc foliation 2} are globally unique as isoperimetric surfaces provided that $n=3$ and $(M,g)$ has nonnegative scalar curvature; see \cite{yu2020isoperimetry} for an alternative proof by H.~Yu. More precisely, the leaves are the unique surfaces of least area for the amount of volume they enclose. \\ \indent 
The foliation \eqref{cmc foliation 2} leads to a notion of a geometric center of mass $C_{CMC}=(C_{CMC}^1,\,C_{CMC}^2,\,C_{CMC}^3)$ of $(M,g)$ with components given by 
\begin{align} \label{geometri com} 
C_{CMC}^\ell=\lim_{H\to0}|\Sigma(H)|^{-1}\,\int_{\Sigma{(H)}}x^\ell\,\mathrm{d}\mu
\end{align} 
provided the limits on the right-hand side of \eqref{geometri com} exist for $\ell=1,\,2,\,3$. C.~Nerz has shown in \cite{Nerz} that this geometric center of mass agrees with the Hamiltonian center of mass of $(M,g)$ provided $g$ satisfies additional asymptotic assumptions.
\begin{thm}[{\cite[Theorem 6.3]{Nerz}}] 	Let $(M,g)$ be a connected complete Riemannian three-manifold that is $C^3$-asymptotically flat  with mass $m\neq 0$. 	Suppose that $(M,g)$  satisfies the weak $C^2$-Regge-Teitelboim \label{com nerz thm} conditions \eqref{RT Nerz}. The limit in \eqref{center of mass} exists if and only if the limit in \eqref{geometri com} exists, in which case $C=C_{CMC}$.
\end{thm}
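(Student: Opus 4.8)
The plan is to use the Lyapunov-Schmidt parametrization underlying the foliation \eqref{cmc foliation 2}: for each small $H$ the leaf $\Sigma(H)$ is a normal graph of a function $u = u_{\lambda,a}$ over the Euclidean coordinate sphere $S_\lambda(a)$, where $\lambda = \lambda(H) \to \infty$ as $H \to 0$, the center $a = a(H) \in \mathbb{R}^3$ is the solution of the finite-dimensional reduced equation, and $u_{\lambda,a}$ is small in the relevant weighted norm with vanishing $L^2(S_\lambda(a))$-projection onto the translational modes $x^1, x^2, x^3$. The strategy is to compare both centers of mass to the single quantity $a(H)$, by showing that $C_{CMC}^\ell = \lim_{H \to 0} a^\ell(H)$ and $C^\ell = \lim_{H \to 0} a^\ell(H)$ through comparisons whose error terms tend to zero; the asserted equivalence of the limits in \eqref{center of mass} and \eqref{geometri com}, together with their equality, then follows at once.

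First I would relate the geometric center of mass \eqref{geometri com} to $a(H)$. Writing $\Sigma(H)$ as a graph over $S_\lambda(a)$ and expanding the coordinate function $x^\ell$ and the induced area element $\mathrm{d}\mu$ about their values on the round sphere, one finds that $|\Sigma(H)|^{-1}\int_{\Sigma(H)} x^\ell \, \mathrm{d}\mu$ equals $a^\ell$ plus corrections that are linear in $u_{\lambda,a}$ and in $g - \bar g$ to leading order. The round sphere has vanishing first moments, and the projection of $u_{\lambda,a}$ onto $x^\ell$ vanishes by the choice of gauge; combined with the decay afforded by asymptotic flatness, the corrections tend to zero. Hence $|\Sigma(H)|^{-1}\int_{\Sigma(H)} x^\ell\,\mathrm{d}\mu = a^\ell(H) + o(1)$, so that the limit in \eqref{geometri com} exists if and only if $\lim_{H\to 0} a^\ell(H)$ does, with the two limits then equal.

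The core of the argument is to identify $\lim a^\ell(H)$ with the Hamiltonian center of mass \eqref{center of mass}. The reduced equation characterizing $a$ states that the $L^2(S_\lambda(a))$-projection of $H(\Sigma_{\lambda,a})$ onto each translational mode $x^\ell$ vanishes; I would expand $H(\Sigma_{\lambda,a})$ in the asymptotically flat expansion of $g$. The Euclidean contribution $2/\lambda$ integrates to zero against $x^\ell$ after centering, while the terms linear in $g - \bar g$ and its first derivatives can be rearranged, via integration by parts over $S_\lambda(a)$, into the flux integrand appearing in \eqref{center of mass}, together with a term proportional to $m\,a^\ell$ originating from the Schwarzschild-type part of $g$ and the definition of the mass \eqref{mass}. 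After normalizing, the reduced equation assumes the schematic form $m\,a^\ell = m\,C^\ell(\lambda) + \mathcal{E}_\ell(\lambda)$, where $C^\ell(\lambda)$ denotes the integral in \eqref{center of mass} before passing to the limit and $\mathcal{E}_\ell(\lambda)$ collects the quadratic and higher-order remainders; here the hypothesis $m \neq 0$ is used to solve for $a^\ell$.

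The main obstacle is to show that $\mathcal{E}_\ell(\lambda) \to 0$ and, more subtly, to obtain the \emph{equivalence} of the two limits rather than merely their equality when both happen to exist. The quadratic terms in the expansion of the mean curvature and the contributions of $u_{\lambda,a}$ through the nonlinearity must be estimated so that $\mathcal{E}_\ell(\lambda)$ decays; this is precisely where the weak Regge-Teitelboim conditions enter, as they control the odd part of $g - \bar g$ whose first moments would otherwise fail to converge and would obstruct the simultaneous convergence of $a^\ell(\lambda)$ and $C^\ell(\lambda)$. The delicate point is thus to organize the expansion so that the same odd/even decomposition of $g - \bar g$ governs both sides of $m\,a^\ell = m\,C^\ell(\lambda) + \mathcal{E}_\ell(\lambda)$; granting this, $\lim a^\ell(H)$ exists if and only if $C^\ell = \lim C^\ell(\lambda)$ exists, and the two agree. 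Together with the comparison of the previous paragraph, this yields the equivalence of the limits in \eqref{center of mass} and \eqref{geometri com} and the identity $C = C_{CMC}$.
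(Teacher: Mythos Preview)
Your proposal is, in outline, exactly the Lyapunov--Schmidt argument that the paper carries out in Section~5 to prove Theorem~\ref{com thm}. The trouble is that Theorem~\ref{com thm} assumes the \emph{full} $C^2$-Regge--Teitelboim conditions \eqref{RT} (odd part $\hat g=O(|x|^{-1-\hat\tau})$) together with $R=o(|x|^{-3})$, which are strictly stronger than the \emph{weak} conditions \eqref{RT Nerz} (odd part $\hat g=O(|x|^{-1/2-\hat\tau})$) assumed in Theorem~\ref{com nerz thm}. The paper does not prove Theorem~\ref{com nerz thm} by this route; it simply cites Nerz, and in the remark immediately following Theorem~\ref{com thm} the authors state explicitly that analyzing the center of mass under the weaker assumptions of Theorem~\ref{com nerz thm} ``appears to be beyond our method's reach,'' pointing to the error control in \eqref{first bary} as the obstruction.

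Concretely, the step where you assert $\mathcal{E}_\ell(\lambda)\to 0$ is the gap. In the paper's execution (Lemmas~\ref{u odd lemma}--\ref{third der comp}) the quadratic and higher remainders are handled by replacing $\sigma$ with its even part $\tilde\sigma$ and then invoking antipodal symmetry to kill the resulting integrals; the cost of that replacement is governed by $\hat\sigma$, and the full decay $|x|^{-1-\hat\tau}$ is needed to make the errors of size $o(1)+o(\lambda\,|\xi(\lambda)|)$. With only $|x|^{-1/2-\hat\tau}$ these errors are a half-power of $\lambda$ too large and the scheme does not close. The same issue affects the improved estimate on $u^{\mathrm o}$ (Lemma~\ref{u odd lemma}) that underlies your barycenter comparison $|\Sigma(H)|^{-1}\int x^\ell\,\mathrm d\mu=a^\ell+o(1)$, and the extra hypothesis $R=o(|x|^{-3})$---absent from Theorem~\ref{com nerz thm}---is used separately to control the scalar-curvature bulk term when passing from the flux over $S(\lambda)$ to that over $S_\lambda(0)$. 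Nerz's own proof, as summarized in Appendix~\ref{other methods appendix}, avoids all of this by a genuinely different device: a continuity argument along the path of metrics $g_t=t\,g+(1-t)\,(1+m/2|x|)^4\bar g$, establishing $\tfrac{d}{dt}C_{CMC}(g_t)=C$ directly.
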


	\begin{rema} Let $(M,g)$ be a connected complete Riemannian manifold of dimension $n\geq 3$ that is $C^2$-asymptotically flat  with mass $m\neq 0$. 	If $(M,g)$  satisfies the $C^2$-Regge-Teitelboim conditions, then the Hamiltonian center of mass \eqref{center of mass} of $(M,g)$ is well-defined; see {\cite[Theorem 2.2]{Huang2}} and Proposition \ref{existence COM}.
\end{rema} 
We also mention the important contributions of  L.-H.~Huang \cite{Huang,Huang2} and  of J.~Corvino and H.~Wu \cite{CorwinoWu} in this direction that precede \cite{Nerz}. 

We survey the methods used by L.-H.~Huang, S.~Ma, and C.~Nerz in Appendix \ref{other methods appendix}.
\subsection*{Outline of the results} Our contributions in this paper are threefold. \\ \indent  First, we use the method of Lyapunov-Schmidt reduction to give a conceptually simple and relatively short proof of Theorem \ref{existence thm nerz} that, unlike the approach in \cite{Nerz}, works in all dimensions. 
\begin{thm} \label{existence in all dimensions}
	Let $(M,g)$ be a connected complete Riemannian manifold of dimension $n\geq 3$ that is $C^3$-asymptotically flat with mass $m\neq 0$. There exists $H_0>0$ and a distinguished family \begin{align} \label{cmc foliation 3} \{\Sigma(H):H\in(0,H_0)\} \end{align}
	  of constant mean curvature spheres  $\Sigma(H)\subset M$ with mean curvature $H$ that forms a foliation of the complement of a compact subset of  $M$. The spheres $\Sigma(H)$ are stable if and only if $m>0$. 
\end{thm}	
 As in the case where $n=3$, the foliation \eqref{cmc foliation 3} leads to a notion of a geometric center of mass 
$C_{CMC}=(C_{CMC}^1,\ldots,\,C_{CMC}^n)$ of $(M,g)$ with components given by 
\begin{align} \label{geometri com general} 
	C_{CMC}^\ell=\lim_{H\to0}|\Sigma(H)|^{-1}\,\int_{\Sigma{(H)}}x^\ell\,\mathrm{d}\mu
\end{align} 
provided the limits on the right-hand side of \eqref{geometri com general} exist for $\ell=1,\ldots,\,n$.  We note that, if $3\leq n\leq 7$, our proof of Theorem \ref{existence in all dimensions} also gives local uniqueness results; see Proposition \ref{local uniqueness result 1} and Proposition \ref{local uniqueness result 2}.
\\ \indent 
Second, we also obtain a new proof of Theorem \ref{com nerz thm} that works in all dimensions provided that $g$ satisfies slightly stronger asymptotic assumptions. In the case where $n=3$,  Theorem \ref{com thm} has first been proven by L.-H.~Huang in \cite[Theorem 1]{Huang}.  
\begin{thm} \label{com thm}
Let $(M,g)$ be a connected complete Riemannian manifold of dimension $n\geq 3$ that is $C^3$-asymptotically flat  with  mass $m\neq 0$ and satisfies the $C^2$-Regge-Teitelboim conditions.  Then the limits in \eqref{center of mass} and \eqref{geometri com general} exist and  $C=C_{CMC}$.
\end{thm}

\begin{rema}
	Note that the condition \eqref{RT Nerz} assumed in Theorem \ref{com nerz thm} is weaker than the condition \eqref{RT} assumed in Theorem \ref{com thm}. Analyzing the center of mass \eqref{geometri com} assuming only \eqref{RT Nerz} appears to be beyond our method's reach. In particular, our stronger assumptions are required to control the error terms in the estimate \eqref{first bary}.
\end{rema}
Third, we expand on the work  in the asymptotically Schwarzschild setting of S.~Brendle and the first-named author  \cite{BrendleEichmair} and of O.~Chodosh and the first-named author  \cite{chodoshfar} to investigate the global uniqueness of large stable constant mean curvature spheres in asymptotically flat Riemannian three-manifolds. In the case where the scalar curvature is nonnegative, this approach enables us to extend Theorem \ref{Ma thm} of S.~Ma to all decay rates $\tau\in(1/2,1]$ in the following way. 
\begin{thm}  \label{uniqueness thm}
	Let $(M,g)$ be a connected complete Riemannian three-manifold that is $C^3$-asymptotically flat of rate $\tau\in(1/2,1]$ with $R\geq 0$ and $m>0$. There exists $r>1$ such that every stable constant mean curvature sphere $\Sigma\subset M$ that encloses $B_r$  
	satisfies $\Sigma=\Sigma(H)$ for some $H\in(0,H_0)$.
\end{thm}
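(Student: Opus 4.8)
The plan is to reduce Theorem \ref{uniqueness thm} to the pinched uniqueness statement already available in Theorem \ref{existence thm nerz} by establishing a uniform a priori lower bound on $\rho(\Sigma)/\lambda(\Sigma)$ for all large stable constant mean curvature spheres that enclose a fixed large compact set. Concretely, I would first prove the following \emph{no-fast-drift} claim: there exist $\delta_0 > 0$ and a compact set $K_0 \subset M$ such that every stable constant mean curvature sphere $\Sigma \subset M$ enclosing $K_0$ satisfies $\rho(\Sigma) > \delta_0\,\lambda(\Sigma)$. Granting this, the theorem follows immediately: applying Theorem \ref{existence thm nerz} with $\delta = \delta_0$ produces a compact set $K_{\delta_0}$, and any stable constant mean curvature sphere enclosing $K = K_0 \cup K_{\delta_0}$ then satisfies the pinching hypothesis \eqref{pinching}, hence equals $\Sigma(H)$ for some $H \in (0,H_0)$. (Enclosing a large compact set forces $\lambda(\Sigma)$ large, so the hypotheses of Theorem \ref{existence thm nerz} indeed apply.) The entire difficulty is thus concentrated in the no-drift claim, which is exactly where $R \geq 0$ and $m > 0$ must enter, and which also must be made to work for all rates $\tau > 1/2$, thereby improving on the condition $\tau = 1$ of Theorem \ref{Ma thm} and dispensing with the Regge-Teitelboim conditions of the semi-global results.

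To prove the claim I would argue by contradiction, assuming stable constant mean curvature spheres $\Sigma_i$, each enclosing an exhaustion $K_i \uparrow M$, with $\lambda_i := \lambda(\Sigma_i) \to \infty$ and $\rho(\Sigma_i)/\lambda_i \to 0$; note that enclosing $K_i$ forces $\rho(\Sigma_i) \to \infty$, so the $\Sigma_i$ lie in the asymptotically flat region. Two inputs control the shape of $\Sigma_i$. First, the Christodoulou-Yau inequality, valid since $R \geq 0$, bounds the Hawking mass from below and hence controls $\int_{\Sigma_i} H(\Sigma_i)^2$, yielding $H(\Sigma_i) \sim 2/\lambda_i$ and preventing degeneration. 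Second, the stability inequality combined with the Gauss equation in the form
$$ \mathrm{Ric}(\nu,\nu) + |h|^2 = \tfrac{1}{2}\,R + \tfrac{3}{4}\,H(\Sigma)^2 + \tfrac{1}{2}\,|\hcirc|^2 - K, $$
where $K$ is the Gauss curvature of $\Sigma$, permits one to discard the non-negative term $\tfrac{1}{2}R$, invoke Gauss-Bonnet $\int_\Sigma K = 4\pi$, and thereby bound $\int_{\Sigma_i} |\hcirc|^2$. It follows that the rescaled surfaces $\lambda_i^{-1}\Sigma_i$ converge, away from the rescaled origin, to a round unit sphere in Euclidean $\mathbb{R}^3$, which passes through the origin because $\rho(\Sigma_i)/\lambda_i \to 0$. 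The obstacle is that the mass scales away under this blow-down, so compactness alone cannot detect the positive mass and yields no contradiction; an \emph{effective}, mass-sensitive estimate is required.

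The heart of the argument is therefore a quantitative centering estimate, following Brendle-Eichmair \cite{BrendleEichmair} and Chodosh-Eichmair \cite{chodoshfar}. In Euclidean space the functions $\langle \bar\nu, \partial_\ell\rangle$ lie in the kernel of the round-sphere stability operator and have vanishing average; on $\Sigma_i$ I would insert their mean-zero adjustments into the stability inequality and use the identity $|\bar\nabla \bar\nu|^2 = |h|^2$ (up to curvature corrections) to cancel the leading $|h|^2$ terms on both sides. Expanding the remaining curvature integrals with the asymptotically flat expansion of $g$, whose leading correction carries the mass $m$, one extracts an inequality whose principal term is $m$ times a quantity measuring the Euclidean off-centering of $\Sigma_i$ relative to $\lambda_i$, with a sign that, when $m > 0$, penalizes drift. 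This drives the off-centering to $o(\lambda_i)$, whence $\rho(\Sigma_i) \geq \delta_0\,\lambda_i$, contradicting $\rho(\Sigma_i)/\lambda_i \to 0$.

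I expect the main obstacle to be controlling the error terms in this effective estimate uniformly across the slow-decay regime $\tau \in (1/2,1]$: for $\tau$ near $1/2$ the metric and its derivatives decay barely fast enough, and several borderline integrals must be absorbed using the sign of the scalar curvature—through the $\tfrac{1}{2}R$ term above and the Christodoulou-Yau bound—rather than by decay alone, which is precisely what allows one to drop the assumption $\tau = 1$ and the asymptotic symmetry conditions used previously. A secondary technical point is that the $\Sigma_i$ are not a priori graphical over coordinate spheres, so the Jacobi-field test functions and the blow-down convergence must be set up without presupposing the conclusion; the Carlotto-Schoen examples of Remark \ref{CS remark} show that the hypothesis that $\Sigma$ enclose the large set $K$ cannot be removed, and it is this hypothesis that the no-drift estimate must genuinely exploit.
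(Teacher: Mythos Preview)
Your overall strategy---reduce to the pinched uniqueness in Theorem \ref{existence thm nerz} by ruling out sequences with $\rho(\Sigma_i)\to\infty$ and $\rho(\Sigma_i)/\lambda(\Sigma_i)\to 0$---matches the paper exactly, as does the blow-down picture. The gap is in how you propose to detect the mass. You suggest inserting $\bar g(\bar\nu,a)$ into the \emph{stability inequality} and reading off $m$ by ``expanding the remaining curvature integrals with the asymptotically flat expansion of $g$, whose leading correction carries the mass $m$.'' This is viable when $g$ is asymptotic to Schwarzschild, where $\operatorname{Ric}(\nu,\nu)$ carries $m$ explicitly as a coefficient. But for a general $C^2$-asymptotically flat metric of rate $\tau$, the perturbation $\sigma=g-\bar g$ is only $O(|x|^{-\tau})$ with \emph{no} distinguished leading term; the mass is visible only as the flux limit \eqref{mass}, not as a coefficient in any local curvature expansion, so integrating $\operatorname{Ric}(\nu,\nu)\,f^2$ over $\Sigma_i$ cannot by itself produce $m$. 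The papers \cite{BrendleEichmair,chodoshfar} you cite do not use stability in this way either.

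The paper instead exploits the balancing identity $0=\int_{\Sigma_i}H\,g(a,\nu)\,\mathrm{d}\mu-H(\Sigma_i)\int_{\Sigma_i}g(a,\nu)\,\mathrm{d}\mu$ coming from constancy of $H$, rewrites the first term via the first variation formula as $\int_{\Sigma_i}[\operatorname{div}a-g(D_\nu a,\nu)]\,\mathrm{d}\mu$, expands to first order in $\sigma$, and then applies an integration-by-parts identity on $\Sigma_i$ (Lemma \ref{ce int by parts new}, from \cite{chodoshfar}) together with the divergence theorem and the relation $R=\bar{\operatorname{div}}\,\bar{\operatorname{div}}\,\sigma-\bar\Delta\operatorname{tr}\sigma+O(|x|^{-2-2\tau})$ to convert everything into the ADM flux on a large coordinate sphere; there $m$ appears via Lemma \ref{mass and com convergence}, giving $0=16\,\pi\,m\,\bar g(a,\xi_i)+o(1)$ with $|\xi_i|\to 1$, the contradiction. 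You are also missing the ingredient that makes the error terms $o(1)$ for all $\tau>1/2$: the Christodoulou--Yau estimate alone yields only $\int_{\Sigma_i}|\hcirc|^2=O(1)$, whereas the paper needs the sharper $\int_{\Sigma_i}|\hcirc|^2=O(\lambda_i^{-1})$, obtained by combining Christodoulou--Yau with the Huisken--Ilmanen Hawking-mass monotonicity under $R\ge 0$ (Lemma \ref{CE estimate}); this in turn gives the pointwise bound $\hcirc=O(|x|^{-1-\tau})+O(|x|^{-1}\,\lambda_i^{-1/2})$ used both to control the $|\hbarcirc|\,|\sigma|$ error \eqref{4 5} and to upgrade the blow-down convergence to $C^1$ across the origin (Lemma \ref{distance comp}).
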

\begin{rema}\text{ }
	\begin{enumerate}[(i)]
		\item Unlike in Theorems \ref{existence thm nerz} and \ref{huang uniqueness}, no centering assumption on the surface $\Sigma$ and no asymptotic symmetries on the metric $g$ are required in Theorem \ref{uniqueness thm}.
		\item If $(M,g)$ is not flat $\mathbb{R}^3$, the assumption $m>0$ in Theorem \ref{uniqueness thm} follows from the positive mass theorem; see \cite{SchoenYau} and \cite[Theorem 6.3]{Bartnik}.
		\item 	If  $(M,g)$ contains no properly embedded totally
		geodesic flat planes along which the ambient scalar curvature vanishes, then every stable constant mean curvature sphere $\Sigma\subset M$ with  sufficiently large enclosed volume is disjoint from $B_r$; see \cite[Theorem 1.10]{CCE}.
		\item Note that the short proof of Theorem \ref{uniqueness thm} given in this paper is essentially self-contained except for the use of an estimate on the Hawking mass due to G.~Huisken and T.~Ilmanen \cite{HI} in the proof of Lemma \ref{CE estimate}.
		\item The proof of Theorem \ref{uniqueness thm} is based on curvature estimates that are tailor-made to the case where $n=3$; see Section \ref{curv est section}.
	\end{enumerate}
\end{rema}

In view of Remark \ref{CS remark}, Theorem \ref{uniqueness thm} completes the characterization of large stable constant mean curvature spheres in asymptotically flat Riemannian three-manifolds with nonnegative scalar curvature under general decay assumptions on the metric. In the case where the scalar curvature is allowed to change sign, we obtain the following improvement of the uniqueness results stated in Theorem \ref{existence thm nerz} and Theorem \ref{huang uniqueness}. 
\begin{thm}
	Let $(M,g)$ be a connected complete Riemannian three-manifold that is {$C^2$}-asymptot-\\ically flat of rate $\tau\in(1/2,1)$ with mass $m\neq 0$. Suppose that $R=O(|x|^{-5/2-\tau})$ as $x\to\infty$. \label{uniqueness thm 2}
	Let $s>1$ be such that \begin{align}  \label{s u 2} 
	s<1+\frac{3}{4}\,\frac{2\,\tau-1}{1-\tau}.
	\end{align} 
	There exists $r>1$  such that every stable constant mean curvature sphere $\Sigma\subset M$ that encloses $B_r$ with
	\begin{align} \label{weaker pinch} 
	\lambda(\Sigma)<\rho(\Sigma)^{s}
	\end{align} 	satisfies $\Sigma=\Sigma(H)$ for some $H\in(0,H_0)$.
\end{thm}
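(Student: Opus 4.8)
The plan is to argue by contradiction and to reduce the problem, via Lyapunov-Schmidt reduction, to a finite-dimensional force balance for the center of an approximately round sphere, adapting the asymptotically Schwarzschild analysis of \cite{BrendleEichmair,chodoshfar}. Suppose the assertion fails. Exhausting $M$ by compact sets and choosing for each one a counterexample that encloses it, I obtain a sequence $\{\Sigma_i\}_{i=1}^\infty$ of stable constant mean curvature spheres, none of which is a leaf of \eqref{cmc foliation 2}, with inner radii $\rho_i:=\rho(\Sigma_i)\to\infty$ and area radii $\lambda_i:=\lambda(\Sigma_i)$ subject to the pinching $\lambda_i<\rho_i^{\,s}$. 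In particular, the point of $\Sigma_i$ closest to the origin lies at distance $\rho_i\gtrsim\lambda_i^{1/s}$, so that the normalized displacement of $\Sigma_i$ is bounded away from its extreme value at a quantitative rate dictated by \eqref{weaker pinch}.

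First I would establish a priori roundness. Combining the stability of $\Sigma_i$ with the two-sided control on the Hawking mass furnished by Lemma \ref{CE estimate} — which rests on the monotonicity of the Hawking mass along inverse mean curvature flow \cite{HI} — I would bound $\int_{\Sigma_i}|\hcirc|^2\,\mathrm{d}\mu$ and the oscillation of the mean curvature, and conclude that, after rescaling by $\lambda_i^{-1}$, the surfaces converge to a unit round sphere. Thus, for $i$ large, $\Sigma_i$ is a normal graph of a small function over a coordinate sphere $S_{\lambda_i}(a_i)$ with $|a_i|=\lambda_i-\rho_i+o(\lambda_i)$. Since the sign of $R$ is unavailable here, this is the step where the decay hypothesis $R=O(|x|^{-5/2-\tau})$ first enters, controlling the error terms in the integral estimates that drive the roundness conclusion.

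With $\Sigma_i$ placed in the perturbative regime, I would carry out the Lyapunov-Schmidt reduction: inverting the mean-curvature operator on normal graphs over $S_{\lambda_i}(a_i)$ modulo the approximate kernel spanned by the coordinate translations, the constant-mean-curvature condition becomes equivalent to the vanishing of a three-dimensional reduced map $\Phi_i=(\Phi_i^1,\Phi_i^2,\Phi_i^3)$ in the variable $a_i$. The leaves of \eqref{cmc foliation 2} are precisely the zeros of $\Phi_i$ produced in the proof of Theorem \ref{existence thm nerz}, and, since $m\neq0$, the leading term of $\Phi_i$ is a nondegenerate restoring force proportional to the mass that vanishes only when $a_i$ sits at the foliation's center and is otherwise bounded below in terms of the displacement. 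The remaining contributions arise from the generic deviation of $g$ from $\bar g$, which a priori can exceed the mass term since $\tau<1$; re-expressing these contributions through divergence identities and the Gauss equation, their principal parts are recast in terms of the scalar curvature, and the hypothesis $R=O(|x|^{-5/2-\tau})$ bounds the residual. Evaluated on a sphere whose closest approach to the origin is $\rho_i\gtrsim\lambda_i^{1/s}$, this residual is dominated by the mass-generated force exactly when $s<1+\tfrac34\,\tfrac{2\tau-1}{1-\tau}$. Since $\Phi_i(a_i)=0$, the mass term must be cancelled by the residual, which is only possible when $a_i$ lies in the neighborhood of the center where the zero is unique; hence $\Sigma_i=\Sigma(H)$ for some $H\in(0,H_0)$, contradicting the choice of $\Sigma_i$.

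I expect the decisive difficulty to be precisely this last estimate. Without the Regge-Teitelboim symmetries assumed in Theorem \ref{huang uniqueness} and without the favorable sign of $R$ used in Theorem \ref{uniqueness thm}, the non-symmetric part of the metric error contributes to $\Phi_i$ at an order that, term by term, can exceed the mass term; only after the cancellations exposed by integration by parts, and after inserting $R=O(|x|^{-5/2-\tau})$, does the residual become subdominant. Carrying out this bookkeeping with the sharp powers of $\lambda_i$ and $\rho_i$ — so that the threshold \eqref{s u 2} degenerates to $s=1$ as $\tau\to1/2$ and relaxes without bound as $\tau\to1$, consistent with the fact that no pinching is needed once $R\geq0$ as in Theorem \ref{uniqueness thm} — is the technical heart of the argument.
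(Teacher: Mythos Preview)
Your overall contradiction strategy, the use of Lemma \ref{CE estimate} to get $L^2$-control on $\hcirc$, and your identification of the decisive error estimate are all correct and match the paper. However, the route you sketch through a full Lyapunov--Schmidt reduction is not what the paper does, and that detour carries a real risk in this regime.

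The issue is that once you have passed to the contradiction sequence, the uniqueness part of Theorem \ref{existence thm nerz} already forces $\rho_i=o(\lambda_i)$, so in the normalization of Proposition \ref{cmc ls prop} you are at $|\xi_i|\to 1$. Inverting the linearized mean curvature operator on graphs over $S_{\lambda_i}(a_i)$ then requires uniform control of $|h|^2+\operatorname{Ric}(\nu,\nu)$ relative to $\lambda_i^{-2}$, but near the closest-approach point $\operatorname{Ric}=O(\rho_i^{-2-\tau})$, and $\lambda_i^2\,\rho_i^{-2-\tau}$ need not be small under \eqref{weaker pinch}. The paper never attempts this inversion. Instead it works directly on $\Sigma_i$ with the identity
\[
0=\int_{\Sigma_i} H\,g(a,\nu)\,\mathrm{d}\mu-H(\Sigma_i)\int_{\Sigma_i} g(a,\nu)\,\mathrm{d}\mu,
\]
applies the first variation formula and the integration by parts of Lemma \ref{ce int by parts new}, and then --- this is the step your outline is missing --- introduces the map $E_i=\bar\nu(\Sigma_i)-\tfrac12 H(\Sigma_i)\,x+\xi_i$ and uses the pointwise estimate on $\hbarcirc$ from Remark \ref{h circ estimate bar} to bound $|\bar\nabla E_i|$ and hence $|E_i|$. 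This is what allows the flux integral on $\Sigma_i$ to be replaced by the same integrand on a centered coordinate sphere, after which the divergence theorem and \eqref{scalar curvature} convert everything to $16\pi m\,\bar g(a,\xi_i)+o(1)$ with $|\xi_i|\to 1$. The threshold \eqref{s u 2} emerges precisely from making the two errors $\int_{\Sigma_i}|\hbarcirc|\,|\sigma|\,\mathrm{d}\bar\mu$ and $\int_{\Sigma_i}|E_i|\,|\bar D\sigma|\,\mathrm{d}\bar\mu$ into $o(1)$; the paper does this by choosing an auxiliary $\tau'\in(1/2,\tau)$ so that the resulting powers of $\rho_i$ and $\lambda_i$ combine, via Lemma \ref{hy integral lemma}, into \eqref{3 s}.

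In short: drop the graph-and-invert step and compute the force balance directly on $\Sigma_i$. The Lyapunov--Schmidt picture is the right intuition (indeed, the identity above is exactly $(\bar D_aG_\lambda)|_\xi$ in disguise), but carrying out the reduction as stated would require estimates you do not have when the sphere grazes the center.
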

\begin{rema}
\text{ }
	\begin{enumerate}[(i)]
		\item The condition \eqref{weaker pinch} is weaker than \eqref{pinching}.
		\item Unlike in Theorem \ref{huang uniqueness}, we impose no further assumptions on the asymptotic symmetries of $g$ in Theorem \ref{uniqueness thm 2}.
		\item The centering assumption \eqref{weaker pinch} is less restrictive than \eqref{huang pinch} if and only if $$ \tau>\frac{\sqrt{553}-17}{12}\approx 0.55.$$
		\item The bound \eqref{s u 2} seems to be the best possible for the method developed in this paper; see \eqref{3 s}.
		\item If $m<0$, Theorem \ref{uniqueness thm 2} and Theorem \ref{existence thm nerz} imply that there are no large stable constant mean curvature spheres in $(M,g)$ that enclose $B_r$ and satisfy \eqref{weaker pinch}.
	\end{enumerate}
\end{rema}
\begin{rema}
	We comment on the regularity assumptions in Theorem \ref{existence in all dimensions} and Theorem \ref{uniqueness thm}. Let $(M,g)$ be a connected complete Riemannian manifold of dimension $n\geq 3$ that is $C^2$-asymptotically flat. 	The arguments in Section \ref{existence section} show there is $\lambda_0>1$ such that for every $\lambda>\lambda_0$ there exists a constant mean curvature sphere $\Sigma(\lambda)$ with area radius $\lambda$.  Moreover, the arguments in Section \ref{curv est section} and Section \ref{uniqueness section} show that, in the case where $n=3$ and $R\geq 0$, given $\delta>0$, there exists $r>1$ such that there is no stable constant mean curvature sphere $\Sigma\subset M$ that encloses $B_r$ with $(1-\delta)\,\lambda(\Sigma)>\rho(\Sigma)$. These results do not require that $R=o(|x|^{-n})$ as $x\to\infty$. This assumption is used only in the estimate \eqref{r estimate intro} to compute the second derivative of the reduced area function \eqref{g intro}.  By contrast, the assumption that $(M,g)$ is $C^3$-asymptotically flat is required to derive the estimate in the second and third line of \eqref{u estimate}. This estimate in turn is essential to conclude that the spheres $\Sigma(\lambda)$ are stable if and only if $m>0$ and also that the family $\{\Sigma(\lambda):\lambda>\lambda_0\}$ forms a foliation of the complement of a compact subset. 
\end{rema}
\subsection*{Outline of our arguments}To prove Theorem \ref{existence in all dimensions} and Theorem \ref{com thm}, we expand upon the method of Lyapunov-Schmidt reduction as applied in \cite{BrendleEichmair,chodoshfar,acws}. 
\\ \indent 
Let $\delta\in(0,1/2)$. We use the implicit function theorem to construct surfaces $\Sigma_{\xi,\lambda}$ as perturbations of the Euclidean coordinate spheres $$S_\lambda(\lambda\,\xi)=\{x\in\mathbb{R}^n:|x-\lambda\,\xi|=\lambda\}$$ where $\xi\in\mathbb{R}^n$ with $|\xi|<1-\delta$ and $\lambda>1$ is large   such that $\operatorname{vol}(\Sigma_{\xi,\lambda})$ does not depend on $\xi$ and such that $\Sigma_{\xi,\lambda}$ is a constant mean curvature sphere  if and only if $\xi$ is a critical point of the function $G_\lambda$ defined by
\begin{align} G_\lambda(\xi)=\lambda^{-1}\,|\Sigma_{\xi,\lambda}|. \label{g intro}
\end{align}
Using an integration by parts  inspired by the arguments in \cite[\S2.1]{chodoshfar}, we show that
\begin{align} \label{G expansion intro} 
G_\lambda(\xi)=G_\lambda(0)+\frac12\,n\,(n-1)\,\omega_n\,m\,|\xi|^2+G_{2,\lambda}(\xi)
\end{align} 
where $G_{2,\lambda}(\xi)=o(1)$ as $\lambda\to\infty$. If $m\neq 0$, it follows that $G_\lambda$ has a unique critical point $\xi(\lambda)$ with $|\xi(\lambda)|<1/2$ as $\lambda\to\infty$. This proves Theorem \ref{existence in all dimensions}. To prove Theorem \ref{com thm}, we observe that
\begin{align*}
\lambda\,\xi(\lambda)=|\Sigma(\lambda)|^{-1}\int_{\Sigma(\lambda)} x^\ell\,\mathrm{d}\mu+o(1)
\end{align*}
if $(M,g)$ satisfies the $C^2$-Regge-Teitelboim conditions where $\Sigma(\lambda)=\Sigma_{\xi(\lambda),\lambda}$. Moreover, we  show that $\lambda\,(\bar DG_{2,\lambda})|_{\xi(\lambda)}$ is essentially proportional to the Hamiltonian center of mass $C$ provided that $\lambda>1$ is sufficiently large and $(M,g)$ satisfies the $C^2$-Regge-Teitelboim conditions.   \\ \indent The proofs of Theorem \ref{uniqueness thm} and Theorem \ref{uniqueness thm 2} are based on curvature estimates and an integration by parts that have been observed and used in a related context  by O.~Chodosh and the first-named author in \cite{chodoshfar,chodosh2017global}. \\ \indent Using an estimate proven by D.~Christodoulou and S.-T.~Yau in \cite{ChristodoulouYau} together with global arguments developed by G.~Huisken and T.~Ilmanen in \cite{HI}, we obtain an improved  $L^2$-estimate for the traceless second fundamental $\hcirc(\Sigma)$ of a stable constant mean curvature sphere $\Sigma\subset M$. Such an estimate has also been used by O.~Chodosh and the first named-author in \cite{chodosh2017global} and previously in the dissertation of O.~Chodosh \cite{chodoshthesis}. 
  This allows us to prove curvature estimates for $\Sigma$ that are slightly stronger than those available in the literature. In particular, these estimates improve when the scalar curvature of $(M,g)$ is nonnegative. We then suppose, for a contradiction, that there exists a sequence $\{\Sigma_i\}_{i=1}^\infty$ of large stable constant mean curvature spheres $\Sigma_i\subset M$ that enclose the center of $(M,g)$ and do not belong to the foliation \eqref{cmc foliation 3}. In light of \eqref{G expansion intro}, we may assume that
\begin{align} \label{slow divergence intro}
\lim_{i\to\infty}\rho(\Sigma_i)=\infty \qquad\text{and}\qquad \lim_{i\to\infty}\lambda(\Sigma_i)^{-1}\,\rho(\Sigma_i)=0. 
\end{align}
Since $H=H(\Sigma_i)$ is constant, we have
\begin{align} \label{incompatible}  
0=\int_{\Sigma_i} H\,g(a,\nu)\,\mathrm{d}\mu-H(\Sigma_i)\,\int_{\Sigma_i}g(a,\nu)\,\mathrm{d}\mu 
\end{align} 
for every $a\in\mathbb{R}^3$ with $|a|=1$. Using integration by parts,  that $\Sigma_i$ encloses the center of $(M,g)$, and \eqref{slow divergence intro},  we show that the right-hand side of \eqref{incompatible} equals $4\,\pi\,m+E_i$ for some error term $E_i$ provided $a\in\mathbb{R}^3$ is chosen appropriately. If $R\geq 0,$ or, alternatively, if $R=O(|x|^{-5/2-\tau})$ as $x\to\infty$ and
$\lambda(\Sigma_i)=o(\rho(\Sigma_i)^s)$ as $i\to\infty$ where $s>1$ satisfies
\eqref{s u 2}, the curvature estimates imply that $E_i=o(1)$. This is incompatible with \eqref{incompatible} so Theorem \ref{uniqueness thm} and Theorem \ref{uniqueness thm 2} follow. 
\subsection*{Acknowledgments} 
The authors thank the anonymous referees for their helpful feedback and for encouraging them to extend Theorem \ref{existence in all dimensions} to all dimensions. 
The authors thank Lan-Hsuan Huang for kindly answering  questions related to her work.  Michael Eichmair acknowledges the support of the START Programme Y963 of the Austrian Science Fund. Thomas Koerber acknowledges the support of the Lise Meitner Programme M3184 of the Austrian Science Fund.
\section{Proof of Theorem \ref{existence in all dimensions}} \label{existence section}
The method of Lyapunov-Schmidt reduction has been used by S.~Brendle and the first-named author in \cite{BrendleEichmair} and by O.~Chodosh and the first-named author in \cite{chodoshfar} to study large  stable constant mean curvature spheres that do not enclose the center of a Riemannian three-manifold that is  asymptotic to Schwarzschild \eqref{Schwarzschild metric}. In \cite{acws}, the authors have used the method of Lyapunov-Schmidt reduction to study so-called large area-constrained Willmore spheres in asymptotically Schwarzschild manifolds. Here, we adapt this approach to study constant mean curvature spheres that enclose the center of a general asymptotically flat Riemannian manifold. \\ \indent  
Let $n\geq 3$. In this section, we assume that $g$ is a Riemannian metric on $\mathbb{R}^n$ whose scalar curvature is integrable with $R=o(|x|^{-n})$ as $x\to\infty$ and that there is $\tau\in((n-2)/2,n-2]$ with  
\begin{align} \label{section 2 decay} 
g=\bar g+\sigma \qquad \text{where} \qquad 
\partial_J \sigma=O\left(|x|^{-\tau-|J|}\right)
\end{align} 
for every multi-index $J$ with $|J|\leq 3$.
 \\ \indent 
Given $\xi\in\mathbb{R}^n$ and $\lambda>1$, we abbreviate
$${S}_{\xi,\lambda}=S_{\lambda}(\lambda\,\xi)=\{x\in\mathbb{R}^n:|x-\lambda\,\xi|=\lambda\}.$$
Given $u\in C^{\infty}(S_{\xi,\lambda})$, we define the map
\begin{align*}
\Phi^u_{\xi,\lambda}:S_{\xi,\lambda}\to \mathbb{R}^n \qquad \text{given by} \qquad  \Phi^u_{\xi,\lambda}(x)=x+u(x)\, (\lambda^{-1}\,x-\xi). 
\end{align*}
We denote by $$\Sigma_{\xi,\lambda}(u)=\Phi^u_{\xi,\lambda}(S_{\xi,\lambda})$$ the Euclidean graph of $u$ over $S_{\xi,\lambda}$. We  identify functions defined on $\Sigma_{\xi,\lambda}(u)$ with functions defined on $S_{\xi,\lambda}$ by tacit precomposition with $\Phi^u_{\xi,\lambda}$; see, e.g., Proposition \ref{cmc ls prop}.  	Moreover, we define the map $$\Theta_{\xi,\lambda}:\mathbb{R}^n\to\mathbb{R}^n\qquad \text{ given by }\qquad  \Theta_{\xi,\lambda}(y)= \lambda\,(\xi+y).$$ Note that $\Theta_{\xi,\lambda}(S_1(0))=S_{\xi,\lambda}$.
\\ \indent Let $\delta\in(0,1/2)$. In the  statement of  Proposition \ref{cmc ls prop}, we use $\Lambda_0(S_{\xi,\lambda})$ and $\Lambda_1(S_{\xi,\lambda})$ to denote the constant functions and the first spherical harmonics viewed as subspaces of $C^{\infty}(S_{\xi,\lambda})$, respectively. We use  $\perp$ to denote the orthogonal complements of these spaces in  $C^{\infty}(S_{\xi,\lambda})$ with respect to the Euclidean $L^2$-inner product. In the estimate \eqref{u estimate}, $\bar D$, the dash, and $\bar\nabla$ denote differentiation with respect to $\xi\in\mathbb{R}^n$, $\lambda>1$, and $x\in S_{\xi,\lambda}$, respectively. 
\begin{prop} \label{cmc ls prop}
	There are constants $\lambda_0>1$ and $\varepsilon>0$ depending on $g$ and $\delta\in(0,1/2)$ such that for every $\xi\in\mathbb{R}^n$ with $|\xi|<1-\delta$ and  $\lambda>\lambda_0$ there exists a function $v_{\xi,\lambda}\in C^{\infty}(S_1(0))$ with the following properties. 
		There holds $v_{\xi,\lambda}\perp \Lambda_1(S_{1}(0))$ and, as $\lambda\to\infty$, 
	\begin{equation} \label{u estimate} 
	\begin{aligned} 
	|v_{\xi,\lambda}|+|\bar\nabla v_{\xi,\lambda}|+|\bar\nabla^2 v_{\xi,\lambda}|=&\,O(\lambda^{-\tau}),\\ 
|\bar Dv_{\xi,\lambda}|+|\bar D\bar\nabla v_{\xi,\lambda}|+|\bar D\nabla ^2v_{\xi,\lambda}|=&\,O(\lambda^{-\tau}), \\
	|v_{\xi,\lambda}'|+|\bar\nabla v_{\xi,\lambda}'|+|\bar\nabla^2 v_{\xi,\lambda}'|=&\,O(\lambda^{-1-\tau})
	\end{aligned}
	\end{equation}  
	 uniformly for all $\xi\in\mathbb{R}^n$ with $|\xi|<1-\delta$. Let $u_{\xi,\lambda}\in C^\infty(S_{\xi,\lambda})$ be given by $$u_{\xi,\lambda}(x)=v_{\xi,\lambda}(\lambda^{-1}x-\xi).$$ The surface
	$$
	\Sigma_{\xi,\lambda}=\Sigma_{\xi,\lambda}(u_{\xi,\lambda})
	$$
satisfies
	\begin{itemize}
		\item[$\circ$] $H\in \Lambda_0(S_{\xi,\lambda})\oplus\Lambda_1(S_{\xi,\lambda})$ and
		\item[$\circ$] $\operatorname{vol}(S_{2\,\lambda}(0))-\operatorname{vol}(\Sigma_{\xi,\lambda})=7\,\omega_n\,\lambda^n$. 
	\end{itemize}
	Moreover, if $\Sigma_{\xi,\lambda}(u)$ with $u\perp \Lambda_1(S_{\xi,\lambda})$ is such that 
	\begin{itemize}
		\item[$\circ$] $H\in \Lambda_0(S_{\xi,\lambda})\oplus\Lambda_1(S_{\xi,\lambda})$,
		\item[$\circ$] $\operatorname{vol}(S_{2\,\lambda}(0))-\operatorname{vol}(\Sigma_{\xi,\lambda}(u))=7\,\omega_n\,\lambda^n$, 
		\item[$\circ$]
		$
		\lambda^{-1}\,|u|+|\bar\nabla u|+\lambda\,|\bar\nabla^2 u|<\varepsilon,
		$
	\end{itemize}
	then $u=u_{\xi,\lambda}$.
\end{prop}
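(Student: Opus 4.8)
The plan is to set up a Lyapunov-Schmidt reduction. I want to solve the equation that the mean curvature $H$ of the graph $\Sigma_{\xi,\lambda}(u)$ lie in the low-frequency space $\Lambda_0 \oplus \Lambda_1$, subject to the volume-normalization constraint and the orthogonality condition $u \perp \Lambda_1(S_{\xi,\lambda})$. The idea is that the mean curvature operator, linearized at $u=0$ around the (approximate) coordinate sphere $S_{\xi,\lambda}$, is governed by the Jacobi operator of a Euclidean sphere, whose kernel is exactly $\Lambda_0 \oplus \Lambda_1$. Projecting the CMC equation onto the high-frequency space $(\Lambda_0 \oplus \Lambda_1)^\perp$ leaves an equation whose linearization is invertible with controlled inverse, so the implicit function theorem (or Banach fixed point) produces a unique solution $u_{\xi,\lambda}$ in that complement.

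**First I would** make the reduction precise. Write $H = H(\Sigma_{\xi,\lambda}(u))$ as a function on $S_{\xi,\lambda}$ and consider the projection $\pi^\perp$ onto $(\Lambda_0 \oplus \Lambda_1)^\perp$. The condition ``$H \in \Lambda_0 \oplus \Lambda_1$'' is equivalent to $\pi^\perp H = 0$. Define a map $\mathcal{F}(\xi, \lambda, u) = \pi^\perp H(\Sigma_{\xi,\lambda}(u))$, where $u$ ranges over the slice of $(\Lambda_0\oplus\Lambda_1)^\perp$ cut out by the volume constraint $\operatorname{vol}(S_{2\lambda}(0)) - \operatorname{vol}(\Sigma_{\xi,\lambda}(u)) = \tfrac{28\pi}{3}\lambda^3$; since the volume constraint involves only the $\Lambda_0$ (average) part of $u$, imposing it together with $u\perp\Lambda_1$ fixes the low-frequency components and lets $u$ vary freely in high frequencies. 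The key analytic input is that the linearization $D_u \mathcal{F}$ at the Euclidean sphere is, after rescaling, a small perturbation of the restriction to $(\Lambda_0\oplus\Lambda_1)^\perp$ of the operator $\lambda^{-2}(\bar\Delta + 2\lambda^{-2})$, which is invertible on that complement with operator norm bounds uniform in $\xi$ for $|\xi|<1-\delta$. The decay hypothesis \eqref{section 2 decay} guarantees that the deviation of $g$ from $\bar g$ contributes only lower-order error terms to both $\mathcal F$ and its linearization.

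**The main obstacle will be** establishing the uniform estimates \eqref{u estimate} and the uniform invertibility of the linearized operator for all $\xi$ in the ball $|\xi|<1-\delta$ simultaneously. The difficulty is that the centers $\lambda\xi$ translate deep into the asymptotic region, so one must track how the metric error $\sigma$ interacts with spheres whose centers are themselves escaping to infinity; the bounds must be uniform in $\xi$ and capture the precise orders $o(\lambda^{1/2})$ for $u$ and its derivatives in $\xi$ and $\lambda$. I would obtain these by working in rescaled coordinates $y = \lambda^{-1}(x - \lambda\xi)$, in which $S_{\xi,\lambda}$ becomes the unit sphere and the rescaled metric converges to $\bar g$ in $C^2$ with the error controlled by \eqref{section 2 decay}; Schauder or $L^2$-based elliptic estimates on the fixed unit sphere then give the required bounds, and differentiating the defining equation $\mathcal F = 0$ in $\xi$ and $\lambda$ (using the uniform invertibility) yields the estimates on $\bar D u$ and $u'$. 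The computation $\operatorname{vol}(S_{2\lambda}(0)) - \operatorname{vol}(\Sigma_{\xi,\lambda}) = \tfrac{28\pi}{3}\lambda^3$ should follow from the leading-order volume of a coordinate sphere of radius $\lambda$ together with the fact that the volume correction from $u$ and from $\sigma$ is lower order, and is what the normalization is arranged to produce.

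**Finally**, for the uniqueness clause, I would argue that any competitor $u$ satisfying the three bulleted conditions with the smallness $|u| + \lambda|\bar\nabla u| + \lambda^2 |\bar\nabla^2 u| < \epsilon\lambda$ lies in the domain on which $\mathcal F$ is defined and where the contraction/implicit-function argument applies; since that argument produces a \emph{unique} zero of $\pi^\perp H$ in the prescribed slice within the $\epsilon\lambda$-neighborhood, one concludes $u = u_{\xi,\lambda}$. This amounts to checking that the smallness threshold $\epsilon$ can be taken below the radius of the neighborhood on which the linearized operator is invertible and the fixed-point map is a contraction.
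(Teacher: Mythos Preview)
Your proposal is correct and follows precisely the approach the paper has in mind: the paper's proof consists of the single sentence ``This follows from the implicit function theorem and scaling; see e.g.~[Proposition 17]{acws},'' and what you have written is exactly an outline of that standard Lyapunov--Schmidt argument (rescale to the unit sphere, project the mean curvature equation onto $(\Lambda_0\oplus\Lambda_1)^\perp$, invoke uniform invertibility of the linearized operator there, apply the implicit function theorem, and differentiate the defining equation for the parameter estimates).
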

\begin{proof}
	This is similar to, e.g.,~\cite[Proposition 4]{BrendleEichmair}. We include a proof for the reader's convenience.\\ \indent 	
	Let $\alpha>0$ and $\mathcal{G}$ be the space of Riemannian metrics on $U=\{y\in\mathbb{R}^n:1-\delta/2<|y|<2\}$ equipped with the $C^{1,\alpha}$-topology.  The rescaled metric
	$		g_{\xi,\lambda}=\lambda^{-2}\,\Theta_{\xi,\lambda}^*\,g$ 	satisfies
	$$
	||g_{\xi,\lambda}-\bar{g}||_{\mathcal{G}}=O(\lambda^{-\tau}\,|1-|\xi||^{-\tau})=O(\lambda^{-\tau}).
	$$
	Moreover, we have
	$$
	||\bar D g_{\xi,\lambda}||_{C^{1,\alpha}(U)}=O(\lambda^{-\tau})
	$$
	and 
	$$
	||g_{\xi,\lambda}'||_{C^{1,\alpha}(U)}=O(\lambda^{-1-\tau}).
	$$
	\indent Let $k\geq 0$ be an integer. Let  $\Lambda_{0,k}(S_1(0))$ and $\Lambda_{1,k}(S_1(0))$ be the constants and first spherical harmonics viewed as subspaces of $C^{k,\alpha}(S_1(0))$, respectively. We define the smooth map 
	\[
	T:\Lambda_{1,2}(S_1(0))^\perp\times \mathcal{G}\to[\Lambda_{0,0}(S_1(0))\oplus\Lambda_{1,0}(S_1(0))]^\perp \times \mathbb{R}
	\]
	by 
	\[
	T(v,\,g)=\left(\operatorname{proj}_{[\Lambda_{0,0}(S_1(0))\oplus\Lambda_{1,0}(S_1(0))]^\perp} H,\,\operatorname{vol}(S_{2}(0))-\operatorname{vol}(\Sigma_{0,1}(u))\right)
	\]
		where the mean curvature is computed with respect to $\Sigma_{0,1}(v)$ and $g$ and the  volume is computed with respect to  $g$. Using \eqref{mean curvature change}, we see that
		$$
		(D T)|_{(0,\,\bar g)}(v,\,0)=\left(\operatorname{proj}_{[\Lambda_{0,0}(S_1(0))\oplus\Lambda_{1,0}(S_1(0))]^\perp}(-\bar \Delta v-(n-1)\, v),\,-n\,\omega_n\,\operatorname{proj}_{\Lambda_0(S_1(0))} v\right).
		$$
	By Lemma \ref{spherical harmonics lemma 1},	the kernel of the operator
		$$
		-\bar\Delta-(n-1):C^{2,\alpha}(S_1(0))\to C^{0,\alpha}(S_1(0)) 
		$$
		is given by $\Lambda_{1,2}(S_1(0))$. It follows that $$(D T)|_{(0,\,\bar g)}(\,\cdot\,,\,0):\Lambda_{1,2}(S_1(0))^\perp\to [\Lambda_{0,0}(S_1(0))\oplus\Lambda_{1,0}(S_1(0))]^\perp \times \mathbb{R}$$ 
		is an isomorphism.   The assertions follow from this and the implicit function theorem.
\end{proof} 
To capture the variational nature of the constant mean curvature equation on the family of surfaces $\{\Sigma_{\xi,\lambda}:|\xi|<1-\delta\}$ from Proposition \ref{cmc ls prop}, we consider the reduced area function $$G_\lambda:\{\xi\in\mathbb{R}^n:|\xi|<1-\delta\}\to\mathbb{R} \qquad\text{given by} 
\qquad G_\lambda(\xi)=\lambda^{-1}\,|\Sigma_{\xi,\lambda}|.
$$
\begin{lem} \label{variation}
		Let $\delta\in(0,1/2)$, $\lambda>\lambda_0$, and $\xi\in\mathbb{R}^n$ with $|\xi|<1-\delta$. Given  $a\in \mathbb{R}^n$ with $|a|=1$, let $f\in C^\infty(\Sigma_{\xi,\lambda})$ be the normal speed  of the variation  $\{\Sigma_{\xi+s\,a,\lambda}:s\in(-\varepsilon,\varepsilon)\}$ at $s=0$. There holds  
		$$
	f=	\lambda\,g(\nu, a)+g(\nu,{\Theta_{\xi,\lambda}}_*\bar D_a v_{\xi,\lambda}).
		$$
\end{lem}
\begin{proof}
Note that the map $$\Sigma_{\xi,\lambda}\times(-\varepsilon,\varepsilon)\to \mathbb{R}^3\qquad\text{given by}\qquad 
	(x,s)\mapsto x+s\,\lambda\,a+v_{\xi+s\,a,\lambda}(\lambda^{-1}\,x-\xi)\,(\lambda^{-1}\,x-\xi)
	$$ 
	parametrizes the family 	$
	\{\Sigma_{\xi+s\,a,\lambda}:s\in(-\varepsilon,\varepsilon)\}.
	$
The assertion follows.
\end{proof}
\begin{lem}
	Given $\delta\in(0,1/2)$,	there is $\lambda_0>1$ such that for every \label{variational to 3d} $\lambda>\lambda_0$ and $\xi\in\mathbb{R}^n$ with $|\xi|<1-\delta$ the following holds. The sphere $\Sigma_{\xi,\lambda}$ has  constant mean curvature if and only if $\xi$ is a  critical point of $G_\lambda$. 
\end{lem}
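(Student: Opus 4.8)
The plan is to establish the equivalence between the constant mean curvature condition and the criticality of $G_\lambda$ by exploiting the variational structure built into the construction in Proposition \ref{cmc ls prop}. The key point is that the surfaces $\Sigma_{\xi,\lambda}$ are constrained to have fixed enclosed volume (the second bullet), so that varying $\xi$ produces a volume-preserving deformation. First I would compute the first variation of $G_\lambda$ with respect to $\xi$. By the first variation of area formula, for any variation vector field $X$ along $\Sigma_{\xi,\lambda}$ with normal component $\langle X,\nu\rangle = \varphi$, we have $\bar D_\xi |\Sigma_{\xi,\lambda}| = -\int_{\Sigma_{\xi,\lambda}} H\,\varphi\,\mathrm{d}\mu$, where the deformation arises from differentiating the family in $\xi$. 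The volume constraint $\operatorname{vol}(S_{2\lambda}(0)) - \operatorname{vol}(\Sigma_{\xi,\lambda}) = \tfrac{28\pi}{3}\lambda^3$ being independent of $\xi$ forces $\int_{\Sigma_{\xi,\lambda}} \varphi\,\mathrm{d}\mu = 0$ for each of the three coordinate directions of $\xi$.

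Next I would combine this with the structural property $H \in \Lambda_0(S_{\xi,\lambda}) \oplus \Lambda_1(S_{\xi,\lambda})$ guaranteed by the proposition. Writing $H = h_0 + h_1$ with $h_0$ constant and $h_1$ a first spherical harmonic, the sphere has constant mean curvature precisely when $h_1 = 0$. The goal is therefore to show that $(\bar D G_\lambda)|_\xi = 0$ if and only if $h_1 = 0$. The strategy is to verify that the three normal speeds $\varphi_k$ associated with the three directions $\partial_{\xi^k}$, after projecting off the constant part enforced by the volume constraint, span (to leading order, for large $\lambda$) the space $\Lambda_1$ of first spherical harmonics. Since $\int_{\Sigma_{\xi,\lambda}} H\,\varphi_k\,\mathrm{d}\mu = \int h_1\,\varphi_k\,\mathrm{d}\mu$ once the constant part of $H$ integrates against $\varphi_k$ to zero (using the volume constraint), criticality of $G_\lambda$ is equivalent to $h_1$ being $L^2$-orthogonal to each $\varphi_k$, which for large $\lambda$ forces $h_1 = 0$.

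The main obstacle will be controlling the deformation fields $\varphi_k$ precisely enough to conclude that their span captures all of $\Lambda_1$ and not a proper subspace. The unperturbed deformation $\partial_{\xi^k} S_{\lambda}(\lambda\xi) = \lambda\,\partial_{\xi^k}$ is the constant translation $\lambda e_k$, whose normal speed on the round sphere of radius $\lambda$ is exactly $\lambda\,\langle e_k,\bar\nu\rangle$, which is a first spherical harmonic spanning a one-dimensional piece of $\Lambda_1$ for each $k$; together the three of them span all of $\Lambda_1$. The subtlety is that the actual deformation also moves the graph function $u_{\xi,\lambda}$, contributing the term $(\bar D u)|_{(\xi,\lambda)}$, which by the estimate \eqref{u estimate} is $o(\lambda^{1/2})$ and hence negligible compared with the leading term of size $\lambda$. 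I would make this rigorous by expanding the pairing matrix $M_{k\ell} = \lambda^{-1}\int_{\Sigma_{\xi,\lambda}} \varphi_k\,\varphi_\ell^{(1)}\,\mathrm{d}\mu$, where $\varphi_\ell^{(1)}$ denotes the $\Lambda_1$-components, and checking that it is invertible for $\lambda > \lambda_0$ because it converges to a nondegenerate multiple of the identity as $\lambda \to \infty$.

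Finally, I would assemble these pieces: the first-variation identity reduces criticality to the vanishing of a linear functional determined by $h_1$, the volume constraint removes the contribution of the constant part of $H$, and the nondegeneracy of the pairing matrix upgrades ``$h_1$ orthogonal to each $\varphi_k$'' to ``$h_1 = 0$'', i.e.\ to the constant mean curvature condition. Shrinking $\lambda_0$ if necessary to absorb the $o(\lambda^{1/2})$ error terms from \eqref{u estimate} into the dominant $\lambda$-order behavior completes the proof. This kind of reduction is standard in Lyapunov-Schmidt arguments and is closely analogous to the computation in \cite[Proposition 17]{acws} invoked in the proof of Proposition \ref{cmc ls prop}, so I expect the remaining calculations to be routine once the nondegeneracy of the leading-order pairing is established.
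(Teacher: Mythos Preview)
Your argument is correct and is exactly the standard Lyapunov--Schmidt reduction that the paper defers to via the citation \cite[Lemma~21]{acws}: the volume constraint kills the $\Lambda_0$-part of $H$, the leading normal speeds $\lambda\,\bar g(e_k,\bar\nu)$ span $\Lambda_1$, and the $o(\lambda^{1/2})$ estimates \eqref{u estimate} make the pairing matrix a perturbation of a nondegenerate diagonal. Two cosmetic points: with the paper's convention (Lemma~\ref{variation lemma}) the first variation of area is $+\int_\Sigma H\,\varphi\,\mathrm{d}\mu$, not $-\int_\Sigma H\,\varphi\,\mathrm{d}\mu$, and the reference for this lemma is \cite[Lemma~21]{acws} rather than Proposition~17, which handles the implicit function theorem step behind Proposition~\ref{cmc ls prop}.
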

\begin{proof}
	This is similar to, e.g., \cite[Lemma 21]{acws}. We include a proof for the reader's convenience. \\ \indent 
	Let $a\in\mathbb{R}^n$ with $|a|=1$ and  $f$ the normal speed of the  variation
	$
	\{\Sigma_{\xi+s\,a,\lambda}:s\in(-\varepsilon,\varepsilon)\}.
	$
	Since the variation is volume preserving,
	$$
	\int_{\Sigma_{\xi,\lambda}} f\,\text{d}\mu=0.
	$$
	\indent Assume that $\xi$ is a critical point of $G_\lambda$. We find
	$$
	0=\lambda^{-1}\,	\int_{\Sigma_{\xi,\lambda}} H(\Sigma_{\xi,\lambda}) \,f\,\text{d}\mu.
	$$
	In conjunction with Lemma \ref{variation}, \eqref{section 2 decay}, \eqref{u estimate},  and Lemma \ref{perturbations},
	$$
	0=\int_{S_{\xi,\lambda}} (H(\Sigma_{\xi,\lambda})-\operatorname{proj}_{\Lambda_0(S_{\xi,\lambda})}H(\Sigma_{\xi,\lambda})) \,(\bar g(a,\bar \nu)+o(1))\,\text{d}\bar \mu.
	$$
	Recall that $H(\Sigma_{\xi,\lambda})\in \Lambda_0(S_{\xi,\lambda})\oplus \Lambda_1(S_{\xi,\lambda}) $. Varying the direction of translation $a$, it follows that $H(\Sigma_{\xi,\lambda})$ is constant provided that $\lambda$ is sufficiently large.\\ \indent  
	Conversely, if $\Sigma_{\xi,\lambda}$ is a constant mean curvature sphere, then
	$$
	\int_{\Sigma_{\xi,\lambda}} H\,f\,\text{d}\mu=H(\Sigma_{\xi,\lambda})\,\int_{\Sigma_{\xi,\lambda}} f\,\text{d}\mu=0.
	$$
	In particular, $\xi$ is a critical point of $G_\lambda$. 
\end{proof}
In the following two lemmas, we compute the asymptotic expansion of $G_\lambda$ as $\lambda\to\infty$. Here and below, we say that an error term $\mathcal{E}=O(\lambda^{-\varepsilon})$, $\varepsilon>0$, may be differentiated with respect to $\xi$ if $\bar D\mathcal{E}=O(\lambda^{-\varepsilon})$. 
\begin{lem}
	Let $a\in\mathbb{R}^n$ with $|a|=1$.
	There holds, \label{G expansion} as $\lambda\to\infty$,
	$$
	 (\bar D_aG_\lambda)|_\xi=\frac12\, \int_{S_{\xi,\lambda}}\bar D_a\bar{\operatorname{tr}}\,\sigma-(\bar D_a\sigma)(\bar\nu,\bar\nu)-(n-1)\,\lambda^{-1}\,\bar{\operatorname{tr}}\,\sigma\,\bar g(a,\bar\nu)\,\mathrm{d}\bar\mu+o(1)
	$$
 uniformly for all $\xi\in\mathbb{R}^n$ with $|\xi|<1-\delta$. This estimate may be differentiated once with respect to $\xi$. 
\end{lem}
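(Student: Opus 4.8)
The plan is to compute the first derivative $(\bar D_a G_\lambda)|_\xi$ by differentiating the reduced area $G_\lambda(\xi)=\lambda^{-1}|\Sigma_{\xi,\lambda}|$ directly and extracting the leading-order contribution from the metric perturbation $\sigma$. Since $G_\lambda(\xi)=\lambda^{-1}\int_{S_{\xi,\lambda}}\mathrm{d}\mu$, where the area element $\mathrm{d}\mu$ is computed with respect to $g=\bar g+\sigma$ on the graph $\Sigma_{\xi,\lambda}=\Sigma_{\xi,\lambda}(u_{\xi,\lambda})$, I would first record the standard expansion of the $g$-area element of a graph over $S_{\xi,\lambda}$ in terms of the Euclidean area element $\mathrm{d}\bar\mu$, the perturbation $\sigma$, and the graph function $u_{\xi,\lambda}$ together with its derivatives. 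The key structural input is that the full $\xi$-derivative decomposes into an explicit piece coming from the translation of the coordinate sphere $S_{\xi,\lambda}=S_\lambda(\lambda\xi)$ and a piece coming from the $\xi$-dependence of the optimal graph function $u_{\xi,\lambda}$.

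The central simplification is that the latter contribution is negligible. Because $\Sigma_{\xi,\lambda}$ is constructed in Proposition \ref{cmc ls prop} to have its volume independent of $\xi$ (the constraint $\operatorname{vol}(S_{2\lambda}(0))-\operatorname{vol}(\Sigma_{\xi,\lambda})=\frac{28\pi}{3}\lambda^3$ holds for all $\xi$), the first variation of area in the normal direction $\bar D_a u_{\xi,\lambda}$ pairs against the mean curvature, which by construction lies in $\Lambda_0\oplus\Lambda_1$. I would exploit this orthogonality together with the estimates \eqref{u estimate}, namely $(\bar D u)|_{(\xi,\lambda)}=o(\lambda^{1/2})$ and the pointwise bounds on $u_{\xi,\lambda}$ and its spatial derivatives, to show that all terms involving $\bar D_a u_{\xi,\lambda}$ and products of $\sigma$ with $u_{\xi,\lambda}$ are of order $o(1)$ after multiplication by the prefactor $\lambda^{-1}$ and integration over a surface of area $\sim\lambda^2$. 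What survives is precisely the derivative of the linearized area functional $\frac12\int_{S_{\xi,\lambda}}[\operatorname{tr}\sigma-\sigma(\bar\nu,\bar\nu)]\,\mathrm{d}\bar\mu$ with respect to $\xi$ in the direction $a$.

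Carrying out the $\bar D_a$ differentiation of this leading term requires care because $a$ acts both on the integrand, producing $\bar D_a\operatorname{tr}\sigma$ and $(\bar D_a\sigma)(\bar\nu,\bar\nu)$, and on the domain of integration and the unit normal $\bar\nu$, since translating the center $\lambda\xi$ in the direction $\lambda a$ moves the sphere. I would parametrize $S_{\xi,\lambda}$ over the fixed unit sphere via $x=\lambda\xi+\lambda\omega$ with $\omega\in S_1(0)$, so that $\bar\nu=\omega$ is independent of $\xi$ and the area element factors as $\lambda^2\mathrm{d}\bar\mu_{S_1}$; then $\bar D_a$ acts only through the argument $x=\lambda\xi+\lambda\omega$, giving $\bar D_a[(\operatorname{tr}\sigma-\sigma(\bar\nu,\bar\nu))(x)]=\lambda[(\bar D_a\operatorname{tr}\sigma)-(\bar D_a\sigma)(\bar\nu,\bar\nu)]$, which upon returning to the $\mathrm{d}\bar\mu$ measure yields the first two terms of the claimed formula. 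The extra term $-2\lambda^{-1}\operatorname{tr}\sigma\,\bar g(a,\bar\nu)$ I expect to arise from the $\xi$-dependence of how the integrand is weighted once one reconciles the translated parametrization with the derivative of $\bar\nu$ in the original chart, equivalently from the first variation of the area element under the translation field $\lambda a$, whose divergence along $S_{\xi,\lambda}$ contributes the factor $2\lambda^{-1}\bar g(a,\bar\nu)$.

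The main obstacle will be bookkeeping the error terms to confirm they are genuinely $o(1)$ uniformly in $\xi$ for $|\xi|<1-\delta$: this means tracking every term quadratic in $\sigma$, every term coupling $\sigma$ to $u_{\xi,\lambda}$, and every term involving second derivatives of $u_{\xi,\lambda}$, and verifying that the decay \eqref{section 2 decay} of $\sigma$ as $|x|\sim\lambda\to\infty$, combined with the graph estimates \eqref{u estimate}, beats the $\lambda^{-1}\cdot\lambda^2=\lambda$ growth from the area. The borderline case is the rate $\tau$ close to $1/2$, where $\sigma=O(\lambda^{-\tau})$ gives $\lambda^{-1}\int_{S_{\xi,\lambda}}|\sigma|^2\,\mathrm{d}\bar\mu=O(\lambda^{1-2\tau})=o(1)$ only because $\tau>1/2$; I would isolate this check and the analogous one for the $\bar D_a u_{\xi,\lambda}$ contributions, relying on the orthogonality to $\Lambda_1$ and the volume constraint to cancel the would-be leading terms that are not manifestly small.
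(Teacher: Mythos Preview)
Your overall strategy---compute the first variation of area, split it into the translation piece and the $\bar D_a u_{\xi,\lambda}$ piece, and expand---is the same as the paper's. But there is a genuine gap in how you handle the $\bar D_a u_{\xi,\lambda}$ contribution and, correspondingly, in where you locate the third term of the formula.

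You claim that all terms involving $\bar D_a u_{\xi,\lambda}$ are $o(1)$, relying on ``orthogonality'' between $H\in\Lambda_0\oplus\Lambda_1$ and $\bar D_a u_{\xi,\lambda}$. This orthogonality does not hold: the condition $u_{\xi,\lambda}\perp\Lambda_1$ says nothing about $\bar D_a u_{\xi,\lambda}$, and $u_{\xi,\lambda}$ is not orthogonal to $\Lambda_0$ either. In fact the contribution $\lambda^{-1}\int H\,(\bar D_a u)\,g(\bar\nu,\nu)\,\mathrm{d}\mu$ is \emph{not} $o(1)$. Its leading part is $2\lambda^{-2}\int(\bar D_a u)\,\mathrm{d}\bar\mu$, and the volume constraint $\bar D_a\operatorname{vol}(\Sigma_{\xi,\lambda})=0$ gives $\int(\bar D_a u)\,g(\bar\nu,\nu)\,\mathrm{d}\mu=-\lambda\int g(a,\nu)\,\mathrm{d}\mu$, so this piece equals $-2\lambda^{-1}\int g(a,\nu)\,\mathrm{d}\mu+o(1)$. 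Expanding $g(a,\nu)\,\mathrm{d}\mu$ and using $\int_{S_{\xi,\lambda}}\bar g(a,\bar\nu)\,\mathrm{d}\bar\mu=0$ produces exactly $-\lambda^{-1}\int_{S_{\xi,\lambda}}\operatorname{tr}\sigma\,\bar g(a,\bar\nu)\,\mathrm{d}\bar\mu$, which is the missing third term. Your alternative explanation---that the third term arises from the $\xi$-dependence of $\bar\nu$ or of the Euclidean area element under translation---cannot be right: in the parametrization $x=\lambda\xi+\lambda\omega$ both $\bar\nu=\omega$ and $\lambda^2\,\mathrm{d}\bar\mu_{S_1}$ are independent of $\xi$, so differentiating the linearized area functional yields only the first two terms, exactly as you computed.

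The paper avoids this trap by subtracting the volume constraint \emph{before} estimating: since $\int g(a+\lambda^{-1}(\bar D_a u)\bar\nu,\nu)\,\mathrm{d}\mu=0$, one may write $(\bar D_aG_\lambda)|_\xi=\int_{\Sigma_{\xi,\lambda}}[H-2\lambda^{-1}]\,g(a+\lambda^{-1}(\bar D_a u)\bar\nu,\nu)\,\mathrm{d}\mu$. Now $H-2\lambda^{-1}=o(\lambda^{-3/2})$, so the $\bar D_a u$ piece really is $o(1)$ after this subtraction, and what remains is $\int_{\Sigma_{\xi,\lambda}}[H-2\lambda^{-1}]\,g(a,\nu)\,\mathrm{d}\mu$. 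The first variation formula converts the $H$ part to $\int[\operatorname{div}a-g(D_\nu a,\nu)]\,\mathrm{d}\mu$, one passes from $\Sigma_{\xi,\lambda}$ to $S_{\xi,\lambda}$ using \eqref{u estimate}, and Lemma~\ref{perturbations} gives the three claimed terms---the $-2\lambda^{-1}g(a,\nu)$ piece supplying the third one.
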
 
\begin{proof}
Let $f$ be	the normal speed of the variation
	$
	\{\Sigma_{\xi+s\,a,\lambda}:s\in(-\varepsilon,\varepsilon)\}
	$.	
	Using that $\operatorname{vol}(\Sigma_{\xi,\lambda})$ does not depend on $\xi\in\mathbb{R}^n$, we obtain
	\begin{align} \label{-1 0}
	(\bar D_{a}G_\lambda)|_\xi=	\int_{\Sigma_{\xi,\lambda}}[H-(n-1)\,\lambda^{-1}]\,f\,\mathrm{d}\mu.
\end{align}
\indent The estimates below may all be differentiated once with respect to $\xi$. 
Using \eqref{mean curvature change}, \eqref{u estimate}, and Lemma \ref{perturbations}, we obtain
\begin{align} \label{MC expansion} 
H(\Sigma_{\xi,\lambda})=H(S_{\xi,\lambda})+O(\lambda^{-1-\tau})=(n-1)\,\lambda^{-1}+O(\lambda^{-1-\tau}).
\end{align}
In conjunction with Lemma \ref{variation}, \eqref{u estimate}, and \eqref{-1 0}, we find
\begin{align}  \label{-1 1}
(\bar D_{a}G_\lambda)|_\xi=	\int_{\Sigma_{\xi,\lambda}}(H-(n-1)\,\lambda^{-1})\,g(a,\nu)\,\mathrm{d}\mu+o(1).
\end{align} 
 By the first variation formula, 
\begin{equation}  \label{-1 2}
\begin{aligned} 
\int_{\Sigma_{\xi,\lambda}}(H-(n-1)\,\lambda^{-1})\,g(a,\nu)\,\mathrm{d}\mu=\,\int_{\Sigma_{\xi,\lambda}}\operatorname{div}a-g(D_\nu a,\nu)-(n-1)\,\lambda^{-1}\,g(a,\nu)\,\mathrm{d}\mu.
\end{aligned} 
\end{equation} 
 Using \eqref{section 2 decay} and \eqref{u estimate}, we have
$$
(\operatorname{div}a)\circ\Phi^{u_{\xi,\lambda}}_{\xi,\lambda}=\operatorname{div}a+O(\lambda^{-2-\tau}\,|u_{\xi,\lambda}|)=\operatorname{div}a+O(\lambda^{-1-2\,\tau})
$$
and
$$
(\Phi^{u_{\xi,\lambda}}_{\xi,\lambda})^*\mathrm{d}\mu(\Sigma_{\xi,\lambda})=\mathrm{d}\mu(S_{\xi,\lambda})+O(|\bar \nabla u_{\xi,\lambda}|)=\mathrm{d}\mu(S_{\xi,\lambda})+O(\lambda^{-\tau}).
$$
It follows that 
$$
\int_{\Sigma_{\xi,\lambda}}\operatorname{div}a\,\mathrm{d}\mu=\int_{S_{\xi,\lambda}}\operatorname{div}a\,\mathrm{d}\mu+o(1).
$$
Likewise, 
$$
\int_{\Sigma_{\xi,\lambda}}g(D_\nu a,\nu)\,\mathrm{d}\mu=\int_{S_{\xi,\lambda}}g(D_\nu a,\nu)\,\mathrm{d}\mu+o(1)$$
and
$$ 
\int_{\Sigma_{\xi,\lambda}}g(a,\nu)\,\mathrm{d}\mu=\int_{S_{\xi,\lambda}}g(a,\nu)\,\mathrm{d}\mu+O(\lambda^{n-1-2\,\tau}).
$$
In conjunction with \eqref{-1 1} and \eqref{-1 2}, this gives
$$
(\bar D_aG_\lambda)|_{\xi}=\int_{S_{\xi,\lambda}}\operatorname{div}a-g(D_\nu a,\nu)-(n-1)\,\lambda^{-1}\,g(a,\nu)\,\mathrm{d}\mu+o(1).
$$
The assertion now follows from  Lemma \ref{perturbations}.
\end{proof}

\begin{lem}
Let $\delta\in(0,1/2)$. There holds, as $\lambda\to\infty$, \label{g expansion} 
\begin{equation*}
\begin{aligned}
G_\lambda(\xi)=\,&G_\lambda(0)+\frac12\,n\,(n-1)\,\omega_n\,m\,|\xi|^2+o(1), \\
(\bar DG_\lambda)|_\xi=\,&n\,(n-1)\,\omega_n\,m\,\xi+o(1), \text{ and}\\
(\bar D^2G_\lambda)|_\xi=\,&n\,(n-1)\,\omega_n\,m\,\operatorname{Id}+o(1)
\end{aligned}
\end{equation*}
 uniformly for all $\xi\in\mathbb{R}^n$ with $|\xi|<1-\delta$.
\end{lem}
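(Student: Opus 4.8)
The plan is to feed the pointwise formula for $(\bar D_aG_\lambda)|_\xi$ supplied by Lemma \ref{G expansion} into a computation that evaluates the boundary integral there to $8\,\pi\,m\,\bar g(a,\xi)$, uniformly for $|\xi|<1-\delta$; the expansion of $G_\lambda$ itself will then follow by integrating this gradient identity. Writing
\[
I(\xi,\lambda)=\frac12\int_{S_{\xi,\lambda}}\Big[\bar D_a\operatorname{tr}\sigma-(\bar D_a\sigma)(\bar\nu,\bar\nu)-2\,\lambda^{-1}\operatorname{tr}\sigma\,\bar g(a,\bar\nu)\Big]\,\mathrm{d}\bar\mu,
\]
the whole task reduces to proving $I(\xi,\lambda)=8\,\pi\,m\,\bar g(a,\xi)+o(1)$.

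First I would integrate by parts on the closed Euclidean sphere $S_{\xi,\lambda}$. Splitting $a=\bar g(a,\bar\nu)\,\bar\nu+a^{\top}$ into normal and tangential parts and using that the tangential divergence of a constant vector field on the sphere of radius $\lambda$ equals $-2\,\lambda^{-1}\bar g(a,\bar\nu)$, the tangential component of $\int_{S_{\xi,\lambda}}\bar D_a\operatorname{tr}\sigma\,\mathrm{d}\bar\mu$ cancels exactly the explicit term $-2\,\lambda^{-1}\int_{S_{\xi,\lambda}}\operatorname{tr}\sigma\,\bar g(a,\bar\nu)\,\mathrm{d}\bar\mu$. This leaves
\[
2\,I(\xi,\lambda)=\int_{S_{\xi,\lambda}}\bar g(a,\bar\nu)\,\partial_{\bar\nu}\operatorname{tr}\sigma\,\mathrm{d}\bar\mu-\int_{S_{\xi,\lambda}}(\bar D_a\sigma)(\bar\nu,\bar\nu)\,\mathrm{d}\bar\mu,
\]
and a further integration by parts removes the tangential part of the second integrand at the cost of lower-order terms of the schematic form $\lambda^{-1}\int_{S_{\xi,\lambda}}\sigma(\,\cdot\,,\bar\nu)\,\mathrm{d}\bar\mu$.

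Next I would expose the dependence on $\xi$ through the identity $\bar g(a,\bar\nu)=\lambda^{-1}\,\bar g(a,x)-\bar g(a,\xi)$, valid on $S_{\xi,\lambda}$ because $\bar\nu=\lambda^{-1}x-\xi$. The two surviving normal-derivative integrals are of Arnowitt--Deser--Misner type, and the plan is to show that, together, they reorganize into $16\,\pi\,m\,\bar g(a,\xi)+o(1)$. Concretely, I would pass from these surface integrals to bulk integrals via the divergence theorem and invoke the integrability of the scalar curvature: the relevant divergences equal $R$ plus terms quadratic in $\partial\sigma$, the latter being of order $|x|^{-2\tau-2}$ and hence integrable on the far region precisely because $\tau>1/2$. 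This is the same mechanism that renders \eqref{mass} well defined, and it also shows that the ADM flux through $S_{\xi,\lambda}$ is center independent up to $o(1)$. The explicit coefficient $16\,\pi\,m\,\bar g(a,\xi)$ and the linearity in $\xi$ then emerge from the displacement of the center to $\lambda\,\xi$ together with the spherical averages $\int_{S^2}\omega^i\omega^j\,\mathrm{d}\omega=\tfrac{4\pi}{3}\delta^{ij}$; the complementary contributions carrying the factor $\lambda^{-1}\bar g(a,x)$, as well as the lower-order $\lambda^{-1}\sigma$ terms generated above, are shown to be $o(1)$ by the same integrability argument.

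The hard part will be exactly this last step. Under the decay \eqref{section 2 decay} the individual boundary integrals are only of size $\lambda^{1-\tau}$ and therefore diverge when $\tau<1$, so no termwise estimate can succeed: only the specific combination dictated by the ADM structure converges, and the cancellations must be organized uniformly in $\xi$ with $|\xi|<1-\delta$. Once $I(\xi,\lambda)=8\,\pi\,m\,\bar g(a,\xi)+o(1)$ is secured uniformly, letting $a$ range over an orthonormal basis gives $(\bar DG_\lambda)|_\xi=8\,\pi\,m\,\xi+o(1)$, and integrating this identity along the straight segment from $0$ to $\xi$ inside $\{\,|\xi|<1-\delta\,\}$, where the error is uniformly $o(1)$, yields $G_\lambda(\xi)=G_\lambda(0)+4\,\pi\,m\,|\xi|^2+o(1)$.
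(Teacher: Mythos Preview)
Your approach is correct and essentially identical to the paper's: both start from Lemma~\ref{G expansion}, perform the integration by parts on the sphere (which the paper packages separately as Lemma~\ref{ce int by parts rema}), expose the $\xi$-dependence via $\bar\nu=\lambda^{-1}x-\xi$, and then use the divergence theorem together with the scalar curvature expansion \eqref{scalar curvature} and the integrability of $R$ to transfer to centered ADM-type flux integrals, which are evaluated by Lemma~\ref{mass and com convergence}. The spherical averages you mention are not actually needed---the mass appears directly from the flux definition---and the $o(1)$ vanishing of the $\lambda^{-1}\bar g(a,x)$-weighted piece together with the lower-order $\lambda^{-1}\sigma$ terms is precisely \eqref{second}, whose proof in the paper is a short ODE argument rather than a repetition of the mass-convergence argument.
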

\begin{proof}
	Let $a\in\mathbb{R}^n$ with $|a|=1$. Using Lemma \ref{G expansion} and Lemma \ref{ce int by parts rema}, we have
	\begin{align*} 
	(	\bar D_a G_\lambda)|_\xi=&\,\frac12\,\lambda^{-1} \int_{S_{\xi,\lambda}}\bar g(a,x-\lambda\,\xi)\,\big[\bar D_{\bar\nu}\bar{\operatorname{tr}}\,\sigma- (\bar{\operatorname{div}}\,\sigma)(\bar\nu)\big] +\sigma(\bar\nu,a)-\bar g(a,\bar\nu)\,\bar{\operatorname{tr}}\,\sigma\,\mathrm{d}\bar\mu\\&\qquad+o(1).
	\end{align*} 
	Note that,	by \eqref{scalar curvature},
	 \begin{align*}
	& \bar{\operatorname{div}}\left(\bar g(a,x-\lambda\,\xi)\,[\bar D\,\bar{\operatorname{tr}}\,\sigma-(\bar{\operatorname{div}}\,\sigma)]+\, [\sigma(a,\,\cdot\,)-\bar g(a,\,\cdot\,)\,\bar{\operatorname{tr}}\,\sigma]\right) =-R\,\bar g(a,x-\lambda\,\xi)+O(|x|^{-1-2\,\tau}).
	\end{align*} 

 Using the divergence theorem, 	we find that
		\begin{equation} 
		\begin{aligned} \label{compare} 
	(	\bar D_a G_\lambda)|_\xi
	=&\,\frac12\,\bar g(a,\xi)\int_{S_{2\,\lambda}(0)} (\bar{\operatorname{div}}\,\sigma)(\bar\nu)-\bar D_{\bar\nu}\bar{\operatorname{tr}}\,\sigma\,\mathrm{d}\bar\mu\\
	&\qquad+\frac12\,\lambda^{-1} \int_{S_{2\,\lambda}(0)}\bar g(a,x)\,\big[\bar D_{\bar\nu}\bar{\operatorname{tr}}\,\sigma- (\bar{\operatorname{div}}\,\sigma)(\bar\nu)\big] +\sigma(\bar\nu,a)-\bar g(a,\bar\nu)\,\bar{\operatorname{tr}}\,\sigma\,\mathrm{d}\bar\mu\\
	&\qquad-\frac12\,\int_{B_{2\,\lambda}(0)\setminus B_{\lambda}(\lambda\,\xi)}R\,\bar g(a,\lambda^{-1}\,x-\xi)\,\mathrm{d}\bar v.
\\	&\qquad+o(1). 
	\end{aligned}
\end{equation} 
The preceding estimates may all be differentiated with respect to $\xi$. Using that $R=o(|x|^{-n})$, we see that 
$$
\int_{B_{2\,\lambda}(0)\setminus B_{\lambda}(\lambda\,\xi)}R\,\bar g(a,\lambda^{-1}\,x-\xi)\,\mathrm{d}\bar v=o(1)
$$
and 
\begin{equation} \label{r estimate intro}  
\begin{aligned} 
&\frac{d}{ds}\bigg|_{s=0}\int_{B_{2\,\lambda}(0)\setminus B_{\lambda}(\lambda\,(\xi+s\,b))}R\,\bar g(a,\lambda^{-1}\,x-(\xi+s\,b))\,\mathrm{d}\bar v\\&\qquad =-\int_{B_{2\,\lambda}(0)\setminus B_{\lambda}(\lambda\,\xi)}R\,\bar g(a,b)\,\mathrm{d}\bar v-\int_{S_{\xi,\lambda}}R\,\bar g(a,x-\lambda\,\xi)\,\bar g(\bar\nu,b)\,\mathrm{d}\bar \mu\\&\qquad =o(1)
\end{aligned} 
\end{equation} 
for every $b\in\mathbb{R}^n$ with $|b|=1$. 
The assertion of the lemma follows from Lemma \ref{mass and com convergence} and integration.
\end{proof} 

\begin{proof}[Proof of Theorem \ref{existence in all dimensions}]
	Let $\delta=1/4$. Depending on whether $m>0$ or $m<0$, Lemma \ref{g expansion} implies that $G_\lambda$ is strictly radially increasing respectively decreasing on $\{\xi\in\mathbb{R}^n:|\xi|=1/2\}$ for every $\lambda>\lambda_0$ provided that $\lambda_0$ is  large. In particular, $G_\lambda$ has a strict local minimum respectively a strict local maximum $\xi(\lambda)\in \mathbb{R}^n$ with $|\xi(\lambda)|<1/2$. According to Lemma \ref{variational to 3d}, $\Sigma(\lambda)=\Sigma_{\xi(\lambda),\lambda}$ is a constant mean curvature sphere. \\ \indent  Using  Lemma \ref{g expansion}, we find that
	\begin{align} 
	\xi(\lambda)=o(1) \label{xi lambda est}
	\end{align} 
	as $\lambda\to\infty$ and that 
	\begin{align} \label{convex or concave}
	|\bar D^2G_\lambda|\geq \frac12\,n\,(n-1)\,\omega_n\,|m|
	\end{align}
	for every   $\lambda>\lambda_0$ provided that $\lambda_0$ is large. By \eqref{convex or concave} and the implicit function theorem, the map $(\lambda_0,\infty)\to\mathbb{R}^3$, $\lambda\mapsto \xi(\lambda)$ is smooth. Arguing as in the proof of Lemma \ref{g expansion}, we find that \begin{align} \label{g'} 
	\bar DG'_\lambda=o(\lambda^{-1})\end{align}
	as $\lambda\to\infty$ uniformly for all $\xi\in\mathbb{R}^n$ with $|\xi|<1/2$.
	 Differentiating the equation $(\bar D G_\lambda)|_{\xi(\lambda)}=0$ and using \eqref{xi lambda est}, \eqref{convex or concave}, and \eqref{g'}, we find that 
	\begin{align} \label{xi'} 
	\xi'(\lambda)=[(\bar D^2G_\lambda)|_{\xi(\lambda)}]^{-1}\,(\bar DG'_\lambda)|_{\xi(\lambda)}=o(\lambda^{-1}).
	\end{align} 
	We consider the map
	$$
	\Psi: S_1(0)\times(\lambda_0,\infty)\to M\qquad\text{given by}\qquad \Psi(y,\,\lambda)=\lambda\,y+\lambda\,\xi(\lambda)+v_{\xi(\lambda),\lambda}(y)\,y.
	$$
	Note that $\Psi$ is smooth and that $\{\Psi(y,\lambda):y\in S_1(0)\}=\Sigma(\lambda)$. Moreover, using \eqref{xi}, \eqref{xi'}, \eqref{section 2 decay}, and \eqref{u estimate}, we have
	$$
	\bar g(\Psi',y)=1+O(\xi(\lambda))+O(\lambda\,\xi'(\lambda))+o(1)=1+o(1).
	$$
It follows that the spheres $\{\Sigma(\lambda):\lambda>\lambda_0\}$ form a smooth  foliation of the complement of a compact set provided that $\lambda_0>1$ is sufficiently large. \\ \indent
	Next, recall from \eqref{MC expansion} that
	$$
	H(\Sigma(\lambda))=\bar H(\Sigma(\lambda))+O(\lambda^{-1-\tau})=(n-1)\,\lambda^{-1}+O(\lambda^{-1-\tau}).
	$$ Arguing as in the derivation of \eqref{MC expansion} using \eqref{u estimate}, we see that
	$$
H(\Sigma(\lambda))'=-(n-1)\,\lambda^{-2}+O(\lambda^{-2-\tau}).
	$$
	 It follows that $\lambda\mapsto H(\Sigma(\lambda))
	$
	is strictly decreasing on $(\lambda_0,\infty)$ provided that $\lambda_0>1$ is sufficiently large. 
	\\ \indent
Recall from \eqref{stability} that $\Sigma(\lambda)$ is stable if and only if 
$$
\int_{\Sigma(\lambda)}f\,Lf\,\mathrm{d}\mu\geq 0
$$
for every $f\in C^\infty(\Sigma(\lambda))$ with
\begin{align} \label{zero mean}   
\int_{\Sigma(\lambda)}f\,\mathrm{d}\mu=0.
\end{align} Here, $L$ is the stability operator defined in \eqref{stability operator}. We decompose $f=f_0+f_1+f_2$ where $f_0\in \Lambda_0(S_{\xi(\lambda),\lambda})$, $f_1\in \Lambda_1(S_{\xi(\lambda),\lambda})$, and $f_2\perp  \Lambda_0(S_{\xi(\lambda),\lambda})\oplus\Lambda_1(S_{\xi(\lambda),\lambda})$. By scaling, we may assume that 
$$
f_1=\lambda\,\bar g(a,\bar\nu(S_{\xi(\lambda),\lambda}))
$$
for some $a\in\mathbb{R}$ with $|a|=1$ unless $f_1=0$. Note that
\begin{align*}
\lambda^{-2}\,	\int_{S_{\xi(\lambda),\lambda}} f_1^2\,\mathrm{d}\bar \mu=\omega_n\,\lambda^{n-1}
\end{align*}
and that $\bar L(S_{\xi(\lambda),\lambda})f_1=0$. Using \eqref{section 2 decay} and \eqref{u estimate}, we see that
\begin{align} \label{f1 l2 est}
\lambda^{-2}\,	\int_{\Sigma(\lambda)} f_1^2\,\mathrm{d}\mu=\omega_n\,\lambda^{n-1}+o(\lambda^{n-1})
\end{align}
and 
\begin{align} \label{Lf1} 
	Lf_1=O(\lambda^{-2-\tau}\,f_1).
\end{align}
\indent  Using \eqref{zero mean}, \eqref{section 2 decay}, and \eqref{u estimate}, we have
\begin{align} \label{s t 2 -1}
f_0=O(\lambda^{(1-n)/2-\tau}\,||f||_{L^2(\Sigma(\lambda))}).
\end{align}
Note that $\operatorname{proj}_{\Lambda_0(S_{\xi(\lambda),\lambda})^\perp}\bar L(S_{\xi(\lambda),\lambda})f_0=0$.  Using  \eqref{section 2 decay} and \eqref{u estimate},  we find that
\begin{align} \label{st 2 0}
	\operatorname{proj}_{\Lambda_0(S_{\xi(\lambda),\lambda})^\perp}Lf_0=O(\lambda^{-2-\tau}\,f_0).
\end{align}
\indent 
 Using \eqref{stability operator}, \eqref{section 2 decay}, \eqref{u estimate}, and Corollary \ref{coro st estimate}, we have
\begin{align} \label{2L2}
\int_{\Sigma(\lambda)}f_2\,Lf_2\,\mathrm{d}\mu\geq n\,\lambda^{-2}\,\int_{\Sigma(\lambda)}f_2^2\,\mathrm{d}\mu 
\end{align} 
 provided that $\lambda>1$ is sufficiently large. \\ \indent Recall that $\xi(\lambda)$ is a critical point of $G_\lambda$ and that \begin{align*} \bar D_{a,a}^2G_\lambda \begin{dcases} 
\geq& n\,(n-1)\,\omega_n\,m\,\operatorname{Id}+o(1)\quad \text{ if } m>0, \\
\leq& n\,(n-1)\,\omega_n\,m\,\operatorname{Id}+o(1)\quad\text{ if } m<0;
\end{dcases} 
\end{align*}   see Lemma \ref{g expansion}. In conjunction with Lemma \ref{variation lemma}, \eqref{section 2 decay}, and  \eqref{u estimate},  it follows that
\begin{align*} 
\lambda^{-1}\,\int_{\Sigma(\lambda)}(f_1+f_3)\,L(f_1+f_3)\,\mathrm{d}\mu \begin{dcases}  \geq&  n\,(n-1)\,\omega_n\,m+o(1) \quad  \text{ if }  m>0,\\
 \leq&  n\,(n-1)\,\omega_n\,m+o(1) \quad  \text{ if }  m<0.
\end{dcases} 
\end{align*} 
where $\lambda^{-1}\,|f_3|+|\bar \nabla f_3|+\lambda\,|\bar \nabla ^2 f_3|=O(\lambda^{-\tau})$; see \eqref{f}. Note that
$$
\lambda^{-1}\,\int_{\Sigma(\lambda)} f_3\,Lf_3\,\mathrm{d}\mu=o(1).
$$ 
Using \eqref{Lf1}, we obtain 
$$
\lambda^{-1}\,\int_{\Sigma(\lambda)} f_3\,Lf_1\,\mathrm{d}\mu=o(1).
$$
In conjunction with \eqref{f1 l2 est}, we conclude that, if $m>0$,
\begin{align} \label{m>0}
	\int_{\Sigma(\lambda)}f_1\,Lf_1\,\mathrm{d}\mu\geq n\,m\,\lambda^{-n}\,\int_{\Sigma(\lambda)} f_1^2\,\mathrm{d}\mu 
\end{align} 
and, if $m<0$,  
\begin{align} \label{m<0}
	\int_{\Sigma(\lambda)}f_1\,Lf_1\,\mathrm{d}\mu\leq n\,m\,\lambda^{-n}\,\int_{\Sigma(\lambda)} f_1^2\,\mathrm{d}\mu . 
\end{align} 
\indent Using \eqref{s t 2 -1}, we have 
\begin{align} \label{0L0} 
\int_{\Sigma(\lambda)}	f_0\,Lf_0\,\mathrm{d}\mu=o(\lambda^{-n}\,||f||^2_{L^2(\Sigma(\lambda))}).
\end{align}
Using also \eqref{st 2 0}, we have
\begin{align} \label{12L0}
	\int_{\Sigma(\lambda)} f_2\,Lf_0\,\mathrm{d}\mu=o(\lambda^{-n}\,||f||^2_{L^2(\Sigma(\lambda))})\qquad\text{and}\qquad \int_{\Sigma(\lambda)} f_1\,Lf_0\,\mathrm{d}\mu=o(\lambda^{-n} \,||f||^2_{L^2(\Sigma(\lambda))}).
\end{align} 
Finally, using \eqref{Lf1}, we have
\begin{align} \label{2L1}
	\int_{\Sigma(\lambda)} f_2\,Lf_1\,\mathrm{d}\mu=o(\lambda^{-2}\,||f_2||^2_{L^2(\Sigma(\lambda))})+o(\lambda^{-n}\,||f_1||^2_{L^2(\Sigma(\lambda))})
\end{align}
\indent Assume that $m>0$. Assembling  \eqref{2L2}, \eqref{m>0}, \eqref{0L0}, \eqref{12L0}, and \eqref{2L1}, we see that $\Sigma(\lambda)$ is stable. \\ \indent
Assume that $m<0$. We choose $f\in C^\infty(\Sigma(\lambda))$ with $f_0=f_2=0$ and $f_1\neq 0$. Using \eqref{m<0}, we see that $\Sigma(\lambda)$ is not stable. 
\\ \indent
The assertion follows.
\end{proof} 
\begin{prop} \label{local uniqueness result 1} Let $(M,g)$ be a connected complete Riemannian three-manifold that is $C^3$-asympt-otically flat with mass $m\neq 0$.
	Given $\delta>0$, there exists $r>1$ such that every stable constant mean curvature sphere $\Sigma\subset M$ that encloses $B_r$ with 
	\begin{align*} 
		\delta \,\lambda(\Sigma)<\rho(\Sigma)
	\end{align*} 
	satisfies $\Sigma=\Sigma(H)$ for some $H\in(0,H_0)$.
\end{prop} 
\begin{proof}
	 Using Lemma \ref{g expansion},  we see that, depending  on whether $m>0$ or $m<0$, $G_\lambda$ is strictly convex respectively strictly concave on $\{\xi\in\mathbb{R}^3:|\xi|<1-\delta\}$ provided that $\lambda>1$ is sufficiently large. In particular, $G_\lambda$ has at most one critical point. Using  Remark \ref{h circ estimate bar} and \eqref{H estimate}, we may now argue as in the proof of \cite[Theorem 2]{BrendleEichmair}  that every stable constant mean curvature sphere $\Sigma\subset M$ with $\delta\,\lambda(\Sigma)<\rho(\Sigma)$ and $\rho(\Sigma)>1$ sufficiently large satisfies $\Sigma=\Sigma(\lambda)$ for some $\lambda>\lambda_0$.
\end{proof}
\begin{prop} \label{local uniqueness result 2} Let $(M,g)$ be a connected complete Riemannian manifold of dimension $3\leq n\leq 7$ that is $C^3$-asymptotically flat with mass $m\neq 0$.
	Given $\delta>0$, there exists $r>1$ such that every isoperimetric hypersurface $\Sigma\subset M$ that encloses $B_r$ with 
	\begin{align*} 
		\delta \,\lambda(\Sigma)<\rho(\Sigma)
	\end{align*} 
	satisfies $\Sigma=\Sigma(H)$ for some $H\in(0,H_0)$.
\end{prop} 
\begin{proof}
	We may argue as in the proof of Proposition \ref{local uniqueness result 1}, except that we use \cite[Lemma 56]{eichmair2023schoen} instead of Remark \ref{h circ estimate bar} and \eqref{H estimate}.
\end{proof}

\section{Proof of Theorem \ref{com thm}}
Let $n\geq 3$. In this section, we assume that $g$ is a  Riemannian metric on $\mathbb{R}^n$ whose scalar curvature is integrable with $R=o(|x|^{-n})$ as $x\to\infty$ and that there is $\tau\in((n-2)/2,n-2]$ with  
\begin{align} \label{section 4 decay} 
	g=\bar g+\sigma \qquad \text{where} \qquad 
	\partial_J \sigma=O\left(|x|^{-\tau-|J|}\right)
\end{align} 
for every multi-index $J$ with $|J|\leq 3$. \\ \indent  We define the metric \begin{align} \label{tilde metric} \tilde g(x)=\frac12\, [g(x)+g(-x)]\end{align}  and  the $(0,2)$-tensors $$\tilde \sigma(x)=\frac12\,[\sigma(x)+\sigma(-x)]\qquad \text{and} \qquad \hat \sigma(x)=\frac12\,[\sigma(x)-\sigma(-x)].$$ 
We also require that
\begin{align} \label{section 4 RT}
	\partial_J \hat\sigma=O\left(|x|^{-1- \tau-|J|}\right)
\end{align} 
 for every multi-index $J$ with $|J|\leq 2$. \\ \indent	We use a  tilde to indicate that a quantity is computed with respect to the metric \eqref{tilde metric}.\\ \indent 
Let $\lambda>\lambda_0$ where $\lambda_0>1$ is large.  Recall from the proof of Theorem \ref{existence in all dimensions} that $\xi(\lambda)\in\mathbb{R}^n$ is the unique critical point of the reduced area function $G_\lambda:\{\xi\in\mathbb{R}^n:|\xi|<1/2\}\to\mathbb{R}$. We abbreviate $\Sigma(\lambda)=\Sigma_{\xi(\lambda),\lambda}$,  $S(\lambda)=S_{\xi(\lambda),\lambda}$, $u=u_{\xi(\lambda),\lambda}$, and $\Phi_\lambda=\Phi^{u_{\xi(\lambda),\lambda}}_{\xi,\lambda}$. \\ \indent 
As in, e.g., \cite[\S 2]{Huang}, given $f\in C^0(S(\lambda))$, we consider the functions $f^{\operatorname{e}},\, f^{\operatorname{o}}\in C^0(S(\lambda))$ with 
$$
f^{\operatorname{e}}=\frac12\,[f(x)+f(2\,\lambda\,\xi(\lambda)-x)]\qquad \text{and}\qquad   f^{\operatorname{o}}=\frac12\,[f(x)-f(2\,\lambda\,\xi(\lambda)-x)].
$$
Note that $f^{\operatorname{e}}$ and $f^{\operatorname{o}}$ are the symmetric respectively antisymmetric part of $f$ with respect to reflection across the center of $S(\lambda)$. We say that $f$ is even if $f^{\text{o}}=0$ and that $f$ is odd if $f^{\text{e}} = 0$. \\ \indent 
As noted in \cite[\S 3]{Huang} or \cite[Proposition 6.4]{Nerz}, the Regge-Teitelboim conditions \eqref{section 4 RT} lead to an improved estimate for $u^\text{o}$. Geometrically, this estimate means that the leaves of the foliation \eqref{cmc foliation 3} are asymptotically symmetric. We record the corresponding estimate for our setting  in the following lemma. We include the proof for completeness.

\begin{lem} There \label{u odd lemma} holds
	\begin{align} \label{u odd assumption}  
		\lambda^{-1}\,| u^{\text{o}}|+|\bar\nabla u^{\text{o}}|+\lambda\,|\bar\nabla^2u^{\text{o}}|=O(\lambda^{-\tau}\,|\xi(\lambda)|)+O(\lambda^{-1-\tau}).
	\end{align} 
\end{lem} 
\begin{proof}
	Using \eqref{u estimate} and Taylor's theorem, we compute
	\begin{equation} \label{Euclidean}
	\begin{aligned}   
	&\bar H(\Sigma(\lambda))-\bar H(S(\lambda))\\&\qquad =-\bar \Delta u-(n-1)\,\lambda^{-2}\,u+\lambda^{-1}\,u*\bar\Delta u+\lambda^{-1}\,\bar\nabla u*\bar\nabla u+\lambda^{-3}\,u*u+O(\lambda^{-1-3\,\tau}).
	\end{aligned} 
\end{equation} 
Moreover,	using \eqref{u estimate}, \eqref{mean curvature change}, and \eqref{section 4 decay}, we have
\begin{equation}\label{non Euclidean} 
\begin{aligned}  
&[H(\Sigma(\lambda))-\bar H(\Sigma(\lambda))]-[H(S(\lambda))-\bar H(S(\lambda))]\\&\qquad =\bar D^2\sigma*u+\bar D\sigma*\bar\nabla u+\sigma*\bar\nabla^2 u+O(\lambda^{-1-3\,\tau}).
\end{aligned} 
\end{equation} 
 \indent 
	Note that $H(\Sigma(\lambda))^{\text{o}}=0$. Using Lemma \ref{perturbations}, the decay assumptions \eqref{section 4 decay} and \eqref{section 4 RT}, as well as Taylor's theorem, we find that
	$$
	H(S(\lambda))^{\text{o}}=O(\lambda^{-1-\tau}\,|\xi(\lambda)|)+O(\lambda^{-2-\tau}).
	$$
Indeed, we have,	e.g., 
	$$
	\bar{\operatorname{tr}}\,\sigma(x)-\bar{\operatorname{tr}}\,\sigma(2\,\lambda\,\xi(\lambda)-x)=	\bar{\operatorname{tr}}\,\sigma(x)-\bar{\operatorname{tr}}\,\sigma(-x)+O(\lambda^{-\tau}\,|\xi(\lambda)|)=O(\lambda^{-\tau}\,|\xi(\lambda)|)+O(\lambda^{-1-\tau}).
	$$
	Likewise, using also \eqref{u estimate} and \eqref{non Euclidean}, we obtain
\begin{align*} 
&[H(\Sigma(\lambda))-\bar H(\Sigma(\lambda))]^o-[H(S(\lambda))-\bar H(S(\lambda))]^o\\&\qquad =O(\lambda^{-2-\tau}\,|u^o|)+O(\lambda^{-1-\tau}\,|\bar\nabla u^o|)+O(\lambda^{-\tau}\,|\bar\nabla^2 u^o|)+O(\lambda^{-1-2\,\tau}\,|\xi(\lambda)|)+O(\lambda^{-2-2\,\tau}).
\end{align*} 
Finally, using \eqref{Euclidean} and \eqref{u estimate}, we have
\begin{align*} 
&\bar H(\Sigma(\lambda))^o-\bar H(S(\lambda))^o\\&\qquad=-\bar \Delta u^o-(n-1)\,\lambda^{-2}\,u^o+O(\lambda^{-2-\tau}\,|u^o|)+O(\lambda^{-1-\tau}\,|\bar\nabla u^o|)+O(\lambda^{-\tau}\,|\bar\nabla^2 u^o|)+O(\lambda^{-1-3\,\tau}).
\end{align*} 
We conclude that
\begin{align*} 
&-\bar \Delta u^o-(n-1)\,\lambda^{-2}\,u^o\\&\qquad =O(\lambda^{-1-\tau}\,|\xi(\lambda)|)+O(\lambda^{-2-\tau})+O(\lambda^{-2-\tau}\,|u^o|)+O(\lambda^{-1-\tau}\,|\bar\nabla u^o|)+O(\lambda^{-\tau}\,|\bar\nabla^2 u^o|).
\end{align*} 
By Lemma \ref{spherical harmonics lemma 1}, the operator
 $$
 -\bar \Delta-(n-1)\,\lambda^{-2}: \{u\in H^2(S(\lambda)):u\perp\Lambda_1(S(\lambda))\}\to \{f\in L^2(S(\lambda)):f\perp\Lambda_1(S(\lambda))\}
 $$
 is an isomorphism.  Moreover, by Proposition \ref{cmc ls prop}, $u^o\perp \Lambda_0(S(\lambda))\oplus \Lambda_1(S(\lambda))$.  The assertion now follows from elliptic regularity.
\end{proof}
\begin{lem}
	There holds, as $\lambda\to\infty$, \label{first der comp} 
	$$
	\lambda\,\int_{\Sigma(\lambda)}H\,g(a,\nu)\,\mathrm{d}\mu=\lambda\,\int_{S(\lambda)}H\,g(a,\nu)\,\mathrm{d}\mu+o(1)+o(\lambda\,|\xi(\lambda)|).
	$$
\end{lem}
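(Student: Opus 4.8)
The plan is to compare the two flux integrals by realizing $\Sigma(\lambda)$ as the normal graph $\Phi^{u}_{\xi(\lambda),\lambda}(S(\lambda))$ and expanding every geometric quantity on $\Sigma(\lambda)$ around the corresponding quantity on $S(\lambda)$ in powers of $u = u_{\xi(\lambda),\lambda}$, using the variation formula \eqref{mean curvature change} together with Lemma \ref{perturbations} and Lemma \ref{q perturbations}. This turns the difference $\int_{\Sigma(\lambda)}H\,g(a,\nu)\,\mathrm d\mu - \int_{S(\lambda)}H\,g(a,\nu)\,\mathrm d\mu$ into a linear functional of $u$ plus a remainder quadratic in $(u,\bar\nabla u,\bar\nabla^2 u)$. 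Two structural facts drive the argument. First, for every closed surface $\Sigma'$ one has $\int_{\Sigma'}\bar H\,\bar g(a,\bar\nu)\,\mathrm d\bar\mu = 0$, since $\int_{\Sigma'}\bar H\,\bar\nu\,\mathrm d\bar\mu$ is, up to sign, the integral of $\bar\Delta_{\Sigma'}x$ over the closed surface and hence vanishes; consequently the two Euclidean fluxes over $\Sigma(\lambda)$ and $S(\lambda)$ are \emph{separately} zero, so the entire difference is governed by $\sigma$ and each surviving term carries a decay factor $|x|^{-\tau}\sim\lambda^{-\tau}$. Second, $\psi := \bar g(a,\bar\nu) = \lambda^{-1}\,\bar g(a,x-\lambda\,\xi(\lambda))$ restricts to a first spherical harmonic on $S(\lambda)$.

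For the linear part I would isolate the contribution coming from the variation of the mean curvature, which is a multiple of $\int_{S(\lambda)}(Lu)\,\psi\,\mathrm d\bar\mu$ with $L$ the operator from \eqref{stability operator}. Since $\psi$ is a first harmonic, $\bar\Delta\psi = -2\,\lambda^{-2}\,\psi$, so the leading part $-\bar\Delta-2\,\lambda^{-2}$ of $L$ annihilates $\psi$; integrating by parts, $\int(Lu)\,\psi\,\mathrm d\bar\mu = \int u\,(L\psi)\,\mathrm d\bar\mu$ with $L\psi$ of order $\lambda^{-2}\,|x|^{-\tau}$, which already makes this term $o(\lambda^{-1})$ after the prefactor $\lambda$. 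The remaining linear terms pair $u$ or $\bar\nabla u$ against functions lying in $\Lambda_1(S(\lambda))$ to leading order, namely $\psi$ and $\bar{\operatorname{div}}_{S(\lambda)}a = -\bar H\,\psi$, and therefore vanish because $u\perp\Lambda_1(S(\lambda))$ by Proposition \ref{cmc ls prop}, up to metric corrections to the measure that again carry a factor of $\sigma$.

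It remains to control the residual errors, and here I would decompose $u = u^{\mathrm e} + u^{\mathrm o}$ with respect to the reflection across the center of $S(\lambda)$, as in Lemma \ref{u odd lemma}. Because $\psi$ is odd under this reflection while $S(\lambda)$ and $\bar g$ are symmetric, the pairing of $\psi$ with the even part $u^{\mathrm e}$ vanishes with respect to $\mathrm d\bar\mu$; what survives must involve either the reflection-antisymmetric part $\hat\sigma$ of the metric, small by the Regge–Teitelboim decay \eqref{section 4 RT} with $\hat\tau>1/2$, or the discrepancy between the reflection across $\lambda\,\xi(\lambda)$ and the reflection across the origin, which contributes a factor $\lambda\,|\xi(\lambda)|$ and thus lands in the allowed error $o(\lambda\,|\xi(\lambda)|)$. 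The odd part $u^{\mathrm o}$ is treated directly through its improved bound \eqref{u odd assumption}, which supplies exactly the gain $|\xi(\lambda)|$ (together with a $\lambda^{-1/2}$ gain) needed to absorb its contribution into $o(1)+o(\lambda\,|\xi(\lambda)|)$. The quadratic remainder is estimated using the $C^2$-bound \eqref{u estimate} together with the $\sigma$-factor guaranteed by the Euclidean identity above, with Lemma \ref{spherical harmonics lemma 1} used to trade derivatives for decay where necessary.

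The main obstacle is the even part $u^{\mathrm e}$: by \eqref{u estimate} it is only known to be $o(\lambda^{1/2})$, far too large to estimate termwise, so its entire contribution must be extracted from the parity cancellation against the odd first harmonic $\psi$. The delicate point is to verify that, after this cancellation, the genuinely symmetry-breaking remainders — the antisymmetric metric part $\hat\sigma$, the $O(\sigma)$ corrections to $L$, to $\nu$, and to $\mathrm d\mu$, and the off-center shift of the reflection — are each bounded by $o(1)+o(\lambda\,|\xi(\lambda)|)$. This is precisely where $\tau>1/2$ and $\hat\tau>1/2$ enter, and balancing the loss from the large even part against the combined gains from the first-harmonic identity $\bar\Delta\psi = -2\,\lambda^{-2}\,\psi$, the orthogonality $u\perp\Lambda_1(S(\lambda))$, and the Regge–Teitelboim asymmetry is the crux of the proof.
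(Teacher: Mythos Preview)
Your overall strategy---split $u=u^{\mathrm e}+u^{\mathrm o}$, feed the even part into a parity cancellation and the odd part into Lemma~\ref{u odd lemma}---is the right one, and it is exactly what the paper exploits. But you miss the one step that makes the paper's proof short: the first variation formula. Since $a$ is a fixed vector field, $\int_{\Sigma'}H\,g(a,\nu)\,\mathrm d\mu=\int_{\Sigma'}[\operatorname{div}a-g(D_\nu a,\nu)]\,\mathrm d\mu$ for \emph{every} closed surface $\Sigma'$. This kills $H$ from the integrand at the outset, so the variation of the mean curvature never enters and there is no $\int(Lu)\,\psi$ term to worry about. After the conversion the integrand is already $O(|\partial\sigma|)=O(|x|^{-1-\tau})$, and the paper then (i) replaces $g$ by $\tilde g$ with error $o(\lambda^{-1})$ via \eqref{section 4 RT}, (ii) replaces $u$ by $u^{\mathrm e}$ via \eqref{u odd assumption}, (iii) translates the center from $\lambda\,\xi(\lambda)$ to $0$ at cost $o(|\xi(\lambda)|)$, and (iv) observes that the resulting integrals over $\Sigma_{0,\lambda}(u^{\mathrm e})$ and $S_\lambda(0)$ vanish \emph{exactly} by the reflection symmetry of $\tilde g$. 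No self-adjointness trick, no orthogonality $u\perp\Lambda_1$, no quadratic remainder analysis.

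There is also a genuine error in your argument as written. You claim that integrating by parts gives $\int(Lu)\,\psi\,\mathrm d\bar\mu=\int u\,(L\psi)\,\mathrm d\bar\mu$ with $L\psi=O(\lambda^{-2-\tau})$, ``which already makes this term $o(\lambda^{-1})$ after the prefactor $\lambda$.'' First, $L$ is self-adjoint with respect to $\mathrm d\mu$, not $\mathrm d\bar\mu$, so the identity holds only for the Euclidean part $-\bar\Delta-2\lambda^{-2}$; the $O(\sigma)$ part of $L$ produces a commutator you have to track. Second, and more seriously, even granting $L\psi=O(\lambda^{-2-\tau})$, the bound $|u|=o(\lambda^{1/2})$ over an area of order $\lambda^2$ gives only $\int u\,(L\psi)\,\mathrm d\bar\mu=o(\lambda^{1/2-\tau})$, hence $o(\lambda^{3/2-\tau})$ after the prefactor---nowhere near $o(1)$ for $\tau$ close to $1/2$. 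So this term is \emph{not} dispatched by the self-adjoint trick alone; it too needs the full parity argument you describe later. Your scheme can be made to work, but the bookkeeping is substantially heavier than the paper's four-line reduction via the first variation identity.
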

\begin{proof}
	By the first variation formula, 
	\begin{align*} 
		\int_{\Sigma(\lambda)}H\,g(a,\nu)\,\mathrm{d}\mu=	\int_{\Sigma(\lambda)}\operatorname{div}a-g(D_\nu a,\nu)\,\mathrm{d}\mu.
	\end{align*} 
Note that 
$$
\operatorname{div}\,a-\tilde{\operatorname{div}}\, a= g*\bar D\hat\sigma*a+\hat \sigma*\bar D\tilde \sigma*a+O(\lambda^{-2-2\,\tau}).
$$
In conjunction with \eqref{u estimate}, \eqref{section 4 decay}, and \eqref{section 4 RT}, we obtain, on $S(\lambda)$,
\begin{align*} 
	\operatorname{div}\,a-\tilde{\operatorname{div}}\, a&=O(\lambda^{-2-\tau})\\
(\operatorname{div}\,a-\tilde{\operatorname{div}}\, a)\circ \Phi_\lambda&=\operatorname{div}\,a-\tilde{\operatorname{div}}\, a+O(\lambda^{-2-2\,\tau})=O(\lambda^{-2-\tau}).
\end{align*} 
Moreover, using \eqref{section 4 RT} and \eqref{u estimate}, we have
\begin{align*} 
&\mathrm{d}\mu(S(\lambda))-\mathrm{d}\tilde \mu(S(\lambda))=O(\lambda^{-1-\tau}), \\ &\Phi_\lambda^*(\mathrm{d}\mu(\Sigma(\lambda))-\mathrm{d}\tilde \mu(\Sigma(\lambda)))=O(\lambda^{-1-\tau}).
\end{align*} 
Using \eqref{u estimate} and \eqref{section 4 decay}, we see that
$$
\Phi_\lambda^*(\mathrm{d}\mu(\Sigma(\lambda)))-\mathrm{d}\mu(S(\lambda))=O(\lambda^{-\tau}).
$$
It follows that
\begin{align} \label{to obtain} 
\int_{\Sigma(\lambda)}\operatorname{div}a\,\mathrm{d}\mu-\int_{S(\lambda)}\operatorname{div}a\,\mathrm{d}\mu  =\int_{\Sigma(\lambda)}\tilde {\operatorname{div}}\,a\,\mathrm{d}\tilde\mu-\int_{S(\lambda)}\tilde{\operatorname{div}}\,a\,\mathrm{d}\tilde\mu
+o(\lambda^{-1}).
\end{align} 
By an analogous argument, we obtain
	\begin{equation*} 
		\begin{aligned} 
			&\int_{\Sigma(\lambda)}g(D_\nu a,\nu)\,\mathrm{d}\mu-\int_{S(\lambda)}g(D_\nu a,\nu)\,\mathrm{d}\mu  =\int_{\Sigma(\lambda)}\tilde g(\tilde D_{\tilde\nu} a,\tilde\nu)\,\mathrm{d}\tilde\mu-\int_{S(\lambda)}\tilde g(\tilde D_{\tilde\nu} a,\tilde\nu)\,\mathrm{d}\tilde\mu
			+o(\lambda^{-1}).
		\end{aligned} 
	\end{equation*} 
\indent 	Using \eqref{u odd assumption}, \eqref{section 4 decay}, and Taylor's theorem, we find
	\begin{align*} 
		\int_{\Sigma(\lambda)}\tilde {\operatorname{div}}\,a-\tilde g(\tilde D_{\tilde\nu} a,\tilde\nu)\,\mathrm{d}\tilde\mu=\int_{\Sigma_{\xi(\lambda),\lambda}(u^{\text{e}})}\tilde {\operatorname{div}}\,a-\tilde g(\tilde D_{\tilde\nu} a,\tilde\nu)\,\mathrm{d}\tilde\mu+o(\lambda^{-1})+o(|\xi(\lambda)|).
	\end{align*} 
Using  \eqref{u estimate} and \eqref{section 4 decay}, we see that
\begin{align} \label{sff estimate} 
\tilde h(\Sigma_{s\,\xi(\lambda),\lambda}(u^e))=\lambda^{-1}\,\bar g|_{\Sigma_{s\,\xi(\lambda),\lambda}(u^e)}+O(\lambda^{-1-\tau })
\end{align} 
for all $s\in[0,1]$. Note that the normal speed of the variation $\{\Sigma_{s\,\xi(\lambda),\lambda}(u^e):s\in[0,1]\}$ with respect to $\tilde g$ is given by $\lambda\,\tilde g(\xi(\lambda),\tilde \nu)$. For every $s\in[0,1]$, there holds
\begin{align*} 
&\frac{d}{ds}\left(\tilde{\operatorname{div}}\,a-\tilde g(\tilde D_{\tilde \nu}a,\tilde \nu)\right)
\\&\qquad = \lambda\,\tilde g(\xi(\lambda),\tilde \nu)\,\tilde D_{\tilde \nu}\tilde {\operatorname{div}}\,a-\lambda\,\tilde g(\xi(\lambda),\tilde \nu)\,\tilde g(\tilde D^2_{\tilde \nu,\tilde\nu} a,\tilde\nu)+ \lambda\,\tilde g(\tilde D_{\tilde Y}a,\tilde \nu )+\lambda\,\tilde g(\tilde D_{\tilde \nu} a, \tilde Y)
\\ &\qquad \qquad+\lambda\,\tilde g(\tilde D_{\tilde X}a,\tilde \nu)+\lambda\,\tilde g(\tilde D_{\tilde \nu}a, \tilde X)
+\lambda\,\tilde D_{\xi(\lambda)^{\tilde \top}}\tilde{\operatorname{div}}\,a-\lambda\,\tilde D_{\xi(\lambda)^{\tilde \top}}\tilde g(\tilde D_{\tilde \nu}a,\tilde \nu)
\end{align*} 
where $\tilde X,\,\tilde Y\in T\Sigma_{s\,\xi(\lambda),\lambda}(u^e)$ are such that $\tilde g(\tilde X,\tilde Z)=\tilde g(\tilde D_{\tilde Z}\xi(\lambda),\tilde \nu)$ and $\tilde g(\tilde Y,\tilde Z)=\tilde h(\xi(\lambda)^{\tilde \top },\tilde Z)$ for every $\tilde Z\in T\Sigma_{s\,\xi(\lambda),\lambda}(u^e)$. Moreover, 
$$
\frac{d}{ds}\mathrm{d}\tilde \mu=\lambda\,\tilde H\,\tilde g(\xi(\lambda),\tilde\nu)\,\mathrm{d}\tilde \mu+\lambda\,\tilde{\operatorname{div}}_{\Sigma_{s\,\xi(\lambda),\lambda}(u^e)}\,\xi(\lambda)^{\tilde \top}\,\mathrm{d}\tilde \mu.
$$
In conjunction with  \eqref{section 4 decay} and \eqref{sff estimate}, we obtain using integration by parts
	\begin{align*} 
		&\int_{\Sigma_{\xi(\lambda),\lambda}(u^{\text{e}})}\tilde {\operatorname{div}}\,a-\tilde g(\tilde D_{\tilde\nu} a,\tilde\nu)\,\mathrm{d}\tilde\mu\\&\quad\qquad=\int_{\Sigma_{0,\lambda}(u^{\text{e}})}\tilde {\operatorname{div}}\,a-\tilde g(\tilde D_{\tilde\nu} a,\tilde\nu)\,\mathrm{d}\tilde\mu\\&\quad\qquad\qquad +\int_{0}^1\int_{\Sigma_{s\,\xi(\lambda),\lambda}(u^{\text{e}})}\left[\lambda\,\tilde D_{\tilde \nu}\tilde {\operatorname{div}}\,a-\lambda\,\tilde g(\tilde D^2_{\tilde \nu,\tilde\nu} a,\tilde\nu)+(n-1)\,[\tilde {\operatorname{div}}\,a-\tilde g(\tilde D_{\tilde\nu} a,\tilde\nu)]\right]\,\tilde g(\xi(\lambda),\tilde \nu)\\&\,\,\,\quad\qquad\qquad \qquad \qquad\qquad \qquad\quad+\tilde g(\tilde D_{\xi(\lambda)^{\tilde\top} }a,\tilde \nu)+\tilde g(\tilde D_{\tilde \nu}a,\xi(\lambda)^{\tilde\top}) \,\mathrm{d}\tilde\mu\,\mathrm{d}s\\&\quad\qquad\qquad+o(|\xi(\lambda)|).	
	\end{align*} 
Likewise,
\begin{align*} 
	&\int_{S(\lambda)}\tilde {\operatorname{div}}\,a-\tilde g(\tilde D_{\tilde\nu} a,\tilde\nu)\,\mathrm{d}\tilde\mu\\&\quad\qquad=\int_{S_{0,\lambda}}\tilde {\operatorname{div}}\,a-\tilde g(\tilde D_{\tilde\nu} a,\tilde\nu)\,\mathrm{d}\tilde\mu\\&\quad\qquad\qquad +\int_{0}^1\int_{S_{s\,\xi(\lambda),\lambda}}\left[\lambda\,\tilde D_{\tilde \nu}\tilde {\operatorname{div}}\,a-\lambda\,\tilde g(\tilde D^2_{\tilde \nu,\tilde\nu} a,\tilde\nu)+(n-1)\,[\tilde {\operatorname{div}}\,a-\tilde g(\tilde D_{\tilde\nu} a,\tilde\nu)]\right]\,\tilde g(\xi(\lambda),\tilde \nu)\\&\,\,\,\quad\qquad\qquad \qquad \qquad\qquad \qquad\quad+\tilde g(\tilde D_{\xi(\lambda)^{\tilde\top} }a,\tilde \nu)+\tilde g(\tilde D_{\tilde \nu}a,\xi(\lambda)^{\tilde\top}) \,\mathrm{d}\tilde\mu\,\mathrm{d}s\\&\quad\qquad\qquad+o(|\xi(\lambda)|).	
\end{align*}
Using \eqref{section 4 decay}, \eqref{u estimate}, and Taylor's theorem as in the derivation of \eqref{to obtain}, we see that, for every $s\in[0,1]$,
\begin{align*} 
&	\int_{\Sigma_{s\,\xi(\lambda),\lambda}(u^{\text{e}})}\left[\lambda\,\tilde D_{\tilde \nu}\tilde {\operatorname{div}}\,a-\lambda\,\tilde g(\tilde D^2_{\tilde \nu,\tilde\nu} a,\tilde\nu)+2\,[\tilde {\operatorname{div}}\,a-\tilde g(\tilde D_{\tilde\nu} a,\tilde\nu)]\right]\,\tilde g(\xi(\lambda),\tilde \nu)\\&\,\,\,\quad\qquad  \qquad\qquad \qquad\quad+\tilde g(\tilde D_{\xi(\lambda)^{\tilde\top} }a,\tilde \nu)+\tilde g(\tilde D_{\tilde \nu}a,\xi(\lambda)^{\tilde \top}) \,\mathrm{d}\tilde\mu
\\&\qquad =\int_{S_{s\,\xi(\lambda),\lambda}}\left[\lambda\,\tilde D_{\tilde \nu}\tilde {\operatorname{div}}\,a-\lambda\,\tilde g(\tilde D^2_{\tilde \nu,\tilde\nu} a,\tilde\nu)+2\,[\tilde {\operatorname{div}}\,a-\tilde g(\tilde D_{\tilde\nu} a,\tilde\nu)]\right]\,\tilde g(\xi(\lambda),\tilde \nu)\\&\,\,\,\quad\qquad  \qquad\qquad \qquad\quad+\tilde g(\tilde D_{\xi(\lambda)^{\tilde\top} }a,\tilde \nu)+\tilde g(\tilde D_{\tilde \nu}a,\xi(\lambda)^{\tilde\top}) \,\mathrm{d}\tilde\mu\\&\quad\qquad\qquad+o(|\xi(\lambda)|).
\end{align*} 
Consequently,
	\begin{equation*} 
		\begin{aligned} 
			&\int_{\Sigma_{\xi(\lambda),\lambda}(u^{\text{e}})}\tilde {\operatorname{div}}\,a-\tilde g(\tilde D_{\tilde\nu} a,\tilde\nu)\,\mathrm{d}\tilde\mu-\int_{S(\lambda)}\tilde {\operatorname{div}}\,a-\tilde g(\tilde D_{\tilde\nu} a,\tilde\nu)\,\mathrm{d}\tilde\mu	
			\\  &\qquad =\int_{\Sigma_{0,\lambda}(u^{\text{e}})}\tilde {\operatorname{div}}\,a-\tilde g(\tilde D_{\tilde\nu} a,\tilde\nu)\,\mathrm{d}\tilde\mu-\int_{S_{0,\lambda}}\tilde {\operatorname{div}}\,a-\tilde g(\tilde D_{\tilde\nu} a,\tilde\nu)\,\mathrm{d}\tilde\mu \\
			&\qquad\quad\qquad +o(|\xi(\lambda)|).	
		\end{aligned}
	\end{equation*}  By symmetry, we have
	\begin{equation*} 
		\begin{aligned}  
			\int_{\Sigma_{0,\lambda}(u^{\text{e}})}\tilde {\operatorname{div}}\,a-\tilde g(\tilde D_{\tilde\nu} a,\tilde\nu)\,\mathrm{d}\tilde\mu=0\qquad\text{and}\qquad  
			\int_{S_{0,\lambda}}\tilde {\operatorname{div}}\,a-\tilde g(\tilde D_{\tilde\nu} a,\tilde\nu)\,\mathrm{d}\tilde\mu=0.
		\end{aligned}
	\end{equation*}
	Assembling these estimates,	the assertion follows.
\end{proof} 
\begin{lem}
	There holds, as $\lambda\to\infty$, \label{second der comp} 
	$$
	\lambda\,	\int_{\Sigma(\lambda)}H\,g(a,\nu)\,\mathrm{d}\mu=\frac12\,\lambda\,\int_{S(\lambda)}\bar D_a\bar{\operatorname{tr}}\,\sigma-(\bar D_a\sigma)(\bar\nu,\bar\nu)\,\mathrm{d}\bar\mu+o(1)+o(\lambda\,|\xi(\lambda)|).
	$$
\end{lem}
\begin{proof}
	Let $\mathcal{M}$ be the space of $C^3$-asymptotically flat metrics on $\mathbb{R}^n$. Given $\lambda>1$, we let $$\mathcal{F}_\lambda:\{\xi\in\mathbb{R}^n:|\xi|<1/2\}\times \mathcal{M}\to\mathbb{R}$$ be given by
	$$
	\mathcal{F}_\lambda(\xi,g)=\lambda\,	\int_{S_{\xi,\lambda}}\operatorname{div}a-g(D_\nu a,\nu)\,\mathrm{d}\mu.
	$$
	Since $g$ is $C^3$-asymptotically flat, $\mathcal{F}$ is differentiable twice with respect to $\xi$. Moreover, $\mathcal{F}$ is smooth with respect to $g$. By symmetry, for every $g\in\mathcal{M}$,
	\begin{align} \label{symmetry} 
		\mathcal{F}_\lambda(0,\tilde g)=0.
	\end{align} 
By Taylor's theorem, we have	\begin{equation*} 
		\mathcal{F}_\lambda(\xi,g)=\mathcal{F}_\lambda(\xi,\bar g)+(\mathcal{D}_{\sigma}\mathcal{F}_\lambda)|_{(\xi,\bar g)}+\frac12\,(\mathcal{D}^2_{\sigma,\sigma}\mathcal{F}_\lambda)|_{(\xi,\bar g)}+\frac16\,(\mathcal{D}^3_{\sigma,\sigma,\sigma}\mathcal{F}_\lambda)|_{(\xi,\bar g)}+o(1)
	\end{equation*}
as $\lambda\to\infty$
	where $\mathcal{D}$ indicates differentiation with respect to the second variable. On the one hand, \eqref{section 4 decay}, \eqref{section 4 RT}, and Taylor's theorem imply that
	\begin{equation*}  
		\begin{aligned} 
			(\mathcal{D}^2_{\sigma,\sigma}\mathcal{F}_\lambda)|_{(\xi,\bar g)}
			=(\mathcal{D}^2_{\tilde \sigma,\tilde \sigma}\mathcal{F}_\lambda)|_{(\xi,\bar g)}+o(1)
			= (\mathcal{D}^2_{\tilde \sigma,\tilde \sigma}\mathcal{F}_\lambda)|_{(0,\bar g)}+o(1)+o(\lambda\,|\xi|).
		\end{aligned}
	\end{equation*}  
and 
	\begin{equation*}  
	\begin{aligned} 
		(\mathcal{D}^3_{\sigma,\sigma,\sigma}\mathcal{F}_\lambda)|_{(\xi,\bar g)}
		=(\mathcal{D}^3_{\tilde \sigma,\tilde \sigma,\tilde \sigma}\mathcal{F}_\lambda)|_{(\xi,\bar g)}+o(1)
		= (\mathcal{D}^3_{\tilde \sigma,\tilde \sigma,\tilde \sigma}\mathcal{F}_\lambda)|_{(0,\bar g)}+o(1)+o(\lambda\,|\xi|).
	\end{aligned}
\end{equation*}  
	On the other hand, \eqref{symmetry} implies that 
	\begin{align*}   
	(\mathcal{D}^2_{\tilde \sigma,\tilde \sigma}\mathcal{F}_\lambda)|_{(0,\bar g)}=(\mathcal{D}^3_{\tilde \sigma,\tilde \sigma,\tilde \sigma}\mathcal{F}_\lambda)|_{(0,\bar g)}=0
	\end{align*} 
while, clearly,
$$
	\mathcal{F}_\lambda(\xi,\bar g)=0.
$$
	Finally, Lemma \ref{perturbations} implies that \begin{align*}  (\mathcal{D}_\sigma\mathcal{F})|_{(\xi,\bar g)}=\frac12\,\lambda\,\int_{S_{\xi,\lambda}}\bar D_a\bar{\operatorname{tr}}\,\sigma-(\bar D_a\sigma)(\bar\nu,\bar\nu)\,\mathrm{d}\bar\mu.\end{align*} 
	Assembling these estimates and using Lemma \ref{first der comp}, the assertion follows. 
\end{proof} 
\begin{lem}
	There holds, as $\lambda\to\infty$, \label{third der comp} 
	$$
	\lambda\,\int_{\Sigma(\lambda)}H\,g(a,\nu)\,\mathrm{d}\mu=\frac{n-1}{2}\,\int_{S(\lambda)} \bar g(a,\bar\nu)\,\bar{\operatorname{tr}}\,\sigma\,\mathrm{d}\bar\mu+o(1)+o(\lambda\,|\xi(\lambda)|).
	$$
\end{lem}
\begin{proof} 
	Since $H(\Sigma(\lambda))$ is constant, we have
	\begin{align*} 
		\lambda\,\int_{\Sigma(\lambda)}H\,g(a,\nu)\,\mathrm{d}\mu =\lambda\,H(\Sigma(\lambda))\,\int_{\Sigma(\lambda)}g(a,\nu)\,\mathrm{d}\mu.
	\end{align*} 
	Arguing as in the proof of Lemma \ref{first der comp}, we obtain 
	\begin{equation*} 
		\begin{aligned} 
			\int_{\Sigma(\lambda)}g(a,\nu)\,\mathrm{d}\mu=\int_{S(\lambda)}g(a,\nu)\,\mathrm{d}\mu+o(1)+o(\lambda\,|\xi(\lambda)|).
		\end{aligned}
	\end{equation*} 
	Arguing as in the proof of Lemma \ref{second der comp} and	using Lemma \ref{perturbations}, it follows that 
	\begin{align*}  
		\int_{S(\lambda)}g(a,\nu)\,\mathrm{d}\mu=\frac12\int_{S(\lambda)} \bar g(a,\bar\nu)\,\bar{\operatorname{tr}}\,\sigma\,\mathrm{d}\bar\mu+o(1)+o(\lambda\,|\xi(\lambda)|).
	\end{align*} 
	Recall from \eqref{MC expansion} that $\lambda\,H(\Sigma(\lambda))=(n-1)+O(\lambda^{-\tau})$. Using \eqref{section 4 RT}, we find
	\begin{equation*}  
		\int_{S(\lambda)} \bar g(a,\bar\nu)\,\bar{\operatorname{tr}}\,\hat \sigma\,\mathrm{d}\bar\mu=O(\lambda^{n-2-\tau}).
	\end{equation*} 
	Similarly, using Taylor's theorem, we obtain
	\begin{equation*}  
		\int_{S(\lambda)} \bar g(a,\bar\nu)\,\bar{\operatorname{tr}}\,\tilde \sigma\,\mathrm{d}\bar\mu= \int_{S_{\lambda}(0)} \bar g(a,\bar\nu)\,\bar{\operatorname{tr} }\,\tilde\sigma\,\mathrm{d}\bar\mu+O(\lambda^{n-1-\tau}\,|\xi(\lambda)|).
	\end{equation*} 
By symmetry,
$$
\int_{S_{\lambda}(0)} \bar g(a,\bar\nu)\,\bar{\operatorname{tr}}\,\tilde \sigma\,\mathrm{d}\bar\mu=0.
$$ 
	Assembling these estimates, the assertion follows.
\end{proof}
\begin{proof}[Proof of Theorem \ref{com thm}]
	Combining Lemma \ref{second der comp} and Lemma \ref{third der comp}, we have 
	$$
	\frac12\,\lambda\, \int_{S{(\lambda)}}\bar D_a\bar{\operatorname{tr}}\,\sigma-(\bar D_a\sigma)(\bar\nu,\bar\nu)-(n-1)\,\lambda^{-1}\,\bar g(a,\bar\nu)\,\bar{\operatorname{tr}}\,\sigma\,\mathrm{d}\bar\mu=o(1)+o(\lambda\,|\xi(\lambda)|).
	$$
	Using Lemma \ref{ce int by parts rema}, we have
	\begin{align*} 
	&\lambda\, \int_{S{(\lambda)}}\bar D_a\bar{\operatorname{tr}}\,\sigma-(\bar D_a\sigma)(\bar\nu,\bar\nu)-(n-1)\,\lambda^{-1}\,\bar g(a,\bar\nu)\,\bar{\operatorname{tr}}\,\sigma\,\mathrm{d}\bar\mu\\&\qquad=	\bar g(a,\lambda\,\xi(\lambda)) \int_{S(\lambda)} (\bar{\operatorname{div}}\,\sigma)(\bar \nu)-\bar D_{\bar\nu}\bar{\operatorname{tr}}\,\sigma\,\mathrm{d}\bar\mu\\
	&\qquad \qquad\quad\,\,\,\, + \int_{S(\lambda)}\bar g(a,x)\,\big[\bar D_{\bar\nu}\bar{\operatorname{tr}}\,\sigma-(\bar{\operatorname{div}}\,\sigma)(\bar\nu)\big]+\sigma(\bar\nu,a)-\bar g(a,\bar\nu)\,\bar{\operatorname{tr}}\,\sigma\,\mathrm{d}\bar\mu
	\end{align*} 
Moreover, by \eqref{scalar curvature}, using that $R=o(|x|^{-n})$, 
\begin{align*}
	& \bar{\operatorname{div}}\left(\bar g(a,x-\lambda\,\xi(\lambda))\,[\bar D\,\bar{\operatorname{tr}}\,\sigma-(\bar{\operatorname{div}}\,\sigma)]+ [\sigma(a,\,\cdot\,)-\bar g(a,\,\cdot\,)\,\bar{\operatorname{tr}}\,\sigma]\right) =o(|x|^{1-n})+o(\lambda\,|x|^{-n}).
\end{align*} 
Using the divergence theorem, we now obtain the improved estimate 
	\begin{equation}  \label{improved estimate}
		\begin{aligned}
			&\bar g(a,\lambda\,\xi(\lambda)) \int_{S_{\lambda}(0)} (\bar{\operatorname{div}}\,\sigma)(\bar \nu)-\bar D_{\bar\nu}\bar{\operatorname{tr}}\,\sigma\,\mathrm{d}\bar\mu\\
			&\qquad\quad\,\,\,\, + \int_{S_{\lambda}(0)}\bar g(a,x)\,\big[\bar D_{\bar\nu}\operatorname{tr}\sigma-(\bar{\operatorname{div}}\,\sigma)(\bar\nu)\big]+\sigma(\bar\nu,a)-\bar g(a,\bar\nu)\,\bar{\operatorname{tr}}\,\sigma\,\mathrm{d}\bar\mu
			\\&\qquad=\,o(1)+o(\lambda\,|\xi(\lambda)|)+o\bigg(\int_{B_{\lambda}(\lambda\,\xi)\triangle B_{\lambda}(0)}|x|^{1-n}+\lambda\,|x|^{-n}\,\mathrm{d}\bar v\bigg)
			\\&\qquad=o(1)+o(\lambda\,|\xi(\lambda)|);
		\end{aligned}
	\end{equation}
cp.~\eqref{compare}.
	Since $(M,g)$ satisfies the $C^2$-Regge-Teitelboim conditions, the Hamiltonian center of mass $C=(C^1,\ldots,\,C^n)$ exists; see Proposition \ref{existence COM}.
	Using \eqref{mass} and \eqref{center of mass}, we find that
	\begin{align}\label{first bary} 
		\lambda\,\xi(\lambda)=C+o(1)+o(\lambda\,|\xi(\lambda)|).
	\end{align} 
	In particular, \begin{align} \lambda\,\xi(\lambda)=O(1) \label{barycenter est}\end{align} and in fact $\lambda\,\xi(\lambda)=C+o(1)$. Using \eqref{section 4 RT} and \eqref{u odd assumption}, we find that  
		\begin{align*} 
		|\Sigma(\lambda)|^{-1}\,\int_{\Sigma(\lambda)}x\,\mathrm{d}\mu=	|\Sigma(\lambda)|^{-1}\,\int_{\Sigma(\lambda)}x\,\mathrm{d}\tilde \mu+o(1)=|\Sigma(\lambda)|^{-1}\,\int_{\Sigma_{\xi(\lambda),\lambda}(u^e)}x\,\mathrm{d}\tilde \mu+o(1).
	\end{align*} 
	Using \eqref{section 4 decay} and \eqref{barycenter est} and Taylor's theorem, we obtain
	\begin{align*} 
|\Sigma(\lambda)|^{-1}\,\int_{\Sigma_{\xi(\lambda),\lambda}(u^e)}x\,\mathrm{d}\tilde \mu=|\Sigma(\lambda)|^{-1}\,\int_{\Sigma_{0,\lambda}(u^e)}\lambda\,\xi(\lambda)+x\,\mathrm{d}\tilde \mu+o(1).
	\end{align*} 
By symmetry, 
\begin{align*}
|\Sigma(\lambda)|^{-1}\,\int_{\Sigma_{0,\lambda}(u^e)}\lambda\,\xi(\lambda)+\lambda\,x\,\mathrm{d}\tilde \mu&=|\Sigma(\lambda)|^{-1}\,\int_{S_{0,\lambda}}\lambda\,\xi(\lambda)\,\mathrm{d}\tilde \mu\\&=|\Sigma(\lambda)|^{-1}\,\int_{S_{0,\lambda}}\lambda\,\xi(\lambda)\,\mathrm{d} \mu+o(1)\\&=\lambda\,\xi(\lambda)+o(1).
\end{align*} 
	The assertion follows from these estimates. 
\end{proof} 
	\section{Curvature estimates for stable CMC spheres} \label{curv est section}
In this section, we discuss the curvature estimates for stable constant mean curvature spheres in asymptotically flat Riemannian three-manifolds that are needed in Section \ref{uniqueness section} and in Section \ref{uniqueness section 2 }. As in \cite{chodosh2017global} and differently from, e.g.,~\cite{Ma}, we rely on a refined $L^2$-estimate of the tracefree second fundamental form obtained from combining the Christodoulou-Yau estimate \eqref{CY estimate}  with the global estimate \eqref{hi estimate r geq 0} on the Hawking mass found by G.~Huisken and T.~Ilmanen \cite{HI}. \\ \indent  
Throughout this section, we assume that $g$ is a  Riemannian metric on $\mathbb{R}^3$ whose scalar curvature is integrable and that there is  $\tau\in(1/2,1]$ with 
\begin{align} \label{section curv est decay} 
	g=\bar g+\sigma \qquad \text{where} \qquad 
	\partial_J \sigma=O\left(|x|^{-\tau-|J|}\right)
\end{align} 
for every multi-index $J$ with $|J|\leq 2$. \\ \indent 
Let $\Sigma\subset \mathbb{R}^3$ be a stable constant mean curvature sphere. Recall from \cite[p.~13]{ChristodoulouYau} that
\begin{align}\label{CY estimate}
	\frac23\,	\int_{\Sigma}|\hcirc|^2+R\,\mathrm{d}\mu\leq 16\,\pi-\int_{\Sigma}H^2\,\mathrm{d}\mu.
\end{align}
We  need the following  decay estimate.
\begin{lem}[{\cite[Lemma 5.2]{HuiskenYau}}] 
	Let $\Sigma\subset\mathbb{R}^3\setminus\{0\}$ be a closed surface and $q>2$. There is a constant $c(q)>0$ such that \label{hy integral lemma}
	\begin{align*}
		\rho(\Sigma)^{q-2}\,\int_{\Sigma}|x|^{-q}\,\mathrm{d}\bar\mu\leq c(q)\,\int_{\Sigma}\bar H^2\,\mathrm{d}\bar\mu.
	\end{align*}
\end{lem}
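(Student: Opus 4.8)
The plan is to establish the pointwise-to-integral inequality
$$
\rho(\Sigma)^{q-2}\,\int_{\Sigma}|x|^{-q}\,\mathrm{d}\bar\mu\leq c(q)\,\int_{\Sigma}\bar H^2\,\mathrm{d}\bar\mu
$$
by combining a Sobolev-type inequality on $\Sigma$ with a judicious choice of test function that encodes the decay of $|x|^{-q}$. The natural tool here is the Michael–Simon Sobolev inequality, which for a closed surface $\Sigma\subset\mathbb{R}^3$ and a nonnegative function $f\in C^1(\Sigma)$ reads
$$
\Big(\int_{\Sigma}f^2\,\mathrm{d}\bar\mu\Big)^{1/2}\leq c\,\int_{\Sigma}\big(|\bar\nabla^\Sigma f|+f\,|\bar H|\big)\,\mathrm{d}\bar\mu,
$$
where $\bar\nabla^\Sigma$ denotes the tangential gradient. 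The idea is to apply this with a test function built from a power of $|x|$, exploiting that $\rho(\Sigma)\leq|x|$ everywhere on $\Sigma$ (since $B_{\rho(\Sigma)}\cap\Sigma=\emptyset$ by the definition of the inner radius), so that negative powers of $|x|$ are uniformly bounded by the corresponding power of $\rho(\Sigma)$.

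First I would set $f=|x|^{-\alpha}$ on $\Sigma$ for an exponent $\alpha>0$ to be chosen, and compute the tangential gradient, using $|\bar\nabla^\Sigma|x|^{-\alpha}|\leq\alpha\,|x|^{-\alpha-1}$. Substituting into the Michael–Simon inequality and squaring gives a bound of the form
$$
\int_{\Sigma}|x|^{-2\alpha}\,\mathrm{d}\bar\mu\leq c\,\Big(\int_{\Sigma}\big(\alpha\,|x|^{-\alpha-1}+|x|^{-\alpha}\,|\bar H|\big)\,\mathrm{d}\bar\mu\Big)^2.
$$
Next I would apply the Cauchy–Schwarz inequality to the right-hand side to separate out the factor $\int_{\Sigma}\bar H^2\,\mathrm{d}\bar\mu$, producing terms like $\big(\int_\Sigma|x|^{-2\alpha}\big)^{1/2}\big(\int_\Sigma\bar H^2\big)^{1/2}$ and lower-order integrals of $|x|^{-2\alpha-2}$. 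The exponent $\alpha$ is chosen so that $2\alpha=q$, i.e.\ $\alpha=q/2$; the hypothesis $q>2$ guarantees $\alpha>1$, which is what makes the gradient term $|x|^{-\alpha-1}$ one power of $|x|^{-1}$ better decaying and hence controllable by $\rho(\Sigma)^{-1}$ times the main integral. The uniform bound $|x|^{-1}\leq\rho(\Sigma)^{-1}$ on $\Sigma$ then lets me absorb the extra negative power into the correct power of the inner radius.

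The main obstacle, and the step requiring care, is the bookkeeping that turns the squared right-hand side into the clean product $\rho(\Sigma)^{-(q-2)}\int_\Sigma\bar H^2\,\mathrm{d}\bar\mu$ after absorbing the self-referential term $\int_\Sigma|x|^{-q}\,\mathrm{d}\bar\mu$ that appears on both sides. Concretely, I expect terms of the form $c\,\rho(\Sigma)^{-(q-2)}\big(\int_\Sigma|x|^{-q}\,\mathrm{d}\bar\mu\big)^{1/2}\big(\int_\Sigma\bar H^2\,\mathrm{d}\bar\mu\big)^{1/2}$ together with a genuinely lower-order term $c\,\rho(\Sigma)^{-(q-2)}\int_\Sigma|x|^{-q}\,\mathrm{d}\bar\mu$ coming from the pure-gradient contribution. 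Setting $I=\int_\Sigma|x|^{-q}\,\mathrm{d}\bar\mu$ and $J=\int_\Sigma\bar H^2\,\mathrm{d}\bar\mu$, the inequality takes the schematic shape $I\leq c\,\rho(\Sigma)^{-(q-2)}\,I^{1/2}J^{1/2}+c\,\rho(\Sigma)^{-(q-2)}\,I$; for $\rho(\Sigma)$ large the last term is a small multiple of $I$ and can be absorbed on the left, after which dividing by $I^{1/2}$ and squaring yields exactly $\rho(\Sigma)^{q-2}\,I\leq c(q)\,J$. I would also verify that the case of small $\rho(\Sigma)$ (bounded below by $1$ in the definition of the inner radius) causes no difficulty, since there $\rho(\Sigma)^{q-2}$ is bounded and the estimate follows from the crude bound $|x|^{-q}\leq 1$ combined with a standard lower bound on $\int_\Sigma\bar H^2\,\mathrm{d}\bar\mu$ for a closed surface.
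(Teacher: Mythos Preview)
Your route via the Michael--Simon inequality with $f=|x|^{-q/2}$ does not close, and the schematic inequality $I\le c\,\rho^{-(q-2)}I^{1/2}J^{1/2}+c\,\rho^{-(q-2)}I$ you arrive at cannot hold. After Michael--Simon and Cauchy--Schwarz one actually obtains, before any squaring,
\[
I^{1/2}\ \le\ c\,\tfrac{q}{2}\int_\Sigma |x|^{-q/2-1}\,\mathrm{d}\bar\mu \ +\ c\,I^{1/2}J^{1/2},
\]
and the coefficient in front of $I^{1/2}J^{1/2}$ carries \emph{no} power of $\rho(\Sigma)$, so that term cannot be absorbed unless $J$ happens to be small. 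More decisively, the target inequality is scale-invariant (under $\Sigma\mapsto\lambda\Sigma$ one has $\rho\mapsto\lambda\rho$, $I\mapsto\lambda^{2-q}I$, $J\mapsto J$), so one may always normalize $\rho=1$, and then ``absorption for $\rho$ large'' is unavailable in principle. On the centered sphere $\Sigma=S_R(0)$ the gradient integral $\int_\Sigma|x|^{-q/2-1}\,\mathrm{d}\bar\mu=4\pi R^{1-q/2}$ has exactly the same order as $I^{1/2}=(4\pi)^{1/2}R^{1-q/2}$, confirming that this term is not lower order; in fact your schematic would imply the stronger bound $\rho^{2(q-2)}I\le cJ$, which fails on this example since $\rho^{2(q-2)}I=4\pi R^{q-2}\to\infty$ while $J=16\pi$. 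Your fallback for small $\rho$ via $|x|^{-q}\le 1$ likewise fails, since it would require $|\Sigma|\le cJ$, which is false for large spheres.

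The paper does not give its own proof here but cites Huisken--Yau, whose argument is different: it combines Simon's area-growth estimate $r^{-2}\,|\Sigma\cap B_r(0)|\le c\int_\Sigma\bar H^2\,\mathrm{d}\bar\mu$ (a consequence of the monotonicity formula for $|\Sigma\cap B_r|/r^2$) with the layer-cake representation
\[
\int_\Sigma|x|^{-q}\,\mathrm{d}\bar\mu\ =\ q\int_{\rho(\Sigma)}^\infty s^{-q-1}\,|\Sigma\cap B_s(0)|\,\mathrm{d}s,
\]
and then integrates in $s$ using $q>2$ to obtain the factor $\rho(\Sigma)^{2-q}$ directly.
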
 
Let $\{\Sigma_i\}_{i=1}^\infty$ be a sequence of stable constant mean curvature spheres $\Sigma_i\subset \mathbb{R}^3$ with
\begin{align*}
	\lim_{i\to\infty}\rho(\Sigma_i)=\infty\qquad\text{and}\qquad \rho(\Sigma_i)=O(\lambda(\Sigma_i)).
\end{align*} 
We recall the following two results.
\begin{lem}[{\cite[Lemma 2.3]{Ma}}]
	There holds, as $i\to\infty$, \label{large blowdown}
	\begin{align*}
		\int_{\Sigma_i}\bar H^2\,\mathrm{d}\bar\mu=16\,\pi+o(1).
	\end{align*} 
\end{lem}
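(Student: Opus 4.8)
The plan is to combine the Christodoulou--Yau estimate \eqref{CY estimate}, which is where stability enters, with the Gauss--Bonnet theorem, and then to pass from the intrinsic quantities of $(\Sigma_i,g)$ to their Euclidean counterparts. Throughout I abbreviate $\tilde W_i=\int_{\Sigma_i}H^2\,\mathrm{d}\mu$ and $W_i=\int_{\Sigma_i}\bar H^2\,\mathrm{d}\bar\mu$, and I use that $|x|\geq\rho(\Sigma_i)\to\infty$ on $\Sigma_i$.

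First I would use the Gauss equation together with $|h|^2=|\hcirc|^2+\tfrac12\,H^2$ to write the intrinsic Gauss curvature of $\Sigma_i$ as $K=K_M+\tfrac14\,H^2-\tfrac12\,|\hcirc|^2$, where $K_M$ denotes the ambient sectional curvature along $T\Sigma_i$. Since $\Sigma_i$ is a topological sphere, Gauss--Bonnet gives
\begin{equation*}
\tfrac14\,\tilde W_i-\tfrac12\int_{\Sigma_i}|\hcirc|^2\,\mathrm{d}\mu=4\,\pi-\int_{\Sigma_i}K_M\,\mathrm{d}\mu.
\end{equation*}
Because $g$ is asymptotically flat of rate $\tau$, both $K_M$ and the scalar curvature $R$ are $O(|x|^{-2-\tau})$; hence, by Lemma \ref{hy integral lemma} with $q=2+\tau$ together with $\mathrm{d}\mu=(1+O(|x|^{-\tau}))\,\mathrm{d}\bar\mu$, the integrals $\int_{\Sigma_i}K_M\,\mathrm{d}\mu$ and $\int_{\Sigma_i}R\,\mathrm{d}\mu$ are $O(\rho(\Sigma_i)^{-\tau}\,W_i)$. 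Feeding the Gauss--Bonnet identity into \eqref{CY estimate} to eliminate $\int_{\Sigma_i}|\hcirc|^2\,\mathrm{d}\mu$ then yields $\tfrac43\,\tilde W_i\leq \tfrac{64\,\pi}{3}+O(\rho(\Sigma_i)^{-\tau}\,W_i)$, that is, $\tilde W_i\leq 16\,\pi+O(\rho(\Sigma_i)^{-\tau}\,W_i)$.

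Next I would convert between $\tilde W_i$ and $W_i$. Using Lemma \ref{basic h estimate} to write $H=\bar H+O(|x|^{-\tau})\,\bar H+O(|x|^{-1-\tau})$ and $\mathrm{d}\mu=(1+O(|x|^{-\tau}))\,\mathrm{d}\bar\mu$, the difference $\tilde W_i-W_i$ is a sum of integrals of $|x|^{-\tau}\,\bar H^2$, of $|x|^{-1-\tau}\,\bar H$, and of $|x|^{-2-2\,\tau}$. The first is bounded by $\rho(\Sigma_i)^{-\tau}\,W_i$ since $|x|\geq\rho(\Sigma_i)$, the third by $\rho(\Sigma_i)^{-2\,\tau}\,W_i$ by Lemma \ref{hy integral lemma}, and the second by $\rho(\Sigma_i)^{-\tau}\,W_i$ after Cauchy--Schwarz; thus $\tilde W_i=W_i+O(\rho(\Sigma_i)^{-\tau}\,W_i)$. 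Combining this with the bound on $\tilde W_i$ gives $(1-O(\rho(\Sigma_i)^{-\tau}))\,W_i\leq 16\,\pi$, so that $W_i$, and hence $\tilde W_i$, is uniformly bounded once $i$ is large, because $\rho(\Sigma_i)\to\infty$.

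Finally, with $W_i$ bounded, every error term above is genuinely $o(1)$. Gauss--Bonnet and $|\hcirc|^2\geq 0$ give $\tilde W_i\geq 16\,\pi+o(1)$, while the combination of Gauss--Bonnet with \eqref{CY estimate} gives $\tilde W_i\leq 16\,\pi+o(1)$; hence $\tilde W_i=16\,\pi+o(1)$, and incidentally $\int_{\Sigma_i}|\hcirc|^2\,\mathrm{d}\mu=o(1)$. The conversion estimate then promotes this to $W_i=16\,\pi+o(1)$, which is the claim. The point requiring care is the apparent circularity of the error bounds, all of which are proportional to the unknown $W_i$; this is resolved by the absorbing step in the previous paragraph, where the smallness of $\rho(\Sigma_i)^{-\tau}$ lets one move the $O(\rho(\Sigma_i)^{-\tau}\,W_i)$ term to the left-hand side.
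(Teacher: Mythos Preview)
Your overall strategy---combining the Christodoulou--Yau estimate \eqref{CY estimate} with Gauss--Bonnet and Lemma \ref{hy integral lemma}, then absorbing the error terms proportional to $W_i$ using $\rho(\Sigma_i)\to\infty$---is correct and is precisely the argument the paper defers to \cite{Ma}. Two points need fixing.

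First, the reference to Lemma \ref{basic h estimate} is misplaced: that lemma estimates the constant value $H(\Sigma_i)$ in terms of $\lambda(\Sigma_i)$, requires the extra hypothesis $R\geq 0$ or $R=O(|x|^{-5/2-\tau})$, and is proved \emph{after} the present lemma. What you actually need is the elementary comparison $H-\bar H=O(|x|^{-\tau})\,|\bar h|+O(|x|^{-1-\tau})$, which follows directly from the decay \eqref{asymptotically flat}; compare the last line of the proof of Proposition \ref{prop CE}.

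Second, your pointwise estimate $H=\bar H+O(|x|^{-\tau})\,\bar H+O(|x|^{-1-\tau})$ tacitly replaces $|\bar h|$ by $\bar H$, which is not justified since $|\hbarcirc|$ is not yet controlled. This is harmless at the integrated level: Euclidean Gauss--Bonnet on the sphere $\Sigma_i$ gives $\int_{\Sigma_i}|\bar h|^2\,\mathrm{d}\bar\mu=W_i-8\,\pi$, so Cauchy--Schwarz still yields $\tilde W_i-W_i=O(\rho(\Sigma_i)^{-\tau}\,W_i)$ as you claim, and the absorption and squeeze arguments go through. For the lower bound you may alternatively invoke the classical Willmore inequality $W_i\geq 16\,\pi$ for closed surfaces in $\mathbb{R}^3$, which gives it without error term.
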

\begin{proof}
	This follows from \eqref{CY estimate} and Lemma \ref{hy integral lemma}; see \cite{Ma} for details.
\end{proof}

\begin{lem}[cp.~{\cite[Proposition D.1]{chodosh2017global}}]
	Assume that $R\geq 0$. Then, as $i\to\infty$, \label{CE estimate}
	\begin{align} \label{hi estimate r geq 0}
		16\,\pi-\int_{\Sigma_i} H^2\,\mathrm{d}\mu \leq O(\lambda(\Sigma_i)^{-1}).
	\end{align}
Alternatively,	assume that $R=O(|x|^{-5/2-\tau})$ as $x\to\infty$. Then 
	\begin{align} \label{hi estimate}
		16\,\pi-\int_{\Sigma_i} H^2\,\mathrm{d}\mu \leq O(\rho(\Sigma_i)^{-1/2-\tau})+O(\lambda(\Sigma_i)^{-1}).
	\end{align}
\end{lem}
\begin{proof}
	The estimate \eqref{hi estimate r geq 0} is proved in {\cite[Proposition D.1]{chodosh2017global}}. \\ \indent To obtain \eqref{hi estimate}, we adapt the argument in {\cite[Proposition D.1]{chodosh2017global}} as follows. \\ \indent Let $\Sigma_i'\subset \mathbb{R}^3$ be the minimizing hull of $\Sigma_i$;  see \cite[p.~371]{HI}. Using \cite[(1.15)]{HI}, we have
	\begin{align} \label{mh comparison}
		16\,\pi-\int_{\Sigma_i} H^2\,\mathrm{d}\mu\leq 16\,\pi-\int_{\Sigma'_i}H^2\,\mathrm{d}\mu.
	\end{align}
	Moreover, there holds
	\begin{align} \label{lambda mh comp}
		\lambda(\Sigma_i)=(1+o(1))\,\lambda(\Sigma_i'),
	\end{align}
	see \cite[(26)]{chodosh2017global}, while, clearly, 
	\begin{align} \label{rho mh comp} 
		\rho(\Sigma'_i)\geq \rho(\Sigma_i).
	\end{align} Let $u_i\in C^{1,1}(\mathbb{R}^3)$ be the proper weak solution of inverse mean curvature flow with initial data $\Sigma_i'$ in the sense of \cite[p.~365]{HI}. Using the growth formula \cite[(5.22)]{HI} for the right-hand side of  \eqref{mh comparison}, the co-area formula, and arguing as in \cite[Appendix H]{CESH}, we find that 
	\begin{align} \label{willmore mh estimate}
		16\,\pi-\int_{\Sigma'_i}H^2\,\mathrm{d}\mu\leq O(\lambda(\Sigma_i')^{-1})+O\bigg(\int_{\mathbb{R}^3\setminus B_{\rho(\Sigma_i')}(0)}|Du_i|\,R\,\mathrm{d}\mu\bigg).
	\end{align}
	By \eqref{CY estimate} and Lemma \ref{hy integral lemma}, we have $H(\Sigma_i)=O(\lambda(\Sigma_i)^{-1})$. In conjunction with \cite[(3.1)]{HI}, \cite[(1.15)]{HI}, and \eqref{section curv est decay}, we find that \begin{align} \label{gradient mh est}
		|Du_i|=O(\lambda(\Sigma_i)^{-1})+O(|x|^{-1})
	\end{align}
	on $\mathbb{R}^3$, uniformly as $i\to\infty$. Assembling \eqref{mh comparison}-\eqref{gradient mh est} and using that $R=(|x|^{-5/2-\tau})$ as $x\to\infty$, the assertion follows.
\end{proof} 

\begin{lem} \label{basic h estimate}
	Assume that $R\geq 0$. Then, as $i\to\infty$,
	\begin{align*} 
		H(\Sigma_i)=2\,\lambda(\Sigma_i)^{-1}+O(\lambda(\Sigma_i)^{-2}).
	\end{align*} 
Alternatively, assume that	$R=O(|x|^{-5/2-\tau})$ as $x\to\infty$. Then
	\begin{align*} 
		H(\Sigma_i)=2\,\lambda(\Sigma_i)^{-1}+O(\rho(\Sigma_i)^{-1/2-\tau}\,\lambda(\Sigma_i)^{-1})+O(\lambda(\Sigma_i)^{-2}).
	\end{align*} 
\end{lem} 
\begin{proof}
	If $R\geq 0$, we find, using \eqref{CY estimate} and Lemma \ref{CE estimate}, that
	$$
	16\,\pi-\int_{\Sigma_i}H^2\,\mathrm{d}\mu=O(\lambda(\Sigma_i)^{-1}).
	$$
	Alternatively, if $R=O(|x|^{-5/2-\tau})$, we use \eqref{CY estimate},  Lemma \ref{hy integral lemma}, and Lemma \ref{CE estimate} to find that
	$$
	16\,\pi-\int_{\Sigma_i}H^2\,\mathrm{d}\mu=O(\rho(\Sigma_i)^{-1/2-\tau})+O(\lambda(\Sigma_i)^{-1}).
	$$
	Moreover, we have
	$$
\int_{\Sigma_i}H^2\,\mathrm{d}\mu=4\,\pi\,H(\Sigma_i)^2\,\lambda(\Sigma_i)^2
	$$
	for all $i$.  Note that $H(\Sigma_i)>0$ by the maximum principle. The assertion follows.
\end{proof} 
\begin{lem}
	Assume that $R\geq 0$. Then, as $i\to\infty$, 
	\begin{align*}
		\int_{\Sigma_i} |\hcirc|^2\,\mathrm{d}\mu=O(\lambda(\Sigma_i)^{-1}).
	\end{align*}
Alternatively, assume that $R=O(|x|^{-5/2-\tau})$ as $x\to\infty$. Then \label{hcirc l2 estimate}  
	\begin{align*}
		\int_{\Sigma_i} |\hcirc|^2\,\mathrm{d}\mu=O(\rho(\Sigma_i)^{-1/2-\tau})+O(\lambda(\Sigma_i)^{-1}).
	\end{align*}
\end{lem}
\begin{proof}
	This follows from \eqref{CY estimate}, Lemma \ref{CE estimate}, and Lemma \ref{hy integral lemma}.
\end{proof} 
\begin{prop}
	Assume that $R\geq 0$. Then, as $i\to\infty$, \label{h circ estimate}
	\begin{equation}\label{hcirc r pos}\begin{aligned} 
			\hcirc&=O(|x|^{-1-\tau})+O(|x|^{-1}\,\lambda(\Sigma_i)^{-1/2}),\\
h&=	O(\lambda(\Sigma_i)^{-1})+O(|x|^{-1-\tau})+O(|x|^{-1}\,\lambda(\Sigma_i)^{-1/2}).
\end{aligned}\end{equation}
Alternatively, assume that	$R=O(|x|^{-5/2-\tau})$ as $x\to\infty$. Then \label{prop CE}
	\begin{equation}\begin{aligned} \label{hcirc}
			\hcirc&=O(|x|^{-1}\,\rho(\Sigma_i)^{-1/4-\tau/2})+O(|x|^{-1}\,\lambda(\Sigma_i)^{-1/2}),\\
			h&=O(\lambda(\Sigma_i)^{-1})+\,O(|x|^{-1}\,\rho(\Sigma_i)^{-1/4-\tau/2})+O(|x|^{-1}\,\lambda(\Sigma_i)^{-1/2}).
	\end{aligned}\end{equation}
	In either case, 	\begin{align} \label{H estimate}
		\bar H&=H+O(|x|^{-1-\tau}).
	\end{align}
\end{prop}
\begin{proof} As shown in \cite[Theorem 2.7]{Ma}, the Simons' identity and the Sobolev inequality imply that
	\begin{equation*}
		\begin{aligned}
			|\hcirc|^2=\,&O(|x|^{-2-2\,\tau})+O\bigg(|x|^{-2}\,\int_{\Sigma_i}|\hcirc|^2\,\mathrm{d}\mu \bigg)
		\end{aligned}
	\end{equation*}
	as $i\to\infty$.  In conjunction with Lemma \ref{hcirc l2 estimate}, we obtain \eqref{hcirc r pos} and \eqref{hcirc}. Now, \eqref{H estimate} follows from the estimate
	$
	\bar H=H+O(|x|^{-\tau}\,|h|)+O(|x|^{-1-\tau}).
	$
\end{proof} 
\begin{rema}
In \cite[Theorem 2.7]{Ma}, 	\eqref{section curv est decay} is required to hold for every multi-index $J$ with $|J|\leq 3$. The arguments in \cite[Proposition 3.3]{Metzger} show that it is sufficient to require \eqref{section curv est decay} for every multi-index $J$ with $|J|\leq 2$.
\end{rema}
\begin{rema}
	It follows from Proposition \ref{prop CE} that, if $R\geq 0$, then, as $i\to\infty,$  \label{h circ estimate bar}
	\begin{equation*}\begin{aligned} 
			&\hbarcirc=O(|x|^{-1-\tau})+O(|x|^{-1}\,\lambda(\Sigma_i)^{-1/2}),\\
			&\bar h=O(\lambda(\Sigma_i)^{-1})+O(|x|^{-1-\tau})+O(|x|^{-1}\,\lambda(\Sigma_i)^{-1/2})
	\end{aligned}\end{equation*}
while,	if $R=O(|x|^{-5/2-\tau})$ as $x\to\infty$, 
	\begin{equation*}\begin{aligned}
			\hbarcirc&=O(|x|^{-1}\,\rho(\Sigma_i)^{-1/4-\tau/2})+O(|x|^{-1}\,\lambda(\Sigma_i)^{-1/2}),\\
\bar h&=O(\lambda(\Sigma_i)^{-1})+O(|x|^{-1}\,\rho(\Sigma_i)^{-1/4-\tau/2})+O(|x|^{-1}\,\lambda(\Sigma_i)^{-1/2}).			
	\end{aligned}\end{equation*} 
\end{rema}

\section{Proof of Theorem \ref{uniqueness thm}} \label{sec:uniqueness}
In this section, we assume that $g$ is a Riemannian metric on $\mathbb{R}^3$ whose scalar curvature is integrable with $R=o(|x|^{-3})$ as $x\to\infty$ and that there is $\tau\in(1/2,1]$ with  
\begin{align*} 
g=\bar g+\sigma \qquad \text{where} \qquad 
\partial_J \sigma=O\left(|x|^{-\tau-|J|}\right)
\end{align*} 
for every multi-index $J$ with $|J|\leq 3$.
\\ \indent  Let $\{\Sigma_i\}_{i=1}^\infty$ be a sequence of stable constant mean \label{uniqueness section} curvature spheres $\Sigma_i\subset \mathbb{R}^3$ that enclose $B_1(0)$ with 
\begin{align} \label{divergence}
\lim_{i\to\infty}\rho(\Sigma_i)=\infty
\end{align} 
and
\begin{align} \label{slow divergence}
\rho(\Sigma_i)=o(\lambda(\Sigma_i))
\end{align} 
as $i\to\infty$. By \cite[Lemma 1.1]{Simon},  Lemma \ref{large blowdown}, and \eqref{slow divergence},
\begin{align}
\label{x vs lambda} \sup_{x\in\Sigma_i}|x|=O(\lambda(\Sigma_i)).
\end{align}
\indent Let $x_i\in \Sigma_i\cap S_{\rho(\Sigma_i)}(0).$ Passing to a subsequence, we may assume that there is $\xi\in\mathbb{R}^3$ with $|\xi|=1$ and
\begin{align} \label{xi limit} 
\lim_{i\to\infty} |x_i|^{-1}\,x_i=-\xi. 
\end{align}

\begin{lem}[{cp.~\cite[Corollary 4.7]{chodosh2017global}}]
 \label{distance comp} 
The surfaces $\tfrac12\,H(\Sigma_i)\,\Sigma_i$  converge to $S_1(\xi)$ in $C^1$ in $\mathbb{R}^3$.
\end{lem} 
\begin{proof}
	We may assume that $\xi=e_3$. Let $a_i\in\mathbb{R}^3$ with $|a_i|=1$, $a_i\perp x_i$, and $a_i\perp e_3$. Let $R_i\in SO(3)$ be the unique rotation with $R(a_i)=a_i$ and $R(x_i)=-|x_i|\,e_3$.  
 By \eqref{xi limit}, $\lim_{i\to\infty}R_i=\operatorname{Id}$. \\ \indent   Let $\gamma_i>0$ be the largest radius such that there is a smooth function  $u_i:\{y\in\mathbb{R}^2:|y|\leq \gamma_i\}\to\mathbb{R}$ with 
\begin{equation} 
\label{gradient estimate}
\begin{aligned}
\circ\qquad &|(\bar\nabla u_i)|_y|\leq1,\qquad\qquad\qquad\qquad\qquad\qquad\qquad\qquad\qquad\qquad \\[-3.5pt]
\circ\qquad &(y,\rho(\Sigma_i)+u_i(y))\in R_i(\Sigma_i)
\end{aligned} 
\end{equation}
for all $y\in\mathbb{R}^2$ with $|y|\leq \gamma_i$.
  Clearly, $\gamma_i>0$ and $(\nabla u_i)|_0=0$. It follows that 
\begin{align} \label{phi vs y} 
4\,|(y,\rho(\Sigma_i)+u_i(y))|\geq|y|+\rho(\Sigma_i)
\end{align} 
 and
\begin{align} \label{first h} 
|(\bar\nabla^2 u_i)|_y|\leq 8\,|\bar h(R_i(\Sigma_i))((y,\rho(\Sigma_i)+u_i(y)))|
\end{align} 
for every $y\in \mathbb{R}^2$ with $|y|\leq \gamma_i$. Moreover, Lemma \ref{basic h estimate}, \eqref{H estimate}, 
the improved curvature estimates  in Remark \ref{h circ estimate bar}, and \eqref{x vs lambda} imply that
\begin{equation} \label{improved grad estimate}  
\begin{aligned} 
\bar h (R_i(\Sigma_i))=&\,\frac12\,H(\Sigma_i)\,\bar g|_{R_i(\Sigma_i)}+O(|x|^{-1-\tau})+O(|x|^{-1}\,H(\Sigma_i)^{1/2})\\=\,&\frac12\,H(\Sigma_i)\,\bar g|_{R_i(\Sigma_i)}+O(|x|^{-3/2}).
\end{aligned} 
\end{equation} 
Combining \eqref{phi vs y}-\eqref{improved grad estimate},   we have
$$
|(\bar\nabla^2 u_i)|_y|\leq 4\,H(\Sigma_i)+O((|y|+\rho(\Sigma_i))^{-3/2}).
$$
Integrating, 
\begin{align} \label{grad estimate} 
|(\bar\nabla u_i)|_y|\leq 4\,H(\Sigma_i)\,|y|+O(\rho(\Sigma_i)^{-1/2}).
\end{align} 
It follows that $\tfrac12\,H(\Sigma_i)\,\gamma_i\geq \tfrac1{16}$ for all $i$ sufficiently large.  \eqref{grad estimate} also shows that, given $\varepsilon>0$, there is $\delta>0$ such that
$$
|\nu(R_i(\Sigma_i))-e_3|\leq \varepsilon\qquad\text{on}\qquad \big\{(y,\rho(\Sigma_i)+u_i(y)):y\in\mathbb{R}^2\text{ with }\tfrac12\, H_i\,|y|\leq \delta \big\}.
$$
Finally, Lemma \ref{large blowdown}, \cite[Theorem 3.1]{Simon}, and \eqref{improved grad estimate}  imply that $\tfrac12\,H(\Sigma_i)\,R_i(\Sigma_i)$ converges to $S_1(\tilde \xi)$ in $C^2$  locally  in $\mathbb{R}^3\setminus\{0\}$ where $\tilde \xi\in\mathbb{R}^3$; see also \cite[Lemma 3.1]{QingTian} and \cite[Proposition 2.2]{chodosh2017global}. 
The preceding argument shows that $\tilde \xi=\xi$ and that the convergence is in $C^1$ in $\mathbb{R}^3$.
\end{proof} 

\begin{proof}[Proof of Theorem \ref{uniqueness thm}]
Suppose, for a contradiction, that the conclusion of Theorem \ref{uniqueness thm} fails. Using Proposition \ref{local uniqueness result 1}, it follows that there is a sequence $\{\Sigma_i\}_{i=1}^\infty$ of stable constant mean curvature spheres $\Sigma_i\subset \mathbb{R}^3$ enclosing $B_1(0)$ that satisfies \eqref{divergence} and \eqref{slow divergence}.  \\ \indent  
	Let $a\in\mathbb{R}^3$ with $|a|=1$.
As in \cite[(5.13)]{HuiskenYau}, our starting point is the identity
\begin{align} \label{4 0}
\int_{\Sigma_i} H\,g(a,\nu)\,\mathrm{d}\mu=H(\Sigma_i)\,\int_{\Sigma_i}g(a,\nu)\,\mathrm{d}\mu.
\end{align} 
 On the one hand, Lemma \ref{q perturbations} implies that
$$
g(a,\nu)\,\mathrm{d}\mu=\left[\bar g(a,\bar\nu)+\frac12\,\bar g(a,\bar\nu)\,\bar{\operatorname{tr}}\,\sigma+O(|x|^{-2\,\tau})\right]\,\mathrm{d}\bar\mu
$$
uniformly on $\Sigma_i$  as $i\to\infty$. Moreover, by the divergence theorem, 
$$
\int_{\Sigma_i}\,\bar g(a,\bar\nu)\,\mathrm{d}\bar \mu=0.
$$
In conjunction with Lemma \ref{basic h estimate},   \eqref{H estimate}, and Lemma \ref{hy integral lemma}, we obtain
\begin{align} \label{4 1}
H(\Sigma_i)	\int_{\Sigma_i} g(a,\nu)\,{d}\mu=\frac12\,\int_{\Sigma_i}\bar H\,\bar g(a,\bar\nu)\,\bar{\operatorname{tr}}\,\sigma\,\mathrm{d}\bar \mu+o(1).
\end{align} 
On the other hand, by the first variation formula, we have
\begin{align} \label{4 1 5}
\int_{\Sigma_i} H\,g(a,\nu)\,\mathrm{d}\mu=	\int_{\Sigma_i} \operatorname{div}a-g(D_\nu a,\nu)\,\mathrm{d}\mu.
\end{align} 
By Lemma \ref{q perturbations}, 
$$
[\operatorname{div}a-g(D_\nu a,\nu)]\,\mathrm{d}\mu=\frac12 [\bar D_a\bar{\operatorname{tr}}\,\sigma -(\bar D_a\sigma)(\bar\nu,\bar\nu)+O(|x|^{-1-2\,\tau})]\,\mathrm{d}\bar\mu
$$
uniformly on $\Sigma_i$  as $i\to\infty$. In conjunction with Lemma \ref{hy integral lemma}, we find
\begin{align} \label{4 2}
\int_{\Sigma_i} \operatorname{div}a-g(D_\nu a,\nu)\,\mathrm{d}\mu=\frac12 \int_{\Sigma_i} \bar D_a\bar{\operatorname{tr}}\,\sigma -(\bar D_a\sigma)(\bar\nu,\bar\nu)\,\mathrm{d}\bar\mu+o(1).
\end{align} 
 Assembling \eqref{4 0}-\eqref{4 2} and using Lemma \ref{ce int by parts new} , we conclude that
\begin{equation*}
\begin{aligned}  
0&=\int_{\Sigma}\big[\bar D_{\bar\nu}\bar{\operatorname{tr}}\,\sigma- (\bar{\operatorname{div}}\,\sigma)(\bar\nu)\big]\,\bar g(a,\bar\nu)\,\mathrm{d}\bar\mu +\frac12\,\int_{\Sigma_i}\bar H\, [\sigma(a,\bar\nu)-\bar{\operatorname{tr}}\,\sigma\,\bar g(a,\bar\nu)]\,\mathrm{d}\bar\mu
\\&\qquad+O\bigg(\int_{\Sigma_i}|\hbarcirc|\,|\sigma|\,\mathrm{d}\bar \mu\bigg)+o(1).
\end{aligned}
\end{equation*}
Using also \eqref{H estimate} and Lemma \ref{hy integral lemma}, we obtain
\begin{equation}  \label{4 4}
	\begin{aligned} 
0&=\int_{\Sigma_i}[\bar D_{\bar\nu}\bar{\operatorname{tr}}\,\sigma-(\bar{\operatorname{div}}\,\sigma)(\bar\nu)]\,\bar g(a,\bar\nu)\,\mathrm{d}\bar\mu+\frac12\,H(\Sigma_i)\,\int_{\Sigma_i} \sigma(a,\bar\nu)-\bar{\operatorname{tr}}\,\sigma\,\bar g(a,\bar\nu)\,\mathrm{d}\bar\mu\\&\qquad+O\bigg(\int_{\Sigma_i}|\hbarcirc|\,|\sigma|\,\mathrm{d}\bar \mu\bigg)+o(1).
\end{aligned} 
\end{equation} 
\indent Note that 
\begin{align} \label{4 5}  
\int_{\Sigma_i}|\hbarcirc|\,|\sigma|\,\mathrm{d}\bar \mu=O\bigg(\int_{\Sigma_i}|\hbarcirc|\,|x|^{-\tau}\,\mathrm{d}\bar\mu\bigg)=o(1)
\end{align} 
by Remark \ref{h circ estimate bar} and Lemma \ref{hy integral lemma}.
Let $z_i\in \Sigma_i$ with $\bar\nu(z_i)=-|x_i|^{-1}\,x_i$ and  $$
\xi_i=\frac12\,H(\Sigma_i)\,z_i-\bar\nu(z_i).
$$ 
Note that $z_i$ is unique for sufficiently large $i$ by Lemma \ref{distance comp}. Moreover,  $2\,|z_i|\geq H(\Sigma_i)$  and
\begin{align}
|\xi_i|=1+o(1). \label{xi} 
\end{align}
We define the map $E_i:\Sigma_i\to\mathbb{R}^3$ by
$$
E_i=\bar\nu-\frac{1}{2}\,H(\Sigma_i)\,x+\xi_i.
$$
Using   Remark \ref{h circ estimate bar} and \eqref{H estimate},  we have
\begin{align} \label{der est} 
\bar\nabla E_i=O(|x|^{-3/2}).
\end{align} 
Integrating and using Lemma \ref{distance comp}, this gives
$
E_i=O(|x|^{-1/2}).
$
 In conjunction with \eqref{4 4}, \eqref{4 5}, and Lemma \ref{hy integral lemma},  we obtain 
\begin{equation*}  
\begin{aligned} 
0=\,&\int_{\Sigma_i}[\bar D_{\bar\nu}\bar{\operatorname{tr}}\,\sigma-(\bar{\operatorname{div}}\,\sigma)(\bar\nu)]\,\bar g\big(a,\tfrac12\,H(\Sigma_i)\,x-\xi_i\big)\,\mathrm{d}\bar\mu+\frac12\,H(\Sigma_i)\,\int_{\Sigma_i} \sigma(a,\bar\nu)-\bar g(a,\bar\nu)\,\bar{\operatorname{tr}}\,\sigma\,\mathrm{d}\bar\mu\\&\qquad+o(1).
\end{aligned} 
\end{equation*} 
 \indent As in the proof of Lemma \ref{g expansion}, \eqref{scalar curvature} gives
 \begin{align*}
& \bar{\operatorname{div}}\left([\bar D\,\bar{\operatorname{tr}}\,\sigma-(\bar{\operatorname{div}}\,\sigma)]\,\bar g\big(a,\tfrac12\,H(\Sigma_i)\,x-\,\xi_i\big)\,+\frac12\,H(\Sigma_i)\, [\sigma(a,\,\cdot\,)-\bar g(a,\,\cdot\,)\,\bar{\operatorname{tr}}\,\sigma]\right)\\&\qquad =-R\,\bar g\big(a,\tfrac12\,H(\Sigma_i)\,x-\,\xi_i\big)+O(|x|^{-2-2\,\tau}).
 \end{align*} 
  Using the divergence theorem and that $R$ is integrable,  we find
\begin{align*} 
0=	  &\,\bar g(a,\xi_i)\int_{S_{H(\Sigma_i)^{-1}}(0)} (\bar{\operatorname{div}}\,\sigma)(\bar\nu)-\bar D_{\bar\nu}\bar{\operatorname{tr}}\,\sigma\,\mathrm{d}\bar\mu\\
&\qquad+\frac12\,H(\Sigma_i)\, \int_{S_{H(\Sigma_i)^{-1}}(0)}\bar g(a,x)\,\big[\bar D_{\bar\nu}\bar{\operatorname{tr}}\,\sigma- (\bar{\operatorname{div}}\,\sigma)(\bar\nu)\big] +\sigma(\bar\nu,a)-\bar g(a,\bar\nu)\,\bar{\operatorname{tr}}\,\sigma\,\mathrm{d}\bar\mu\\
&\qquad +o(1);
\end{align*}
see Figure \ref{slow divergence figure}.
	\begin{figure}\centering
	\includegraphics[width=0.5\linewidth]{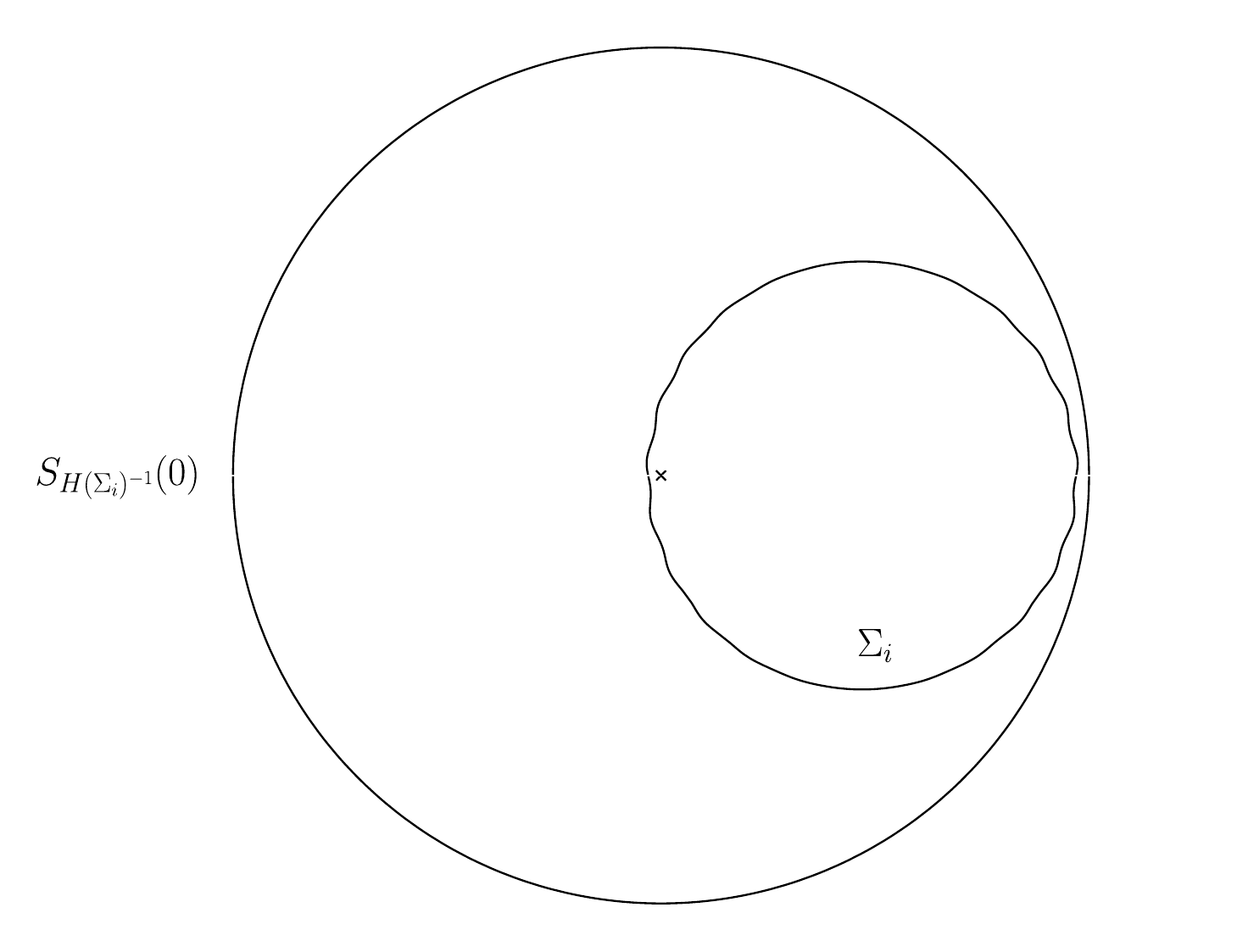}
	\caption{An illustration of the proof of Theorem \ref{uniqueness thm}. Using the divergence theorem, the flux integrals in \eqref{4 0} on $\Sigma_i$ can be computed on the sphere $S_{H(\Sigma_i)^{-1}}(0)$. The cross marks the origin in the asymptotically flat chart.} 
	\label{slow divergence figure}
\end{figure}
In conjunction with Lemma \ref{mass and com convergence}, we conclude that
$$
0=16\,\pi\,m\,\bar g(a,\xi_i)+o(1)
$$
 for every $a\in\mathbb{R}^3$ with $|a|=1$. This is incompatible with  \eqref{xi}.
\end{proof}

\section{Proof of Theorem \ref{uniqueness thm 2}} \label{sec:uniqueness 2}
In this section, we assume that $g$ is a Riemannian metric on $\mathbb{R}^3$  and that there is $\tau\in(1/2,1]$ with  
\begin{align*} 
g=\bar g+\sigma \qquad \text{where} \qquad 
\partial_J \sigma=O\left(|x|^{-\tau-|J|}\right)
\end{align*} 
for every multi-index $J$ with $|J|\leq 3$ and
$$
 R=O(|x|^{-5/2-\tau}).
$$
\indent  Let $\{\Sigma_i\}_{i=1}^\infty$ be a sequence of stable constant mean \label{uniqueness section 2 } curvature spheres $\Sigma_i\subset \mathbb{R}^3$ each enclosing $B_1(0)$ such that
\begin{align} \label{slow divergence 2} 
\lim_{i\to\infty}\rho(\Sigma_i)=\infty
\qquad\text{and}\qquad 
\rho(\Sigma_i)=o(\lambda(\Sigma_i))
\end{align} 
as $i\to\infty$. \\
\indent Let $x_i\in \Sigma_i\cap S_{\rho(\Sigma_i)}(0).$ Passing to a subsequence, we may assume that there is $\xi\in\mathbb{R}^3$ with $|\xi|=1$ and
$
\lim_{i\to\infty} |x_i|^{-1}\,x_i=-\xi. 
$

\begin{lem}
	Suppose \label{distance comp 2} that, for some $s> 1$,
\begin{align} \label{s}  
	\lambda(\Sigma_i)=O(\rho({\Sigma_i})^s).
\end{align} 
	The surfaces $\tfrac12\,H(\Sigma_i)\,\Sigma_i$  converge to $S_1(\xi)$ in $C^1$ in $\mathbb{R}^3$.
\end{lem}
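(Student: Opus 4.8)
The plan is to follow the proof of Lemma~\ref{distance comp} line by line, isolating the single place where the sign condition $R\geq 0$ entered and replacing it by an argument valid under the weaker hypothesis $R=O(|x|^{-5/2-\tau})$ together with the pinching \eqref{s}. As before, I would assume $\xi=e_3$, choose rotations $R_i\in SO(3)$ with $R_i(x_i)=|x_i|\,e_3$ so that $\lim_{i\to\infty}R_i=\operatorname{Id}$, and let $\gamma_i>0$ be largest such that $R_i(\Sigma_i)$ is a graph $(y,\rho(\Sigma_i)+u_i(y))$ over the disk $\{|y|\leq\gamma_i\}$ with $|\bar\nabla u_i|\leq 1$, exactly as in \eqref{gradient estimate}. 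The geometric inequalities \eqref{phi vs y} and \eqref{first h} relating $u_i$, its Hessian, and $\bar h(R_i(\Sigma_i))$ are purely Euclidean and carry over unchanged.

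The only step in the proof of Lemma~\ref{distance comp} that uses $R\geq 0$ is the curvature estimate \eqref{improved grad estimate}, where the improved pointwise bound on $\hbarcirc$ from Remark~\ref{h circ estimate bar} was invoked. Here I would instead use the general version of that estimate, which holds under $R=O(|x|^{-5/2-\tau})$. Combined with Lemma~\ref{basic h estimate}, the expansion \eqref{H estimate} of $H(\Sigma_i)$, and the bound $\sup_{x\in\Sigma_i}|x|=O(\lambda(\Sigma_i))$ (which holds here by the same argument that gives \eqref{x vs lambda}), this should yield an estimate of the form
\[
\bar h(R_i(\Sigma_i))=\tfrac12\,H(\Sigma_i)\,\bar g|_{R_i(\Sigma_i)}+O(|x|^{-\beta})
\]
for some $\beta>1$. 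The role of the pinching hypothesis \eqref{s} is precisely to guarantee $\beta>1$: since it forces $|x|=O(\lambda(\Sigma_i))=O(\rho(\Sigma_i)^s)$, it prevents $\lambda(\Sigma_i)$ from outgrowing $\rho(\Sigma_i)$ so rapidly that the now unimproved decay of $\hbarcirc$ fails to beat the threshold $|x|^{-1}$ on the relevant range $\rho(\Sigma_i)\leq|x|\lesssim\lambda(\Sigma_i)$. The competing metric term remains $O(|x|^{-1-\tau})$ with $1+\tau>1$ since $\tau>1/2$, so it is never the bottleneck.

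With this estimate in hand, \eqref{first h} gives a Hessian bound $|(\bar\nabla^2 u_i)|_y|\leq C\,H(\Sigma_i)+O((|y|+\rho(\Sigma_i))^{-\beta})$, and integrating twice as in the passage from \eqref{first h} to \eqref{grad estimate} produces $|(\bar\nabla u_i)|_y|\leq C\,|y|\,H(\Sigma_i)+o(1)$, because $\int_{\rho(\Sigma_i)}^{\infty}t^{-\beta}\,\mathrm{d}t=O(\rho(\Sigma_i)^{1-\beta})\to 0$ whenever $\beta>1$, independently of the possibly large upper integration limit $\gamma_i$. From here the lower bound $\tfrac12\,H(\Sigma_i)\,\gamma_i\geq\tfrac1{16}$, the $C^0$-closeness of $\nu(R_i(\Sigma_i))$ to $e_3$, and the final appeal to Lemma~\ref{large blowdown}, \cite[Theorem~3.1]{Simon}, and the curvature estimate to obtain $C^2$-convergence locally in $\mathbb{R}^3\setminus\{0\}$ to a sphere $S_1(\tilde\xi)$ with $\tilde\xi=\xi$, and hence $C^1$-convergence in all of $\mathbb{R}^3$, go through verbatim, since none of these steps used the sign of $R$.

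I expect the main obstacle to be the second paragraph: verifying that the general curvature estimate for $\hbarcirc$, without the Hawking-mass improvement available when $R\geq 0$, still decays strictly faster than $|x|^{-1}$ once the pinching \eqref{s} is taken into account. This is the one genuinely new ingredient; everything else is a transcription of the $R\geq 0$ argument. It is also the step that ultimately ties the admissible exponents to $\tau$, although for the $C^1$-convergence statement recorded in this lemma any $s>1$ suffices.
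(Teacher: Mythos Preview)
Your proposal is correct and matches the paper's approach essentially line for line: the paper's proof likewise reduces to replacing \eqref{improved grad estimate} by the estimate obtained from the general case of Remark~\ref{h circ estimate bar}, using $\sup_{x\in\Sigma_i}|x|=O(\rho(\Sigma_i)^s)$ from \eqref{x vs lambda} and \eqref{s} to convert the $\rho(\Sigma_i)$-dependence into $|x|$-decay, and then refers back verbatim to the argument of Lemma~\ref{distance comp}. The only difference is that the paper records the explicit exponent, writing $\bar h(\Sigma_i)=\tfrac12\,H_i\,\bar g|_{\Sigma_i}+O(|x|^{-1-(1/4+\tau/2)/s})+O(|x|^{-1}\,H_i^{1/2})$, whereas you argue abstractly that some $\beta>1$ exists; your observation that any $s>1$ suffices here is correct, since $(1/4+\tau/2)/s>0$ and the remaining term is $O(|x|^{-3/2})$ via $|x|=O(\lambda(\Sigma_i))$.
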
 
\begin{proof}
		Using 	\eqref{x vs lambda} and \eqref{s}, we have
		$$
\sup_{x\in\Sigma_i}		|x|=O(\rho(\Sigma_i)^{s}).
		$$
		In conjunction with Lemma \ref{basic h estimate}, \eqref{H estimate}, and Remark \ref{h circ estimate bar}, we obtain
		\begin{align} \label{improved grad estimate 2}  
		\bar h (\Sigma_i)=\frac12\,H(\Sigma_i)\,\bar g|_{\Sigma_i}+O(|x|^{-1-(1/4+\tau/2)\,1/s})+O(|x|^{-1}\,H(\Sigma_i)^{1/2}).
		\end{align}
		We may now argue as in the proof of Lemma \ref{distance comp} using \eqref{improved grad estimate 2} instead of \eqref{improved grad estimate}.
\end{proof}

\begin{proof}[Proof of Theorem \ref{uniqueness thm 2}]
	The proof is similar to that of Theorem \ref{uniqueness thm}. We only point out the necessary modifications. \\ \indent 
	Let $s>1$ be as in \eqref{s u 2}.
	If the conclusion of Theorem \ref{uniqueness thm 2} fails,
	 there is a sequence $\{\Sigma_i\}_{i=1}^\infty$ of stable constant mean curvature spheres $\Sigma_i\subset \mathbb{R}^3$ each enclosing $B_1(0)$ and such that \eqref{slow divergence 2} and
$
	\lambda(\Sigma_i)=O(\rho(\Sigma_i)^s)
$
hold. 	In particular, 
	\begin{align} \label{1 s}
	\lambda(\Sigma_i)=o(\rho(\Sigma_i)^{s'})
	\end{align}
	for every $s'>s$. Let $\tau'\in(1/2,\tau)$ be such that
	\begin{align} \label{2 s}
	s<1+\frac{3}{4}\,\frac{2\,\tau-1}{1-\tau'}+\frac{1}{2}\,\frac{\tau'-\tau}{1-\tau'}.
	\end{align}
	It follows  from \eqref{1 s} and \eqref{2 s} that 
	\begin{equation} \label{3 s} 
	\begin{aligned} 
	O(\lambda(\Sigma_i)^{1-\tau'}\,\rho(\Sigma_i)^{-1/4-\tau/2-(\tau-\tau')})=&\,o(1),\\
	 O(\lambda(\Sigma_i)^{1-\tau'}\,\rho(\Sigma_i)^{-1/4-\tau/2-(\tau-\tau')/2})=&\,o(1).
	\end{aligned}
	\end{equation} 
	  \indent 
Compared to the proof of Theorem \ref{uniqueness thm}, to obtain \eqref{4 5}, we now use Remark \ref{h circ estimate bar} to  estimate
	\begin{align*}
|\hbarcirc|\,|\sigma|=\,&O(|x|^{-1-\tau}\,\rho(\Sigma_i)^{-1/4-\tau/2})+O(|x|^{-1-\tau}\,\lambda(\Sigma_i)^{-1/2})\\=\,&O(\lambda(\Sigma_i)^{1-\tau'}\,|x|^{-2-(\tau-\tau')}\,\rho(\Sigma_i)^{-1/4-\tau/2})+O(|x|^{-3/2-\tau}).
	\end{align*}
	Using  Lemma \ref{hy integral lemma} and \eqref{3 s}, we  obtain
	\begin{align*}  
	\int_{\Sigma_i}|\hbarcirc|\,|\sigma|\,\mathrm{d}\bar\mu=O(\lambda(\Sigma_i)^{1-\tau'}\,\rho(\Sigma_i)^{-1/4-\tau/2-(\tau-\tau')})+O(\rho(\Sigma_i)^{1/2-\tau})=o(1).
	\end{align*} 
	\indent
	Instead of \eqref{der est}, we now apply  Remark \ref{h circ estimate bar} and \eqref{H estimate} to estimate \begin{align*} 
	\bar\nabla E_i=O(|x|^{-1}\,\rho(\Sigma_i)^{-1/4-\tau/2})+O(|x|^{-1}\,\lambda(\Sigma)^{-1/2}).
	\end{align*} 
	Lemma \ref{distance comp 2} and integration give \begin{align*} E_i=\,&O(|x|^{(\tau-\tau ')/2}\,\rho(\Sigma_i)^{-1/4-\tau/2})+O(|x|^{(\tau-\tau ')/2}\,\lambda(\Sigma_i)^{-1/2})\\=\,&O(\lambda(\Sigma_i)^{1-\tau'}\,|x|^{-1+\tau-(\tau-\tau')/2}\,\rho(\Sigma_i)^{-1/4-\tau/2})+O(|x|^{-1/2+(\tau-\tau')/2}).
	\end{align*}
	By \eqref{3 s} and Lemma \ref{hy integral lemma},   
\begin{equation*}  
\begin{aligned} 
0=\,&\int_{\Sigma_i}[\bar D_{\bar\nu}\bar{\operatorname{tr}}\,\sigma-(\bar{\operatorname{div}}\,\sigma)(\bar\nu)]\,\bar g\big(a,\tfrac12\,H(\Sigma_i)\,x-\xi_i\big)\,\mathrm{d}\bar\mu+\frac12\,H(\Sigma_i)\,\int_{\Sigma_i} \sigma(a,\bar\nu)-\bar g(a,\bar\nu)\,\bar{\operatorname{tr}}\,\sigma\,\mathrm{d}\bar\mu\\&\qquad+o(1).
\end{aligned} 
\end{equation*} 
	The argument concludes exactly as in the proof of Theorem \ref{uniqueness thm}.
\end{proof} 
	
\begin{appendices}
		\section{Asymptotically flat manifolds}
	\label{af manifolds appendix}
	Let $n\geq 3$ and $k,\,\ell\geq 2$ be integers. \\ \indent  A metric $g$ on $\{x\in\mathbb{R}^n:|x|>1/2\}$ is called $C^k$-asymptotically flat if its scalar curvature is integrable with $R=o(|x|^{-n})$ as $x\to\infty$ and if there are $\tau\in((n-2)/2,n-2]$ and a symmetric $(0,2)$-tensor $\sigma$ such that 
	\begin{equation} \label{asymptotically flat}
		\begin{aligned}  
	&g=\bar g+\sigma\qquad\text{where}  \qquad 
	\partial_J \sigma=O\left(|x|^{-\tau-|J|}\right) 
\end{aligned} 	
\end{equation} 
 for every multi-index $J$ with $|J|\leq k$. If, more strongly, for some $m\in\mathbb{R}$,
	 \begin{equation} \label{Schwarzschild metric}  
	\begin{aligned}  
	g=&\,\bigg(1+\frac{m}{2}\,|x|^{2-n}\bigg)^{\frac{4}{n-2}}\,\bar g+\sigma\qquad\text{where}\qquad 
	\partial_J \sigma=O\left(|x|^{-2-|J|}\right)
	\end{aligned} 
	\end{equation} 
 for every multi-index $J$ with $|J|\leq k$, then $g$ is called $C^k$-asymptotic to Schwarzschild with mass $m$.\\ 
	 \indent 
	A  connected complete Riemannian manifold $(M,g)$ of dimension $n$ is said to be $C^k$-asymptotically flat if there is a nonempty compact subset of $M$ whose complement is diffeomorphic to $\{x\in\mathbb{R}^n:|x|>1/2\}$ and  the pull-back of $g$ by this diffeomorphism is $C^k$-asymptotically flat. Such a diffeomorphism is called an asymptotically flat chart.  We sometimes say that
	$(M,g)$ is $C^k$-asymptotically flat of rate $\tau>(n-2)/2$ to stress the specific decay rate. If there is an asymptotically flat chart such that the pull-back metric  takes the form \eqref{Schwarzschild metric}, $(M,g)$ is called $C^k$-asymptotic to Schwarzschild with mass $m$. R.~Bartnik has shown that the integral in \eqref{mass} of a $C^2$-asymptotically flat manifold converges and that the limit does not depend on the choice of asymptotically flat chart; see \cite[Theorem 4.2]{Bartnik}. \\ \indent  We usually fix an asymptotically flat reference chart and write $B_r$, $r\geq1$, to denote the open, bounded domain in $(M,g)$ whose boundary corresponds to $S_r(0)=\{x\in\mathbb{R}^n:|x|=r\}$ in this chart. \\ \indent 
Let $(M,g)$ be a connected complete Riemannian manifold $(M,g)$ of dimensions $n$ that is $C^k$-asymptotically flat.	Following \cite[p.~292]{RT}, we say that $(M,g)$  satisfies the $C^\ell$-Regge-Teitelboim conditions   if there is $\hat \tau\in((n-2)/2,n-2]$ with
\begin{equation} \label{RT} 
	\begin{aligned}
	\partial_J\hat g&=O\left(|x|^{-1-\hat\tau-|J|}\right)  \text{ and } 
	 \\ \hat R&=O(|x|^{-3-2\,\hat\tau })
\end{aligned} 
\end{equation}
 for every multi-index $J$ with $|J|\leq \ell$. Here, 
	\begin{equation*}
		\begin{aligned} 
	\hat g(x)=\frac12\,(g(x)-g(-x))\qquad\text{and}\qquad \hat R(x)=\frac12\,(R(x)-R(-x)).	\end{aligned}
\end{equation*} 
 Following \cite[Definition 6.2]{Nerz}, if there is $\hat \tau\in((n-2)/2,n-2]$ with 
\begin{equation} \label{RT Nerz}
	\begin{aligned}  
	\partial_J\hat g&=O\left(|x|^{-1/2-\hat\tau-|J|}\right)   \text{ and } 
	\\\hat R&=O(|x|^{-5/2-2\,\hat\tau})
	\end{aligned} 
\end{equation}  
 for every multi-index $J$ with $|J|\leq \ell$,	we say that $(M,g)$ satisfies the weak $C^\ell$-Regge-Teitelboim conditions. \\ \indent
In the case where $n=3$,	L.-H.~Huang has verified that the limit in \eqref{center of mass} exists if $(M,g)$ is $C^3$-asymptotically flat and satisfies the $C^2$-Regge-Teitelboim conditions; see \cite[Theorem 2.2]{Huang2}. In Proposition \ref{existence COM}, we have included a proof that  \eqref{center of mass} also converges in the case where $n\geq 3$ and $(M,g)$ is $C^2$-asymptotically flat and satisfies the $C^2$-Regge-Teitelboim conditions.    The center of mass depends on the choice of asymptotically flat chart.  Recall from \cite[Corollary 3.2]{Bartnik} that any two asymptotically flat charts are asymptotically equal up to a Euclidean isometry. If different asymptotically flat charts are used to compute the center of mass, then the results are related by the same isometry as the  charts; see \cite[Theorem 3.1]{Huang2}.
	\section{First and second variation of area and volume}
	\label{CMC appendix}
	In this section, we collect results on the first and second variation of area and volume from  \cite[Appendix H]{CCE} that are used repeatedly in this paper.\\ \indent  Let $(M,g)$ be a  Riemannian manifold of dimension $n\geq 3$ and $\Sigma\subset M$ a  closed two-sided hypersurface with unit normal $\nu$. We also assume that $\Sigma\cap\partial M=\emptyset$.   Let $\varepsilon>0$ and $U\in C^{\infty}(\Sigma\times(-\varepsilon,\varepsilon))$ with $U(\,\cdot\,,\,0)=0$. 
	For $\varepsilon>0$ sufficiently small the map, 
		$$
	\Psi:\Sigma\times(-\varepsilon,\varepsilon)\to M\qquad\text{given by}\qquad \Psi(x,s)=\operatorname{exp}_x(U(x,s)\,\nu(x))
	$$
	is an embedding. Let $\Sigma_s=\Psi(\Sigma,s)$. Note that $\{\Sigma_s:s\in(-\varepsilon,\varepsilon)\}$ is a smooth variation of $\Sigma=\Sigma_0$. In fact, every smooth family of hypersurfaces near $\Sigma$ can be parametrized in this way. 	We denote the initial speed and initial acceleration of $\Sigma=\Sigma_0$ by
	$$
	f(x)=\dot{U}(x,\,0)\qquad \text{  and } \qquad a(x)=\ddot{U}(x,\,0).
	$$
Let
	\begin{align}
	Lf=-\Delta f-(|h|^2+\operatorname{Ric}(\nu,\nu))\,f,
	\label{stability operator}
	\end{align}
 where $\Delta$ is the nonpositive Laplace operator on $\Sigma$ with respect to the induced metric, $h$ the second fundamental form of $\Sigma$, and $\operatorname{Ric}$  the  Ricci curvature of $(M,g)$. Recall that
	\begin{align}
	Lu=\frac{d}{ds}\bigg|_{s=0}(H(\Sigma_s)\circ\Psi(\,\cdot\,,s)). \label{mean curvature change}
	\end{align}
	In Lemma \ref{variation lemma} below,
	$$
	\operatorname{vol}(\Sigma_s)=\begin{dcases}&\int_{\Sigma\times[0,s]}\Psi^*(\mathrm{d}v)\qquad\text{if }s\geq 0 \\&\int_{\Sigma\times[s,0]}\Psi^*(\mathrm{d}v)\qquad\text{if }s\leq 0 
	\end{dcases}
	$$
	denotes the volume enclosed by $\Sigma_s$ relative to $\Sigma_0=\Sigma$.
	\begin{lem} There holds \label{variation lemma}
		\begin{align*}
		\frac{d}{ds}\bigg|_{s=0}|\Sigma_s|=\int_{\Sigma} H\,f\,\mathrm{d}\mu\qquad \text{and} \qquad 	\frac{d}{ds}\bigg|_{s=0}\operatorname{vol}(\Sigma_s)=\int_{\Sigma} f\,\mathrm{d}\mu.
		\end{align*}
		Moreover,
		\begin{align*}
		\frac{d^2}{ds^2}\bigg|_{s=0}|\Sigma_s|=\int_{\Sigma}f\,Lf+H^2\,f^2+H\,a\,\mathrm{d}\mu \qquad \text{and} \qquad \frac{d^2}{ds^2}\bigg|_{s=0}\operatorname{vol}(\Sigma_s)=\int_{\Sigma} H\,f^2+a\,\mathrm{d}\mu.
		\end{align*}
	\end{lem}
The variation $\{\Sigma_s:s\in(-\varepsilon,\varepsilon)\}$ is called volume-preserving if
$
\operatorname{vol}(\Sigma_s)=\operatorname{vol}(\Sigma)
$
for all $s\in(-\varepsilon,\varepsilon)$.
	 Hypersurfaces that are critical  for the area functional among all volume-preserving variations have constant mean curvature. They are called constant mean curvature hypersurfaces.   \\ \indent A constant mean curvature hypersurface $\Sigma$ is stable if it passes the second derivative test for  area  among all volume-preserving variations. 	It can be shown that $\Sigma$ is stable if and only if 
	\begin{align} \label{stability} 
	\int_{\Sigma} f\,Lf\,\mathrm{d}\mu\geq0
	\end{align} 
	for every $f\in C^{\infty}(\Sigma)$ with
	$$
	\int_{\Sigma}f\,\mathrm{d}\mu=0.
	$$

	\section{The Laplace operator on the unit sphere}
In this section, we collect some standard results about the Laplace operator on the unit sphere. \\ \indent
Let $n\geq 3$ be an integer. Recall that $S_1(0)=\{x\in\mathbb{R}^n:|x|=1\}$.
	\begin{lem}\label{spherical harmonics lemma 1}
		The eigenvalues of the operator 
		$$-\bar\Delta:H^{2}(S_1(0))\to L^2(S_1(0))$$
		are given by
		$$
		\{\ell\,(\ell+n-2):\ell=0\,,1\,,2,\dots\}.
		$$	
	\end{lem}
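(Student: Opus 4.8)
The plan is to exhibit the eigenvalues explicitly by restricting harmonic polynomials to the sphere, and then to rule out any others by a density argument. First I would pass to polar coordinates $x = r\,\omega$ on $\mathbb{R}^3$, where $r = |x|$ and $\omega \in S_1(0)$, and record the standard decomposition of the flat Laplacian
$$
\Delta_{\mathbb{R}^3} = \partial_r^2 + \frac{2}{r}\,\partial_r + \frac{1}{r^2}\,\bar\Delta.
$$
Given a homogeneous harmonic polynomial $P$ of degree $\ell$ on $\mathbb{R}^3$, I write $P(x) = r^\ell\,Y(\omega)$ with $Y = P|_{S_1(0)}$. Substituting this into $\Delta_{\mathbb{R}^3} P = 0$ and using the decomposition above yields
$$
\bar\Delta Y + \ell\,(\ell+1)\,Y = 0,
$$
so the restriction of any degree-$\ell$ harmonic polynomial is an eigenfunction of $-\bar\Delta$ with eigenvalue $\ell\,(\ell+1)$.

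Next, to confirm that each value $\ell\,(\ell+1)$ is actually attained, I would exhibit a nonzero harmonic homogeneous polynomial of every degree $\ell \geq 0$. For instance, the real part of $(x^1 + i\,x^2)^\ell$ is harmonic and homogeneous of degree $\ell$, so its restriction to $S_1(0)$ is a nontrivial eigenfunction and $\ell\,(\ell+1)$ belongs to the spectrum for every $\ell$.

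It remains to show that these are the only eigenvalues. Here I would invoke that $-\bar\Delta$ is a nonnegative self-adjoint elliptic operator on the closed surface $S_1(0)$, hence has discrete spectrum and an $L^2$-complete orthonormal basis of eigenfunctions. Therefore it suffices to prove that the linear span of all spherical harmonics --- the restrictions to $S_1(0)$ of harmonic homogeneous polynomials --- is dense in $L^2(S_1(0))$, since any eigenvalue not of the form $\ell\,(\ell+1)$ would produce an eigenfunction orthogonal to every spherical harmonic. Density follows by combining the Stone--Weierstrass theorem, which gives that polynomials are dense in $C(S_1(0))$ and hence in $L^2(S_1(0))$, with the algebraic fact that every homogeneous polynomial of degree $\ell$ decomposes as $\sum_k |x|^{2\,k}\,H_{\ell - 2\,k}$ into harmonic homogeneous pieces $H_j$; restricting to $S_1(0)$, where $|x|^2 = 1$, expresses every polynomial as a finite sum of spherical harmonics.

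The coordinate computation and the construction of harmonic polynomials are routine; the step that requires genuine care is completeness, that is, excluding spurious eigenvalues, which rests on the density of spherical harmonics in $L^2$ and thus ultimately on the harmonic decomposition of polynomials.
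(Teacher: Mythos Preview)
Your argument is correct and complete: the polar decomposition of $\Delta_{\mathbb{R}^3}$, the eigenvalue computation for restrictions of harmonic homogeneous polynomials, the explicit example $\operatorname{Re}(x^1+i\,x^2)^\ell$, and the completeness step via Stone--Weierstrass together with the harmonic decomposition of homogeneous polynomials are all sound.

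By way of comparison, the paper does not actually prove this lemma at all. It is placed in an appendix titled ``The Laplace operator on the unit sphere'' whose opening sentence reads ``we collect some standard facts,'' and the lemma is simply stated as a classical result without argument. So there is no paper proof to compare against; you have supplied the standard textbook derivation of a fact the authors take for granted.
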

	We denote the eigenspace corresponding to the eigenvalue $\ell\,(\ell+n-2)$ by $$\Lambda_\ell(S_1(0))=\{f\in C^{\infty}(S_1(0)):-\bar\Delta f=\ell\,(\ell+n-2)f\}.$$ 
	Recall that these eigenspaces are finite-dimensional and that
	$$
	L^2(S_1(0))=\bigoplus_{\ell=0}^\infty \Lambda_\ell(S_1(0)). 
	$$
Moreover, 
		$\Lambda_0(S_1(0))=\operatorname{span}\{1\}$ and
		$\Lambda_1(S_1(0))=\operatorname{span}\{x^1,\ldots,x^n\}$.
\begin{coro}
	Let $f\in C^\infty(S_1(0))$ with $f\perp \{1,\,x^1,\ldots, x^n\}$. There holds \label{coro st estimate}
	$$
	-\int_{S_1(0)}f\,\bar \Delta f+(n-1)\,f^2\,\mathrm{d}\bar\mu\geq (n+1)\,\int_{S_1(0)}f^2\,\mathrm{d}\bar\mu.
	$$
\end{coro}

	\section{Mass and center of mass}
In this section, we collect some observations on the flux integrals that define the mass \eqref{mass} and the center of mass \eqref{center of mass}. \\ \indent 
Let $n\geq 3$. We assume that $g$ is a  Riemannian metric on $\mathbb{R}^n$ with integrable scalar curvature and that there is $\tau\in((n-2)/2,n-2]$ with
	\begin{equation} \label{mcm decay} 
	\begin{aligned}  
		&g=\bar g+\sigma\qquad\text{where}  \qquad 
		\partial_J \sigma=O\left(|x|^{-\tau-|J|}\right)
	\end{aligned} 	
\end{equation}  
for every multi-index $J$ with $|J|\leq 2$.
	\begin{lem}
		There holds, as $\lambda\to\infty$, \label{mass and com convergence}
		\begin{align} \label{first}  
		\frac{1}{2\,n\,(n-1)\,\omega_n}\int_{S_\lambda(0)}
		(\bar{\operatorname{div}}\,\sigma)(\bar\nu)-\bar D_{\bar\nu}\bar{\operatorname{tr}}\,\sigma\,\mathrm{d}\bar\mu=m+o(1)
		\end{align} 
				and
		\begin{align}  \label{second} 
		\int_{S_{\lambda}(0)}\big[\bar D_{\bar\nu}\bar{\operatorname{tr}}\,\sigma- (\bar{\operatorname{div}}\,\sigma)(\bar\nu)\big]\,\bar\nu+\lambda^{-1}\,\sum_{i=1}^n\sigma(\bar\nu,e_i)\,e_i-\lambda^{-1}\,\bar{\operatorname{tr}}\,\sigma\, \bar\nu\,\mathrm{d}\bar\mu=o(1).
		\end{align}

	\end{lem}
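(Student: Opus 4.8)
The plan is to read both identities as statements about boundary fluxes and to prove them with the divergence theorem, exploiting the scalar curvature expansion \eqref{scalar curvature} and the integrability of $R$. Throughout I fix a reference radius $r_0>1$ and use that on $S_\lambda(0)$ one has $\bar\nu=|x|^{-1}\,x$, so that $\lambda^{-1}\,x^i=\bar\nu^i$.

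For \eqref{first}, I would first note that the integrand is exactly the ADM mass density. Since $\bar g$ has constant components, $\partial_j g=\partial_j\sigma$, and a direct computation gives
\[
(\bar{\operatorname{div}}\,\sigma)(\bar\nu)-\bar D_{\bar\nu}\operatorname{tr}\sigma=\lambda^{-1}\sum_{i,\,j=1}^3 x^i\big[(\partial_j g)(e_i,e_j)-(\partial_i g)(e_j,e_j)\big].
\]
Comparing with \eqref{mass}, the left-hand side of \eqref{first} is precisely the quantity whose limit defines $m$, which converges because $(M,g)$ is $C^2$-asymptotically flat with integrable scalar curvature; see \cite[Theorem 4.2]{Bartnik}. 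Equivalently, and self-contained, this density equals $\bar g(W_m,\bar\nu)$ for $W_m=\sum_j[(\bar{\operatorname{div}}\,\sigma)(e_j)-\bar D_{e_j}\operatorname{tr}\sigma]\,e_j$, whose divergence is $\bar{\operatorname{div}}\,W_m=R+O(|x|^{-2-2\tau})$ by \eqref{scalar curvature}; since $\tau>1/2$ makes $|x|^{-2-2\tau}$ integrable and $R$ is integrable, the divergence theorem on $B_\lambda\setminus B_{r_0}$ shows the flux converges, and \eqref{first} follows.

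For \eqref{second}, fix $a\in\mathbb{R}^3$ and introduce the vector field
\[
\hat W=\sum_{j=1}^3\Big[[\bar D_{e_j}\operatorname{tr}\sigma-(\bar{\operatorname{div}}\,\sigma)(e_j)]\,\bar g(a,x)+\sigma(a,e_j)-\bar g(a,e_j)\,\operatorname{tr}\sigma\Big]\,e_j.
\]
This choice has two virtues. First, a short computation shows that on $S_\lambda(0)$ the radial component $\bar g(\hat W,\bar\nu)$ equals $\lambda$ times the $a$-component of the integrand of \eqref{second}, so that $\int_{S_\lambda(0)}\bar g(\hat W,\bar\nu)\,\mathrm{d}\bar\mu=\lambda\,\bar g(a,I_\lambda)$, where $I_\lambda$ denotes the vector on the left-hand side of \eqref{second}. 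Second, because $\bar g(a,x)$ has constant gradient $a$ and the remaining terms carry constant coefficient, the first-order terms produced by $\bar{\operatorname{div}}$, namely $[\bar D_a\operatorname{tr}\sigma-(\bar{\operatorname{div}}\,\sigma)(a)]$ and $[(\bar{\operatorname{div}}\,\sigma)(a)-\bar D_a\operatorname{tr}\sigma]$, cancel exactly — precisely the mechanism used in the proof of Lemma \ref{g expansion} — leaving, via \eqref{scalar curvature},
\[
\bar{\operatorname{div}}\,\hat W=-R\,\bar g(a,x)+O(|x|^{-1-2\tau}).
\]

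I would then apply the divergence theorem on $B_\lambda\setminus B_{r_0}$, obtaining
\[
\lambda\,\bar g(a,I_\lambda)=\int_{S_{r_0}(0)}\bar g(\hat W,\bar\nu)\,\mathrm{d}\bar\mu+\int_{B_\lambda\setminus B_{r_0}}\big[-R\,\bar g(a,x)+O(|x|^{-1-2\tau})\big]\,\mathrm{d}\bar\mu.
\]
The inner flux is a fixed constant, and since $\tau>1/2$ one has $\int_{B_\lambda\setminus B_{r_0}}|x|^{-1-2\tau}\,\mathrm{d}\bar\mu=O(\lambda^{2-2\tau})=o(\lambda)$. The main obstacle is the weighted curvature integral $\int_{B_\lambda\setminus B_{r_0}}|R|\,|x|\,\mathrm{d}\bar\mu$: only $R$, and not $|x|\,R$, is assumed integrable, so this need not converge. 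The remedy is a tail estimate: given $\epsilon>0$, choose $S$ with $\int_{|x|>S}|R|<\epsilon$ and split the domain at radius $S$ to get $\int_{B_\lambda\setminus B_{r_0}}|R|\,|x|\le C_S+\epsilon\,\lambda$, whence $\int_{B_\lambda\setminus B_{r_0}}|R|\,|x|=o(\lambda)$. Dividing by $\lambda$ yields $\bar g(a,I_\lambda)=o(1)$ for every $a\in\mathbb{R}^3$, which is \eqref{second}. The only genuinely delicate points are securing the exact cancellation of the first-order terms in $\bar{\operatorname{div}}\,\hat W$ and extracting the $o(\lambda)$ bound for the weighted curvature integral from mere integrability of $R$.
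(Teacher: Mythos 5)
Your proposal is correct and follows essentially the same route as the paper: both arguments realize the two integrands as fluxes of the same vector fields, use \eqref{scalar curvature} to compute the divergence as $-R\,\bar g(a,x)+O(|x|^{-1-2\,\tau})$ after the exact cancellation of the first-order terms, and then combine the divergence theorem with the integrability of $R$ (via a tail-splitting estimate) to conclude. The only difference is presentational: for \eqref{second} the paper packages the divergence theorem as the differential relation $F'=-\lambda^{-1}\,F+o(\lambda^{-1})+h$ with $h$ integrable and then integrates, which is equivalent to your single application of the divergence theorem on the annulus $B_\lambda\setminus B_{r_0}$ followed by division by $\lambda$.
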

	\begin{proof} 
		Note that the limit of the left-hand side of \eqref{first} as $\lambda\to\infty$ equals the mass \eqref{mass}. The existence of the limit  follows from the divergence theorem, \eqref{scalar curvature}, and the assumption that $R$ is integrable; see, e.g.,~\cite{Bartnik}.
		  \\ \indent
		To verify \eqref{second}, we define the map 
		$
		F:(1,\infty)\to\mathbb{R}^n
		$
	 by
		\begin{align} \label{F} 
		F(\lambda)=\int_{S_{\lambda}(0)}\big[\bar D_{\bar\nu}\bar{\operatorname{tr}}\,\sigma- (\bar{\operatorname{div}}\,\sigma)(\bar\nu)\big]\,x+\sum_{i=1}^n\sigma(\bar\nu,e_i)\,e_i-\bar{\operatorname{tr}}\,\sigma\,\bar\nu\,\mathrm{d}\bar\mu.
		\end{align} 
		Using \eqref{scalar curvature} and \eqref{mcm decay}, we have
\begin{align*} 
	\bar{\operatorname{div}}\left[\big(\bar D\,\bar{\operatorname{tr}}\,\sigma- \bar{\operatorname{div}}\,\sigma\big)\,x^\ell+\sigma(\,\cdot\,,e_\ell)-\bar{\operatorname{tr}}\,\sigma\,\bar g(\,\cdot\,,e_\ell)\right]=R\,x^\ell+O(|x|^{-1-2\,\tau}) 
\end{align*} 
as $x\to\infty$ for every $\ell=1,\ldots,n$. Using the divergence theorem and that $R$ is integrable, we conclude that
		$$(\lambda^{-1}\,F)'=-\lambda^{-2}\,F+h$$
		as $\lambda\to\infty$ where  $h$ is integrable. It follows that $\lambda^{-1}\,F=o(1)$. The assertion follows.
	\end{proof}	
For Proposition \ref{existence COM} below, recall that
	\begin{equation*}
	\begin{aligned} 
		\hat g(x)=\frac12\,(g(x)-g(-x))\qquad\text{and}\qquad \hat R(x)=\frac12\,(R(x)-R(-x)).	\end{aligned}
\end{equation*} 
\begin{prop} \label{existence COM}  
Suppose that there is $\hat \tau\in((n-2)/2,n-2]$ with
\begin{align} \label{mcm rt}  
\partial_J\hat g=O\left(|x|^{-1-\hat\tau-|J|}\right)\qquad\text{and}\qquad \hat R=O(|x|^{-3-2\,\hat\tau })
\end{align} 
for every multi-index $J$ with $|J|\leq 2$. Then  the quantities
\begin{align*} 
		\int_{S_{\lambda}(0)}\big[\bar D_{\bar\nu}\bar{\operatorname{tr}}\,\sigma- (\bar{\operatorname{div}}\,\sigma)(\bar\nu)\big]\,x+\sum_{i=1}^n\sigma(\bar\nu,e_i)\,e_i-\bar{\operatorname{tr}}\,\sigma\,\bar\nu\,\mathrm{d}\bar\mu
\end{align*}
converge as $\lambda\to\infty$.
\end{prop}
	\begin{proof}
Let
	$
	F:(1,\infty)\to\mathbb{R}^n
	$
be as in \eqref{F}. Using \eqref{scalar curvature}, we have, for every $\ell=1,\ldots,n$,
	\begin{align*} 
	&\bar{\operatorname{div}}\left[\big(\bar D\,\bar{\operatorname{tr}}\,\sigma- \bar{\operatorname{div}}\,\sigma\big)\,x^\ell+\sigma(\,\cdot\,,e_\ell)-\bar{\operatorname{tr}}\,\sigma\,\bar g(\,\cdot\,,e_\ell)\right]
	\\&\qquad =R\,x^\ell+\sigma*\bar D^2\sigma\,x^\ell+\bar D\sigma*\bar D\sigma\,x^\ell+\sigma*\sigma*\bar D^2\sigma\,x^\ell+\sigma*\bar D\sigma*\bar D\sigma\,x^\ell+O(|x|^{-1-4\,\tau}). 
	\end{align*} 
 Using \eqref{mcm decay} and \eqref{mcm rt}, we conclude that
	\begin{align*} F'(\lambda)&=\int_{S_\lambda(0)}R\,x^\ell+\sigma*\bar D^2\sigma\,x^\ell+\bar D\sigma*\bar D\sigma\,x^\ell+\sigma*\sigma*\bar D^2\sigma\,x^\ell+\sigma*\bar D\sigma*\bar D\sigma\,x^\ell+O(|x|^{-1-4\,\tau})\,\mathrm{d}\bar {\mu}\\&=O(\lambda^{n-3-2\,\hat\tau})+O(\lambda^{n-3-\tau-\hat\tau})+O(\lambda^{n-2-4\,\tau}),\end{align*} 
 In particular,   $F'$ is integrable. The assertion follows.
\end{proof}	
The following integration by parts formula has been proven in \cite{chodoshfar} in the case where $n=3$. The case where $n>3$ requires only formal modifications.
\begin{lem} [{\cite[p.~168-169]{chodoshfar}}]
	\label{ce int by parts rema} Let $\xi\in\mathbb{R}^n$ and $\lambda>0$. There holds 
	\begin{align*}& \int_{S_{\xi,\lambda}}\bar D_a\bar{\operatorname{tr}}\,\sigma-(\bar D_a\sigma)(\bar\nu,\bar\nu)-(n-1)\,\lambda^{-1}\,\bar g(a,\bar\nu)\,\bar{\operatorname{tr}}\,\sigma\,\mathrm{d}\bar\mu
	\\&\qquad =\lambda^{-1}\,\int_{S_{\xi,\lambda}}\bar g(a,x-\lambda\,\xi)\,\big[\bar D_{\bar\nu}\bar{\operatorname{tr}}\,\sigma- (\bar{\operatorname{div}}\,\sigma)(\bar\nu)\big] +\sigma(\bar\nu,a)-\bar g(a,\bar\nu)\,\bar{\operatorname{tr}}\,\sigma\,\mathrm{d}\bar\mu
	\end{align*} 
	for every  $a\in\mathbb{R}^n$.
\end{lem}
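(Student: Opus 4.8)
The identity is a purely Euclidean integration-by-parts statement on the round sphere $S_{\xi,\lambda}=S_\lambda(\lambda\,\xi)$, so the plan is to work throughout with the background metric $\bar g$, the outward unit normal $\bar\nu=\lambda^{-1}(x-\lambda\,\xi)$, and the mean curvature $\bar H=2\,\lambda^{-1}$. The single tool I would use is the tangential divergence theorem: for any ambient vector field $X$ defined near $S_{\xi,\lambda}$,
$$
\int_{S_{\xi,\lambda}}\big[\bar{\operatorname{div}}\,X-\bar g(\bar D_{\bar\nu}X,\bar\nu)\big]\,\mathrm{d}\bar\mu=2\,\lambda^{-1}\int_{S_{\xi,\lambda}}\bar g(X,\bar\nu)\,\mathrm{d}\bar\mu,
$$
where $\bar\nu$ is extended radially so that $\bar D_Y\bar\nu=\lambda^{-1}\,Y$ for every $Y$. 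I would first record that $\lambda^{-1}\,\bar g(a,x-\lambda\,\xi)=\bar g(a,\bar\nu)$, which rewrites the claimed right-hand side purely in terms of $\bar\nu$.

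The first step is to dispose of the term $\int_{S_{\xi,\lambda}}\bar D_a\operatorname{tr}\sigma\,\mathrm{d}\bar\mu$ on the left. Decomposing $a=a^\top+\bar g(a,\bar\nu)\,\bar\nu$ and using that $a$ is constant, a short computation gives that the tangential divergence of $a^\top$ equals $-2\,\lambda^{-1}\,\bar g(a,\bar\nu)$; hence integrating $\bar D_{a^\top}\operatorname{tr}\sigma=\bar g(\bar\nabla\operatorname{tr}\sigma,a^\top)$ by parts over the closed surface produces exactly $2\,\lambda^{-1}\int_{S_{\xi,\lambda}}\operatorname{tr}\sigma\,\bar g(a,\bar\nu)\,\mathrm{d}\bar\mu$. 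This cancels the third term in the left-hand integrand and reduces the claim to the core identity
$$
\int_{S_{\xi,\lambda}}(\bar D_a\sigma)(\bar\nu,\bar\nu)\,\mathrm{d}\bar\mu=\int_{S_{\xi,\lambda}}\bar g(a,\bar\nu)\,(\bar{\operatorname{div}}\,\sigma)(\bar\nu)\,\mathrm{d}\bar\mu-\lambda^{-1}\int_{S_{\xi,\lambda}}\sigma(a,\bar\nu)\,\mathrm{d}\bar\mu+\lambda^{-1}\int_{S_{\xi,\lambda}}\bar g(a,\bar\nu)\,\operatorname{tr}\sigma\,\mathrm{d}\bar\mu.
$$

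To prove this core identity, I would again split $a=a^\top+\bar g(a,\bar\nu)\,\bar\nu$ inside $(\bar D_a\sigma)(\bar\nu,\bar\nu)$ and integrate the tangential piece by parts, this time generating correction terms through $\bar D_{a^\top}\bar\nu=\lambda^{-1}\,a^\top$. Separately, I would apply the tangential divergence theorem to the weighted field $X^i=\bar g(a,\bar\nu)\,\sigma(\bar\nu,e_i)$. Differentiating $\bar g(a,\bar\nu)$ and $\sigma(\bar\nu,\cdot)$ tangentially against a local orthonormal frame $E_1,E_2$ of $S_{\xi,\lambda}$ and using $\bar D_Y\bar\nu=\lambda^{-1}\,Y$ produces the tangential part $\sum_{\alpha}(\bar D_{E_\alpha}\sigma)(E_\alpha,\bar\nu)$ of the divergence, which equals $(\bar{\operatorname{div}}\,\sigma)(\bar\nu)-(\bar D_{\bar\nu}\sigma)(\bar\nu,\bar\nu)$, together with $\lambda^{-1}$-multiples of $\sigma(\bar\nu,\bar\nu)\,\bar g(a,\bar\nu)$, $\operatorname{tr}\sigma\,\bar g(a,\bar\nu)$, and $\sigma(a,\bar\nu)$. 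Combining the two computations, every term outside the core identity cancels.

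The main obstacle is not conceptual but bookkeeping. Because $\bar\nu$ is not parallel along $S_{\xi,\lambda}$, every tangential differentiation of an expression containing $\bar\nu$ or $\bar g(a,\bar\nu)$ generates a lower-order $\lambda^{-1}$ correction through $\bar D\bar\nu=\lambda^{-1}\,\operatorname{Id}$, and the validity of the identity rests on all of these corrections cancelling exactly. I would therefore carry out the accounting coefficient by coefficient, tracking the three scalar integrands $\sigma(\bar\nu,\bar\nu)\,\bar g(a,\bar\nu)$, $\sigma(a,\bar\nu)$, and $\operatorname{tr}\sigma\,\bar g(a,\bar\nu)$ separately, and verify that each combined coefficient vanishes. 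I expect no decay or regularity of $\sigma$ beyond differentiability to be needed, consistent with the statement holding for arbitrary $\xi\in\mathbb{R}^3$ and $\lambda>0$.
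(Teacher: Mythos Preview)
Your approach is correct and is essentially the same as the paper's. The paper does not prove this lemma directly (it simply cites \cite{chodoshfar}), but it does prove the more general Lemma~\ref{ce int by parts new} for an arbitrary closed surface $\Sigma\subset\mathbb{R}^3$; specializing that proof to $\Sigma=S_{\xi,\lambda}$, where $\hbarcirc=0$ and $\bar H=2\,\lambda^{-1}$, yields exactly the identity here. That proof proceeds by the same splitting $a=a^\top+\bar g(a,\bar\nu)\,\bar\nu$ and recognizes the tangential pieces as an intrinsic divergence
\[
\bar{\operatorname{div}}_\Sigma\Big((\operatorname{tr}\sigma)\,a-\sigma(\bar\nu,\bar\nu)\,a+\bar g(a,\bar\nu)\,\textstyle\sum_i\sigma(e_i,\bar\nu)\,e_i\Big),
\]
then applies the first variation formula. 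The only difference is organizational: the paper treats $\bar D_a\operatorname{tr}\sigma-(\bar D_a\sigma)(\bar\nu,\bar\nu)$ as a single package and identifies one divergence, whereas you first dispose of $\bar D_a\operatorname{tr}\sigma$ and then handle $(\bar D_a\sigma)(\bar\nu,\bar\nu)$ via a second application of the tangential divergence theorem. The bookkeeping you anticipate is precisely the content of the paper's pointwise identity, and no issue arises.
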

In the following lemma, we adapt Lemma \ref{ce int by parts rema} to a general closed hypersurface $\Sigma\subset\mathbb{R}^n$.
\begin{lem} Let  $\Sigma\subset\mathbb{R}^n$ be a closed hypersurface.  \label{ce int by parts new} There holds 
	\begin{align*}& \int_{\Sigma}\bar D_a\bar{\operatorname{tr}}\,\sigma-(\bar D_a\sigma)(\bar\nu,\bar\nu)-\bar H\,\bar{\operatorname{tr}}\,\sigma\,\bar g(a,\bar\nu)\,\mathrm{d}\bar\mu
	\\&\qquad =\int_{\Sigma}\bar g(a,\bar\nu)\,\big[\bar D_{\bar\nu}\bar{\operatorname{tr}}\,\sigma- (\bar{\operatorname{div}}\,\sigma)(\bar\nu)\big] +\frac1{n-1}\,\bar H\,\left[\sigma(a,\bar\nu)-\bar g(a,\bar\nu)\,\bar{\operatorname{tr}}\,\sigma\right]\,\mathrm{d}\bar\mu \\&\qquad\quad+\int_{\Sigma}\bar g({a^\top}\lrcorner \hbarcirc,(\bar\nu\lrcorner\sigma)|_\Sigma)-\bar g(a,\bar\nu)\,\bar g(\hbarcirc,\sigma|_\Sigma)\,\mathrm{d}\bar\mu
	\end{align*}
		for every  $a\in\mathbb{R}^n$.
\end{lem}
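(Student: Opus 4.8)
The plan is to deduce the identity from the divergence theorem on the closed surface $\Sigma$, after first peeling off the normal-direction contribution to the left-hand side. Throughout I work with a local orthonormal frame $\{e_1,e_2\}$ tangent to $\Sigma$ together with $e_3=\bar\nu$, and I write $\operatorname{div}_\Sigma X=\bar{\operatorname{div}}\,X-\bar g(\bar D_{\bar\nu}X,\bar\nu)=\sum_{\alpha=1}^2\bar g(\bar D_{e_\alpha}X,e_\alpha)$ for the tangential divergence of a vector field $X$ defined along $\Sigma$. Since $\Sigma$ is closed, the identity
\begin{align*}
\int_\Sigma\operatorname{div}_\Sigma X\,\mathrm{d}\bar\mu=\int_\Sigma\bar H\,\bar g(X,\bar\nu)\,\mathrm{d}\bar\mu
\end{align*}
holds for every such $X$, and this is the only integration tool I will use; in particular, it vanishes whenever $X$ is tangential.

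First I would decompose $a=a^\top+\bar g(a,\bar\nu)\,\bar\nu$, so that $\bar D_a=\bar D_{a^\top}+\bar g(a,\bar\nu)\,\bar D_{\bar\nu}$. Contracting the Euclidean divergence over the adapted frame yields the algebraic identity $(\bar D_{\bar\nu}\sigma)(\bar\nu,\bar\nu)=(\bar{\operatorname{div}}\,\sigma)(\bar\nu)-\sum_{\alpha=1}^2(\bar D_{e_\alpha}\sigma)(e_\alpha,\bar\nu)$, which lets me extract the leading term $\bar g(a,\bar\nu)\,[\bar D_{\bar\nu}\operatorname{tr}\sigma-(\bar{\operatorname{div}}\,\sigma)(\bar\nu)]$ on the right-hand side. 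What remains is the integral of
\begin{align*}
\bar g(a,\bar\nu)\sum_{\alpha=1}^2(\bar D_{e_\alpha}\sigma)(e_\alpha,\bar\nu)+\bar D_{a^\top}\operatorname{tr}\sigma-(\bar D_{a^\top}\sigma)(\bar\nu,\bar\nu),
\end{align*}
in which every derivative is now tangential to $\Sigma$.

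The core of the argument is to integrate this remainder by parts using the three tangential vector fields $(\operatorname{tr}\sigma)\,a^\top$, $\sigma(\bar\nu,\bar\nu)\,a^\top$, and $\bar g(a,\bar\nu)\sum_{\beta=1}^2\sigma(\bar\nu,e_\beta)\,e_\beta$. Applying the divergence theorem to each and using $\operatorname{div}_\Sigma a^\top=-\bar H\,\bar g(a,\bar\nu)$, the second fundamental form enters through the Weingarten relations $\bar D_{a^\top}\bar\nu=\sum_\beta\bar h(a^\top,e_\beta)\,e_\beta$ and $\bar D_{e_\alpha}\bar\nu=\sum_\gamma\bar h_{\alpha\gamma}e_\gamma$, and through the Gauss formula $\bar D_{e_\alpha}e_\beta=-\bar h_{\alpha\beta}\,\bar\nu$ for the normal component. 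Collecting the resulting terms and substituting $\bar h_{\alpha\beta}=\hbarcirc_{\alpha\beta}+\tfrac12\,\bar H\,\delta_{\alpha\beta}$ separates the output into the mean-curvature piece $\tfrac12\,\bar H\,[\sigma(\bar\nu,a)+\bar g(a,\bar\nu)\,\operatorname{tr}\sigma]$ and the trace-free piece $\bar g(a^\top\lrcorner\hbarcirc,(\bar\nu\lrcorner\sigma)|_\Sigma)-\bar g(a,\bar\nu)\,\bar g(\hbarcirc,\sigma|_\Sigma)$, which is exactly the claim.

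I expect the bookkeeping of the curvature terms to be the main obstacle: several contributions proportional to $\bar H\,\bar g(a,\bar\nu)\,\sigma(\bar\nu,\bar\nu)$ and to the tangential trace $\sum_\alpha\sigma(e_\alpha,e_\alpha)$ arise with competing signs and must combine correctly. In particular, the sign in the Gauss formula $\bar D_{e_\alpha}e_\beta=-\bar h_{\alpha\beta}\,\bar\nu$ is decisive; it is precisely this sign that makes the $\sigma(\bar\nu,\bar\nu)$ contributions cancel rather than accumulate. As a consistency check, on a round sphere one has $\hbarcirc=0$ and $\bar H=2\,\lambda^{-1}$, and the identity then reduces to Lemma \ref{ce int by parts rema}.
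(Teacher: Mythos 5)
Your proposal is correct and takes essentially the same route as the paper: both decompose $a=a^\top+\bar g(a,\bar\nu)\,\bar\nu$, use the frame identity $(\bar D_{\bar\nu}\sigma)(\bar\nu,\bar\nu)=(\bar{\operatorname{div}}\,\sigma)(\bar\nu)-\sum_{\alpha}(\bar D_{e_\alpha}\sigma)(e_\alpha,\bar\nu)$ to extract the leading term, rewrite the tangential remainder as surface divergences plus Gauss--Weingarten curvature terms with $\bar h=\hbarcirc+\tfrac12\,\bar H\,\bar g|_\Sigma$, and conclude with the first variation formula. The only difference is organizational: the paper packages everything into the single vector field $(\operatorname{tr}\sigma)\,a-\sigma(\bar\nu,\bar\nu)\,a+\bar g(a,\bar\nu)\sum_{i}\sigma(e_i,\bar\nu)\,e_i$, whose normal component produces the $\bar H\,\operatorname{tr}\sigma\,\bar g(a,\bar\nu)$ term through the first variation formula, whereas you use the three tangential projections and generate the same term via $\operatorname{div}_\Sigma a^\top=-\bar H\,\bar g(a,\bar\nu)$.
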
 
\begin{proof}
	Let $\{f_1,\ldots, f_{n-1}\}$ be a local Euclidean orthonormal frame of $T\Sigma$. We have
	\begin{equation*}
	\begin{aligned}  
	\bar D_a\bar{\operatorname{tr}}\,\sigma-(\bar D_a\sigma)(\bar\nu,\bar\nu)=\,&\bar g(a,\bar\nu)\,\big[\bar D_{\bar\nu}\bar{\operatorname{tr}}\,\sigma- (\bar{\operatorname{div}}\,\sigma)(\bar\nu)\big]\\&\,+\bar D_{a^\top}\bar{\operatorname{tr}}\,\sigma-(\bar D_{a^\top}\sigma)(\bar\nu,\bar\nu)+\bar g(a,\bar\nu)\,\sum_{\alpha=1}^{n-1}(\bar D_{f_\alpha}\sigma)(f_\alpha,\bar\nu).
	\end{aligned} 
	\end{equation*}
	Note that
	\begin{equation*}\begin{aligned}
&	\bar D_{a^\top}\bar{\operatorname{tr}}\,\sigma-(\bar D_{a^\top}\sigma)(\bar\nu,\bar\nu)+\bar g(a,\bar\nu)\,\sum_{\alpha=1}^{n-1}(\bar D_{f_\alpha}\sigma)(f_\alpha,\bar\nu)-\bar H\,\bar{\operatorname{tr}}\,\sigma\,\bar g(a,\bar\nu) \\&\qquad =
	\bar{\operatorname{div}}_\Sigma\left((\bar{\operatorname{tr}}\,\sigma)\,a^\top-\sigma(\bar\nu,\bar\nu)\,a^\top+\bar g(a,\bar\nu)\,\sum_{\alpha=1}^{n-1}\sigma(f_\alpha,\bar\nu)\,f_\alpha\right )
	\\
	&\qquad\quad +\frac1{n-1}\,\bar H\,\big[\sigma(a,\bar\nu)-\bar g(a,\bar\nu)\,\bar{\operatorname{tr}}\,\sigma\big]+\bar g({a^\top}\lrcorner \hbarcirc,(\bar\nu\lrcorner\sigma)|_\Sigma)-\bar g(a,\bar\nu)\,\bar g(\hbarcirc,\sigma|_\Sigma).
	\end{aligned}\end{equation*} 
	The assertion follows from these identities and the first variation formula.
\end{proof} 

\section{Some geometric expansions}
	We collect several geometric expansions needed in this paper.
	\\ \indent 
	Let $n\geq 3$. We assume that $g$ is a  Riemannian metric on $\mathbb{R}^n$  and that there is $\tau\in((n-2)/2,n-2]$ with
	\begin{align*} 
		g=\bar g+\sigma \qquad \text{where} \qquad 
		\partial_J \sigma=O\left(|x|^{-\tau-|J|}\right)
	\end{align*} 
for every multi-index $J$ with $|J|\leq 2$.\\ \indent 
	  Recall that $S_{\xi,\lambda}=\{x\in\mathbb{R}^n:|x-\lambda\,\xi|=\lambda\}$ where $\xi\in\mathbb{R}^n$ and $\lambda>1$ and that $e_1,\ldots,e_n$ is the standard basis of $\mathbb{R}^n$.
\begin{lem} Let $\delta\in(0,1/2)$ and $a\in\mathbb{R}^n$ with $|a|=1$.  There holds
	\begin{equation*} 
	\begin{aligned}
	\operatorname{div}a=&\,\frac12\,\bar D_a\bar{\operatorname{tr}}\,\sigma +O(\lambda^{-1-2\,\tau}),\\
	g(D_\nu a,\nu)=&\,\frac12\,(\bar D_a \sigma)(\bar\nu,\bar\nu)+O(\lambda^{-1-2\,\tau}), \\
	g(a,\nu)=&\,\bar g(a,\bar\nu)+ \frac12\,\bar g(a,\bar\nu)\,\sigma(\bar\nu,\bar\nu)+O(\lambda^{-2\,\tau})
	\end{aligned}
	\end{equation*} 
	on $S_{\xi,\lambda}$  uniformly for all $\xi\in\mathbb{R}^n$ with $|\xi|<1-\delta$. Moreover, 
		\label{perturbations}
	\begin{align*}
	\nu-\bar\nu=&\,\frac12\,\sigma(\bar\nu,\bar\nu)\,\bar\nu-\sum_{k=1}^{n}\sigma(\bar\nu,e_k)\,e_k+O(\lambda^{-2\,\tau}),\\
		\mathrm{d}\mu=&\,\left[1+\frac{1}{2}[\bar{\operatorname{tr}}\,\sigma-\sigma(\bar\nu,\bar\nu)]+O(\lambda^{-2\,\tau})\right]\,\mathrm{d}\bar\mu,
	\\	H-\bar H=&\,\lambda^{-1}\,\left[2\,\sigma(\bar\nu,\bar\nu)-\bar{\operatorname{tr}}\,\sigma\right]+\frac12 \,\left[\bar D_{\bar\nu}\bar{\operatorname{tr}}\,\sigma+(\bar D_{\bar\nu}\sigma) (\bar\nu,\bar\nu)-2\,(\bar{\operatorname{div}}\,\sigma)(\bar\nu)\right]\\&\qquad
+\lambda^{-1}\,\sigma*\sigma*\bar\nu*\bar\nu*\bar\nu*\bar\nu+\sigma*\bar D\sigma*\bar\nu
\\&\qquad	+O(\lambda^{-1-3\,\tau}),
\\h-\bar h=&\,O(\lambda^{-1-\tau}).
	\end{align*}
These expressions may be differentiated once with respect to $\xi$. 
\end{lem}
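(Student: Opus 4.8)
The plan is to treat every line of the lemma as a Taylor expansion in $\sigma$, exploiting that $g=\bar g+\sigma$ and that on $S_{\xi,\lambda}$ with $|\xi|<1-\delta$ one has $|x|\asymp\lambda$ uniformly, so $\partial_J\sigma=O(|x|^{-\tau-|J|})=O(\lambda^{-\tau-|J|})$ uniformly in $\xi$. The single object underlying all computations is the difference-of-connections tensor $C=D-\bar D$, whose Cartesian components are $C^k_{ij}=\tfrac12\,g^{kl}(\bar D_i\sigma_{jl}+\bar D_j\sigma_{il}-\bar D_l\sigma_{ij})=O(\lambda^{-1-\tau})$, together with the expansion $g^{ij}=\bar g^{ij}-\sigma^{ij}+O(\lambda^{-2\tau})$.

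First I would dispatch the two first-order identities. Since $a$ is a constant vector field, $\bar Da=0$, so $D_Xa=C(X,a)$ and the full divergence is $\operatorname{div}a=\partial_j\log\sqrt{\det g}\;a^j=\tfrac12\,\bar D_a\operatorname{tr}\sigma+O(\lambda^{-1-2\tau})$, using $\det g=\det\bar g\,(1+\operatorname{tr}(\bar g^{-1}\sigma)+O(\lambda^{-2\tau}))$ in Cartesian coordinates. For $g(D_\nu a,\nu)$ I would first replace $\nu$ by $\bar\nu$ and $g^{kl}$ by $\bar g^{kl}$ at the cost of $O(\lambda^{-1-2\tau})$ each, since $\nu-\bar\nu=O(\lambda^{-\tau})$ and $C=O(\lambda^{-1-\tau})$, and then observe that in $\bar\nu_k\,\bar\nu^i\,C^k_{ij}\,a^j$ the two terms carrying $\partial_i\sigma_{jk}$ and $\partial_k\sigma_{ij}$ cancel by the symmetry of $\sigma$, leaving $\tfrac12(\bar D_a\sigma)(\bar\nu,\bar\nu)$.

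Next I would pin down $\nu$ by writing $\nu=c\,(\bar\nu+v)$ with $v\in TS_{\xi,\lambda}$ and imposing $g(\nu,f_\alpha)=0$ and $g(\nu,\nu)=1$. The tangency condition gives $v_\alpha=-\sigma(\bar\nu,f_\alpha)+O(\lambda^{-2\tau})$ and the normalization gives $c=1-\tfrac12\sigma(\bar\nu,\bar\nu)+O(\lambda^{-2\tau})$ (using $g(\bar\nu,\bar\nu)=1+\sigma(\bar\nu,\bar\nu)$ and $g(\bar\nu,v)=O(\lambda^{-2\tau})$), which assembles into the stated formula for $\nu-\bar\nu$. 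The area element then follows from $\mathrm{d}\mu=\sqrt{\det(g|_\Sigma)}\,\mathrm{d}x=\sqrt{\det(\bar g|_\Sigma)}\,(1+\tfrac12\operatorname{tr}_{\bar g|_\Sigma}\sigma+O(\lambda^{-2\tau}))\,\mathrm{d}x$ together with $\operatorname{tr}_{\bar g|_\Sigma}\sigma=\operatorname{tr}\sigma-\sigma(\bar\nu,\bar\nu)$.

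The main work, and the only genuinely delicate step, is the expansion of $H-\bar H$, which must be carried to \emph{second} order in $\sigma$ with remainder $O(\lambda^{-1-3\tau})$. Here I would use $H=\operatorname{div}_g\nu$ with $\nu=\nabla^g f/|\nabla^g f|_g$ for the distance function $f=|x-\lambda\,\xi|$, so that $\bar\nabla f=\bar\nu$ and $\bar H=2\,\lambda^{-1}$; expanding $|\nabla^g f|_g^2=1-\sigma(\bar\nu,\bar\nu)+O(\lambda^{-2\tau})$ and $\operatorname{div}_g\nu=\tfrac{1}{\sqrt{\det g}}\,\partial_i(\sqrt{\det g}\,\nu^i)$ produces the first-order term $\tfrac12[\bar D_{\bar\nu}\operatorname{tr}\sigma+(\bar D_{\bar\nu}\sigma)(\bar\nu,\bar\nu)-2\,(\bar{\operatorname{div}}\,\sigma)(\bar\nu)]$, while the derivatives falling on $\bar\nu$ and on the volume factor contribute the curvature term $\lambda^{-1}[2\,\sigma(\bar\nu,\bar\nu)-\operatorname{tr}\sigma]$. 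The hard part is the bookkeeping: I must organize the second-order contributions — those where two factors of $\sigma$, or one factor of $\sigma$ and one of $\bar D\sigma$, meet — into the schematic shape $\lambda^{-1}\,\sigma*\sigma*\bar\nu*\bar\nu*\bar\nu*\bar\nu+\sigma*\bar D\sigma*\bar\nu$, and then confirm that everything left over is genuinely $O(\lambda^{-1-3\tau})$. I expect the cross terms between the expansion of $|\nabla^g f|_g$ and that of $g^{ij}$ to be the most error-prone point, so I would first isolate the linearization of $H$ under $g\mapsto\bar g+\sigma$ as a clean tensorial identity and only afterwards substitute the explicit sphere geometry.
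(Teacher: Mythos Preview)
Your approach is correct and is in fact the standard way these expansions are derived. The paper does not supply a proof of this lemma at all: it is stated in an appendix as a list of routine geometric expansions and left to the reader, so there is no ``paper's proof'' to compare against. Your outline would constitute a complete proof once the $H-\bar H$ bookkeeping is written out; the schematic second-order terms $\lambda^{-1}\,\sigma*\sigma*\bar\nu^{\otimes 4}$ and $\sigma*\bar D\sigma*\bar\nu$ in the statement signal that the authors themselves only need the structure of these terms, not their explicit coefficients, so your level-set computation $H=\operatorname{div}_g(\nabla^g f/|\nabla^g f|_g)$ need only be expanded far enough to see that every quadratic contribution carries either a factor of $\bar\nabla\bar\nu$ (hence $\lambda^{-1}$) paired with $\sigma\otimes\sigma$, or a factor $\sigma\otimes\bar D\sigma$, and that cubic terms are $O(\lambda^{-1-3\tau})$.

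One small point worth tightening: in your treatment of $g(D_\nu a,\nu)$ you replace $g^{kl}$ by $\bar g^{kl}$, but this step is unnecessary, since contracting $g_{kl}$ into $C^k_{ij}=\tfrac12\,g^{km}(\cdots)$ gives exactly $\tfrac12(\partial_i\sigma_{jl}+\partial_j\sigma_{il}-\partial_l\sigma_{ij})$ with no error. The cancellation between the first and third terms then holds for $\nu$ itself (not just $\bar\nu$), and only the residual $(\bar D_a\sigma)(\nu,\nu)\to(\bar D_a\sigma)(\bar\nu,\bar\nu)$ costs $O(\lambda^{-1-2\tau})$.
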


\begin{lem}
	
	Let $\{\Sigma_i\}^\infty_{i=1}$ be a sequence \label{q perturbations} of surfaces $\Sigma_i\subset \mathbb{R}^n$ with $\lim_{i\to\infty}\operatorname{dist}(\Sigma_i,0)=\infty$. Let $a\in\mathbb{R}^n$ with $|a|=1$.  There holds 
	\begin{equation*} 
	\begin{aligned}
	\operatorname{div}a=&\,\frac12\,\bar D_a\bar{\operatorname{tr}}\,\sigma +O(|x|^{-1-2\,\tau}),\\
	g(D_\nu a,\nu)=&\,\frac12\,(\bar D_a \sigma)(\bar\nu,\bar\nu)+O(|x|^{-1-2\,\tau}),\\
		g(a,\nu)=&\,\bar g(a,\bar\nu)+ \frac12\,\bar g(a,\bar\nu)\,\sigma(\bar\nu,\bar\nu)+O(|x|^{-2\,\tau})
	\end{aligned}
	\end{equation*} 
	as $i\to\infty$. Moreover, 
	\begin{equation*}
	\begin{aligned}
	\nu-\bar\nu=&\,\frac12\,\sigma(\bar\nu,\bar\nu)\,\bar\nu-\sum_{k=1}^{n}\sigma(\bar\nu,e_k)\,e_k+O(|x|^{-2\,\tau}), \\
	\mathrm{d}\mu=&\,\left[1+\frac{1}{2}[\bar{\operatorname{tr}}\,\sigma-\sigma(\bar\nu,\bar\nu)]+O(|x|^{-2\,\tau})\right]\,\mathrm{d}\bar\mu.
	\end{aligned}
	\end{equation*}
\end{lem}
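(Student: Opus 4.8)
The plan is to establish each of the four expansions as a purely pointwise identity at an arbitrary point $x\in\Sigma_i$, and to read off the error bounds from the decay $\partial_J\sigma=O(|x|^{-\tau-|J|})$ together with the fact that $\rho(\Sigma_i)\to\infty$ forces $|x|\to\infty$ uniformly on $\Sigma_i$; in particular $\sigma(x)$ and $\bar D\sigma(x)$ become uniformly small, which legitimizes the Taylor and Neumann expansions below. None of the four computations uses that $\Sigma_i$ is a coordinate sphere, so they are identical to the corresponding pointwise steps behind Lemma \ref{perturbations}; the only change is that the errors, which are products of $\sigma$ and $\bar D\sigma$ evaluated at $x$, are now recorded in terms of $|x|$ rather than the radius of a coordinate sphere.

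For the divergence, I would use that for the constant-coefficient vector field $a$ one has $\operatorname{div}a=\tfrac12\,a^i\,\partial_i\log\det g$, and expand $\log\det(\bar g+\sigma)=\operatorname{tr}\sigma-\tfrac12\operatorname{tr}(\sigma^2)+\cdots$. Differentiating, the quadratic and higher terms contribute $O(|\sigma|\,|\bar D\sigma|)=O(|x|^{-1-2\,\tau})$, leaving $\tfrac12\,\bar D_a\operatorname{tr}\sigma$. For $g(D_\nu a,\nu)$, I would write $D_\nu a=\nu^j\,\Gamma^i_{jk}\,a^k\,\partial_i$ (the partials of $a$ vanish) with $\Gamma^i_{jk}=\tfrac12(\partial_j\sigma_{ki}+\partial_k\sigma_{ji}-\partial_i\sigma_{jk})+O(|x|^{-1-2\,\tau})$, and contract against $\nu$. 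Replacing $\nu$ by $\bar\nu$ and $g^{ij}$ by $\delta^{ij}$ costs only $O(|x|^{-1-2\,\tau})$; the first and third terms then cancel by symmetry of $\sigma$, and the surviving middle term is exactly $\tfrac12\,(\bar D_a\sigma)(\bar\nu,\bar\nu)$.

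For the normal, I would seek $\nu=\alpha\,(\bar\nu+w)$ with $w\in T\Sigma_i$. The condition $g(\nu,X)=0$ for all $X\in T\Sigma_i$ gives, to leading order, $\bar g(w,X)=-\sigma(\bar\nu,X)$, so $w=-\sum_{\alpha=1}^2\sigma(\bar\nu,f_\alpha)\,f_\alpha+O(|x|^{-2\,\tau})$; the normalization $g(\nu,\nu)=1$ gives $\alpha=1-\tfrac12\sigma(\bar\nu,\bar\nu)+O(|x|^{-2\,\tau})$ (using $\bar g(\bar\nu,w)=0$), and multiplying out yields the stated formula for $\nu-\bar\nu$. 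For the area element, I would compute the ratio of induced volume forms as $\sqrt{\det(g|_{\Sigma_i})/\det(\bar g|_{\Sigma_i})}$ in the $\bar g$-orthonormal frame $\{f_1,f_2\}$; here $g|_{\Sigma_i}$ has matrix $\delta_{\alpha\beta}+\sigma(f_\alpha,f_\beta)$, whose determinant is $1+\operatorname{tr}\sigma-\sigma(\bar\nu,\bar\nu)+O(|x|^{-2\,\tau})$, giving the claimed expansion of $\mathrm{d}\mu$.

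There is no serious obstacle: every step is a first-order perturbation of Euclidean geometry, and the errors are controlled termwise by the decay hypothesis. The only points requiring a little care are the cancellation of the two Christoffel terms in $g(D_\nu a,\nu)$, which relies on the symmetry of $\sigma$, and the bookkeeping that each neglected term is genuinely a pointwise product of factors of $\sigma$ and $\bar D\sigma$ at $x$, so that it is $O(|x|^{-2\,\tau})$ or $O(|x|^{-1-2\,\tau})$ as claimed rather than merely $o(1)$.
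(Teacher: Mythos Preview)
Your proposal is correct. The paper itself states this lemma without proof, treating these expansions as standard; your pointwise computations via $\log\det g$, the Christoffel symbols, and the ansatz $\nu=\alpha\bar\nu+w$ are exactly the expected derivations, and the error bookkeeping is accurate.
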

\begin{lem}
	There holds
	\begin{equation} \label{scalar curvature} 
		\begin{aligned} 
	R=\,&\bar{\operatorname{div}}\,\bar{\operatorname{div}}\,\sigma-\bar\Delta\bar{\operatorname{tr}}\,\sigma+\sigma*\bar D^2\sigma+\bar D\sigma*\bar D\sigma
+\sigma*\sigma*\bar D^2\sigma+\sigma*\bar D\sigma*\bar D\sigma+O(|x|^{-2-4\,\tau}).
	\end{aligned} 
	\end{equation}
\end{lem}

\section{Stable CMC spheres in asymptotically Schwarzschild manifolds} \label{Schwarzschild appendix} Let $(M,g)$ be a connected complete Riemannian manifold of dimension $n= 3$ that is $C^4$-asymptotic to Schwarzschild \eqref{Schwarzschild metric} with mass $m>0$. In their pioneering paper \cite{HuiskenYau},  G.~Huisken and S.-T.~Yau have shown  that there is a distinguished family \begin{align} \label{cmc foliation} \{\Sigma(H):H\in(0,H_0)\},
\end{align}
 $H_0>0$,  of stable constant mean curvature spheres $\Sigma(H)\subset M$ with mean curvature $H$ that forms a foliation of the complement of a compact subset of $M$. Moreover, they have  established a  characterization of the leaves of this foliation; see \cite[Theorem 5.1]{HuiskenYau}. As they explain in \cite[Theorem 4.2 and Remark 4.3]{HuiskenYau}, the foliation \eqref{cmc foliation} gives rise to both a canonical asymptotic coordinate system of $(M, g)$ and  a notion of geometric center of mass defined by $C_{CMC}=(C_{CMC}^1,\,C_{CMC}^2,\,C_{CMC}^3)$ where
\begin{align} \label{geometri com appendix} 
C_{CMC}^\ell=\lim_{H\to0}|\Sigma(H)|^{-1}\,\int_{\Sigma{(H)}}x^\ell\,\mathrm{d}\mu
\end{align} 
provided the limits on the right-hand side exist. \\ \indent   The results in \cite{HuiskenYau} have been improved in various directions. J.~Metzger has shown in \cite{Metzger} that the foliation \eqref{cmc foliation} exists under weaker regularity assumptions  on $(M,g)$.  The characterization of the leaves of the foliation has been strengthened by J.~Qing and G.~Tian  \cite{QingTian}. They show that the leaves are the only large stable constant mean curvature spheres that enclose the center of $(M, g)$. J.~Metzger and the first-named author have shown in \cite[Theorem 1.1]{EichmairMetzger} that the leaves are the only isoperimetric surfaces of their respective volume in $(M, g)$. In the case where the scalar curvature of $(M, g)$ is nonnegative, the characterization of the leaves has been refined further by J.~Metzger and the first-named author \cite{Eichmair-Metzer:2012}, by S.~Brendle and the first-named author \cite{BrendleEichmair}, by A.~Carlotto, O.~Chodosh, and the first-named author  \cite{CCE}, by O.~Chodosh and the first named-author  \cite{chodosh2017global,chodoshfar}, and by the authors \cite{acws}. In particular, if the scalar curvature of $(M,g)$ is nonnegative and satisfies a mild growth condition, then the leaves of the foliation \eqref{cmc foliation} are the only large closed stable constant mean curvature surfaces; see \cite[Theorem 50]{acws} and the discussion there. An alternative proof of the characterization result by J.~Qing and G.~Tian \cite{QingTian} in the case where the scalar curvature of $(M, g)$ is nonnegative has been given by O.~Chodosh and the first-named author in \cite[Appendix C]{chodosh2017global}. L.-H.~Huang has shown in \cite[Theorem 2]{Huang2} that the geometric center of mass \eqref{geometri com appendix} agrees with the Hamiltonian center of mass \eqref{center of mass} of $(M,g)$ provided either one of them exists. Moreover, C.~Cederbaum and C.~Nerz have shown that the existence of the limits in \eqref{geometri com appendix} requires additional assumptions to those stated in \cite{HuiskenYau}; see \cite[p.~1624]{CederbaumNerz}.  Finally, we mention that J.~Metzger and the first-named author \cite[Theorem 1.1]{EMinv} have shown that, also in the case where $n\geq 3$, there exists an asymptotic foliation of $(M,g)$ by stable constant mean curvature spheres, which are the unique solutions of the isoperimetric problem for the volume they enclose, and that the geometric center of mass \eqref{geometri com appendix} agrees with the Hamiltonian center of mass \eqref{center of mass} of $(M,g)$ provided either one of them exists. 

\section{Related results in asymptotically flat Riemannian three-manifolds}
\label{other methods appendix}
In this section, we give an overview of the strategies used in the works of L.-H.~Huang \cite{Huang},  of S.~Ma \cite{Ma}, and  of C.~Nerz \cite{Nerz}. We compare these strategies to the methods  used and developed in this paper. \\ \indent 
Let $(M,g)$ be a connected complete Riemannian three-manifold that is $C^3$-asymptotically flat.\\ \newline 
\textbf{The work of L.-H.~Huang} \cite{Huang}.
 L.-H.~Huang  has established Theorem \ref{existence thm nerz} under the additional assumption that $(M,g)$ satisfies the Regge-Teitelboim conditions in \cite{Huang}. To construct the asymptotic foliation by constant mean curvature spheres, L.-H.~Huang argues in two steps. First, the sphere $S_{\xi,\lambda}$ is perturbed to a surface $\tilde \Sigma_{\xi,\lambda}$ with $\operatorname{proj}_{\Lambda_0(S_{\xi,\lambda})^\perp}H(\tilde \Sigma_{\xi,\lambda})=O(\lambda^{-1-2\,\tau})$; see \cite[Lemma 3.2]{Huang}. This improves  the estimate \eqref{MC expansion}. The Regge-Teitelboim conditions ensure that certain error terms arising in this perturbation are  small; see \cite[(3.6)]{Huang}. Second, the spheres $\tilde \Sigma_{\xi,\lambda}$ are perturbed to obtain a constant mean curvature sphere $\Sigma(\lambda)$; see \cite[Theorem 3.1]{Huang}. L.-H.~Huang observes that this second perturbation is obstructed  unless $m\neq 0$ and  $\xi\approx \lambda^{-1}\,C$, where $C$ is the Hamiltonian center of mass; see \cite[(3.17)]{Huang}. We remark that \eqref{improved estimate} provides a geometric explanation for this obstruction. Note that Theorem \ref{com thm} specified to the case where $n=3$ follows as a corollary from the construction. \\ \indent To establish uniqueness, L.-H.~Huang first derives curvatures estimates for large stable constant mean curvature spheres $\Sigma\subset M$ satisfying \eqref{huang pinch} and proves a result similar to Lemma \ref{distance comp}; see \cite[Corollary 4.11]{Huang}. Using an argument analogous to but slightly coarser than that in the proof of Theorem \ref{uniqueness thm 2}, L.-H.~Huang obtains a preliminary estimate on the barycenter of $\Sigma$. This implies that $\Sigma$ is within the range of the  implicit function theorem. It then follows from local uniqueness that $\Sigma$ belongs to the asymptotic foliation; see \cite[\S4.4]{Huang}. \\ \newline 
\textbf{The work of S.~Ma} \cite{Ma}.   
The proof of Theorem \ref{Ma thm} by S.~Ma \cite{Ma} expands upon the method of  J.~Qing and G.~Tian in \cite{QingTian}. \\ \indent Let $(M,g)$ be asymptotically flat and suppose that $\{\Sigma_i\}_{i=1}^\infty$ is a sequence of stable constant mean curvature spheres $\Sigma_i\subset M$ with $\rho(\Sigma_i)\to\infty$ and $\rho(\Sigma_i)=o(\lambda(\Sigma_i))$ as $i\to\infty$. To begin with, S.~Ma notes that
\begin{align} \label{MA} 
0=\int_{\Sigma_i} (H-\bar H)\,\bar g(a,\bar\nu)\,\mathrm{d}\bar\mu
\end{align} 
for all $i$ and $a\in\mathbb{R}^3$ with $|a|=1$; see \cite[(34)]{Ma}. To obtain a contradiction, S.~Ma shows that the right-hand side of \eqref{MA} is essentially proportional to the mass of $(M,g)$ as $i\to\infty$. To this end, S.~Ma studies the contributions to the integral in \eqref{MA} in the three parts of the surface $\Sigma_i$ where 
\begin{itemize}
\item[$\circ$] $|x|\approx\rho(\Sigma_i)$,
\item[$\circ$] $|x|\approx\lambda(\Sigma_i)$,
\item[$\circ$] $\rho(\Sigma_i)\ll |x|\ll \lambda(\Sigma_i)$;
\end{itemize}
see \cite[\S5]{Ma}.
To obtain sufficient analytic control to estimate these contributions, S.~Ma uses the fact  that $\{\nu(\Sigma_i)\}_{i=1}^\infty$ forms a sequence of almost harmonic maps.
\\ \indent 
By comparison, in our proof of Theorem \ref{uniqueness thm} in Section \ref{sec:uniqueness}, we start with  identity \eqref{4 0}, which differs slightly from \eqref{MA}. Note that the right-hand side of \eqref{4 0} stands in for the derivative of the function $G_\lambda$. In view of Lemma \ref{g expansion}, we expect this derivative to be proportional to the mass of $(M,g)$. As shown in Proposition \ref{prop CE}, the assumption of nonnegative scalar curvature leads to strong analytic control of the surfaces $\Sigma_i$. This control, in conjunction with the integration by parts formula from Lemma \ref{ce int by parts new}, is sufficient to conclude the argument. In particular, we do not require the delicate analysis of the unit normal as an almost harmonic map in the region where $\rho(\Sigma_i)\ll |x|\ll \lambda(\Sigma_i)$ developed by S.~Ma \cite[\S 4]{Ma} building on the work of J.~Qing and G.~Tian \cite[\S 4]{QingTian}.\\ \newline 
\textbf{The work of C.~Nerz} \cite{Nerz}.
C.~Nerz's proof of Theorem \ref{existence thm nerz} in \cite{Nerz} expands upon the continuity argument developed by J.~Metzger in \cite{Metzger}. Given a $C^2$-asymptotically flat metric $g$ on $\{x\in\mathbb{R}^3:|x|>|m|/2\}$ with mass $m\neq 0$, C.~Nerz defines the family of metrics $\{g_t:t\in[0,1]\}$ where
\begin{align} \label{continuity} 
g_t=t\,g+(1-t)\,\bigg(1+\frac{m}{2\,|x|}\bigg)^4\,\bar g;
\end{align} 
see \cite[\S5]{Nerz}.
Note that $g_1=g$ and that $g_0$  is equal to Schwarzschild  with mass $m$. Let $I\subset[0,1]$ be the set of all $t\in[0,1]$ such that the conclusion of Theorem \ref{existence thm nerz} holds for  $g_t$. The difficult step when proving that $I=[0,1]$ is to show that $I$ is open. To this end, C.~Nerz notes that the stability operator \eqref{stability operator} of large, centered constant mean curvature spheres with nonvanishing Hawking mass is invertible, even if $(M,g)$ is only asymptotically flat; see \cite[Proposition 4.7]{Nerz}. In Lemma \ref{g expansion}, we provide a geometric explanation for this invertibility.  \\ \indent Moreover, using the same kind of continuity argument, C.~Nerz observes that the leaves of the foliation in Theorem \ref{existence thm nerz} are asymptotically symmetric provided $(M,g)$ satisfies the weak Regge-Teitelboim conditions \eqref{RT Nerz}. This step should be  compared to Lemma \ref{u odd lemma}. To establish Theorem \ref{com nerz thm}, C.~Nerz goes on to prove that 
$$
\frac{d}{dt}C_{CMC}(g_t)=C
$$
where $C_{CMC}(g_t)$ is the geometric center of mass \eqref{geometri com} of $(M,g_t)$ and $C$ is the Hamiltonian center of mass \eqref{center of mass} of $(M,g)$; see \cite[\S6]{Nerz}.
\\ \indent 
To conclude uniqueness in Theorem \ref{existence thm nerz}, C.~Nerz follows an idea developed by J.~Metzger. More precisely, one may reverse the continuity argument \eqref{continuity} and use the local uniqueness of the inverse function theorem and the global uniqueness of constant mean curvature spheres in Schwarzschild \cite{Brendle}; see \cite[\S5]{Nerz}.
\end{appendices}
% \bib, bibdiv, biblist are defined by the amsrefs package.
\begin{bibdiv}
	\begin{biblist}
		
		\bib{ADM}{article}{
			author={Arnowitt, Richard},
			author={Deser, Stanley},
			author={Misner, Charles},
			title={Coordinate invariance and energy expressions in general
				relativity},
			date={1961},
			ISSN={0031-899X},
			journal={Phys. Rev. (2)},
			volume={122},
			pages={997\ndash 1006},
			review={\MR{127946}},
		}
		
		\bib{Bartnik}{article}{
			author={Bartnik, Robert},
			title={The mass of an asymptotically flat manifold},
			date={1986},
			ISSN={0010-3640},
			journal={Comm. Pure Appl. Math.},
			volume={39},
			number={5},
			pages={661\ndash 693},
			url={https://doi.org/10.1002/cpa.3160390505},
			review={\MR{849427}},
		}
		
		\bib{Brendle}{article}{
			author={Brendle, Simon},
			title={Constant mean curvature surfaces in warped product manifolds},
			date={2013},
			ISSN={0073-8301},
			journal={Publ. Math. Inst. Hautes \'{E}tudes Sci.},
			volume={117},
			pages={247\ndash 269},
			url={https://doi.org/10.1007/s10240-012-0047-5},
			review={\MR{3090261}},
		}
		
		\bib{BrendleEichmair}{article}{
			author={Brendle, Simon},
			author={Eichmair, Michael},
			title={Large outlying stable constant mean curvature spheres in initial
				data sets},
			date={2014},
			ISSN={0020-9910},
			journal={Invent. Math.},
			volume={197},
			number={3},
			pages={663\ndash 682},
			url={https://doi.org/10.1007/s00222-013-0494-8},
			review={\MR{3251832}},
		}
		
		\bib{CCE}{article}{
			author={Carlotto, Alessandro},
			author={Chodosh, Otis},
			author={Eichmair, Michael},
			title={Effective versions of the positive mass theorem},
			date={2016},
			ISSN={0020-9910},
			journal={Invent. Math.},
			volume={206},
			number={3},
			pages={975\ndash 1016},
			url={https://doi.org/10.1007/s00222-016-0667-3},
			review={\MR{3573977}},
		}
		
		\bib{SchoenCarlotto}{article}{
			author={Carlotto, Alessandro},
			author={Schoen, Richard},
			title={Localizing solutions of the {E}instein constraint equations},
			date={2016},
			ISSN={0020-9910},
			journal={Invent. Math.},
			volume={205},
			number={3},
			pages={559\ndash 615},
			url={https://doi.org/10.1007/s00222-015-0642-4},
			review={\MR{3539922}},
		}
		
		\bib{CederbaumNerz}{article}{
			author={Cederbaum, Carla},
			author={Nerz, Christopher},
			title={Explicit {R}iemannian manifolds with unexpectedly behaving center
				of mass},
			date={2015},
			ISSN={1424-0637},
			journal={Ann. Henri Poincar\'{e}},
			volume={16},
			number={7},
			pages={1609\ndash 1631},
			url={https://doi.org/10.1007/s00023-014-0346-0},
			review={\MR{3356098}},
		}
		
		\bib{chodoshthesis}{book}{
			author={Chodosh, Otis},
			title={The {G}eometry of {A}symptotically {H}yperbolic {M}anifolds},
			publisher={ProQuest LLC, Ann Arbor, MI},
			date={2015},
			ISBN={979-8662-56554-8},
			url={http://gateway.proquest.com/openurl?url_ver=Z39.88-2004&rft_val_fmt=info:ofi/fmt:kev:mtx:dissertation&res_dat=xri:pqm&rft_dat=xri:pqdiss:28119835},
			note={Thesis (Ph.D.)--Stanford University},
			review={\MR{4187497}},
		}
		
		\bib{chodoshfar}{article}{
			author={Chodosh, Otis},
			author={Eichmair, Michael},
			title={On far-outlying constant mean curvature spheres in asymptotically
				flat {R}iemannian 3-manifolds},
			date={2020},
			ISSN={0075-4102},
			journal={J. Reine Angew. Math.},
			volume={767},
			pages={161\ndash 191},
			url={https://doi.org/10.1515/crelle-2019-0034},
			review={\MR{4160305}},
		}
		
		\bib{chodosh2017global}{article}{
			author={Chodosh, Otis},
			author={Eichmair, Michael},
			title={Global uniqueness of large stable {CMC} spheres in asymptotically
				flat {R}iemannian 3-manifolds},
			date={2022},
			ISSN={0012-7094},
			journal={Duke Math. J.},
			volume={171},
			number={1},
			pages={1\ndash 31},
			url={https://doi.org/10.1215/00127094-2021-0043},
			review={\MR{4364730}},
		}
		
		\bib{CESH}{article}{
			author={Chodosh, Otis},
			author={Eichmair, Michael},
			author={Shi, Yuguang},
			author={Yu, Haobin},
			title={Isoperimetry, scalar curvature, and mass in asymptotically flat
				{R}iemannian 3-manifolds},
			date={2021},
			ISSN={0010-3640},
			journal={Comm. Pure Appl. Math.},
			volume={74},
			number={4},
			pages={865\ndash 905},
			url={https://doi.org/10.1002/cpa.21981},
			review={\MR{4221936}},
		}
		
		\bib{ChristodoulouYau}{incollection}{
			author={Christodoulou, Demetrios},
			author={Yau, Shing-Tung},
			title={Some remarks on the quasi-local mass},
			date={1988},
			booktitle={Mathematics and general relativity ({S}anta {C}ruz, {CA}, 1986)},
			series={Contemp. Math.},
			volume={71},
			publisher={Amer. Math. Soc., Providence, RI},
			pages={9\ndash 14},
			url={https://doi.org/10.1090/conm/071/954405},
			review={\MR{954405}},
		}
		
		\bib{CorwinoWu}{article}{
			author={Corvino, Justin},
			author={Wu, Haotian},
			title={On the center of mass of isolated systems},
			date={2008},
			ISSN={0264-9381},
			journal={Classical Quantum Gravity},
			volume={25},
			number={8},
			pages={18 pp.},
			url={https://doi.org/10.1088/0264-9381/25/8/085008},
			review={\MR{2409248}},
		}
		
		\bib{acws}{article}{
			author={Eichmair, Michael},
			author={Koerber, Thomas},
			title={Large area-constrained {W}illmore surfaces in asymptotically
				{S}chwarzschild 3-manifolds},
			date={2021},
			journal={arXiv preprint arXiv:2101.12665},
			url={https://arxiv.org/abs/2101.12665},
			note={to appear in \textit{J. Differential Geom.}},
		}
		
		\bib{eichmair2023schoen}{article}{
			author={Eichmair, Michael},
			author={Koerber, Thomas},
			title={Schoen's conjecture for limits of isoperimetric surfaces},
			date={2023},
			journal={arXiv preprint arXiv:2303.12200},
		}
		
		\bib{Eichmair-Metzer:2012}{article}{
			author={Eichmair, Michael},
			author={Metzger, Jan},
			title={On large volume preserving stable {CMC} surfaces in initial data
				sets},
			date={2012},
			ISSN={0022-040X},
			journal={J. Differential Geom.},
			volume={91},
			number={1},
			pages={81\ndash 102},
			url={http://projecteuclid.org/euclid.jdg/1343133701},
			review={\MR{2944962}},
		}
		
		\bib{EichmairMetzger}{article}{
			author={Eichmair, Michael},
			author={Metzger, Jan},
			title={Large isoperimetric surfaces in initial data sets},
			date={2013},
			ISSN={0022-040X},
			journal={J. Differential Geom.},
			volume={94},
			number={1},
			pages={159\ndash 186},
			url={http://projecteuclid.org/euclid.jdg/1361889064},
			review={\MR{3031863}},
		}
		
		\bib{EMinv}{article}{
			author={Eichmair, Michael},
			author={Metzger, Jan},
			title={Unique isoperimetric foliations of asymptotically flat manifolds
				in all dimensions},
			date={2013},
			ISSN={0020-9910,1432-1297},
			journal={Invent. Math.},
			volume={194},
			number={3},
			pages={591\ndash 630},
			url={https://doi.org/10.1007/s00222-013-0452-5},
			review={\MR{3127063}},
		}
		
		\bib{Huang2}{article}{
			author={Huang, Lan-Hsuan},
			title={On the center of mass of isolated systems with general
				asymptotics},
			date={2009},
			ISSN={0264-9381},
			journal={Classical Quantum Gravity},
			volume={26},
			number={1},
			pages={015012, 25},
			url={https://doi.org/10.1088/0264-9381/26/1/015012},
			review={\MR{2470255}},
		}
		
		\bib{Huang}{article}{
			author={Huang, Lan-Hsuan},
			title={Foliations by stable spheres with constant mean curvature for
				isolated systems with general asymptotics},
			date={2010},
			ISSN={0010-3616},
			journal={Comm. Math. Phys.},
			volume={300},
			number={2},
			pages={331\ndash 373},
			url={https://doi.org/10.1007/s00220-010-1100-1},
			review={\MR{2728728}},
		}
		
		\bib{HI}{article}{
			author={Huisken, Gerhard},
			author={Ilmanen, Tom},
			title={The inverse mean curvature flow and the {R}iemannian {P}enrose
				inequality},
			date={2001},
			ISSN={0022-040X},
			journal={J. Differential Geom.},
			volume={59},
			number={3},
			pages={353\ndash 437},
			url={http://projecteuclid.org/euclid.jdg/1090349447},
			review={\MR{1916951}},
		}
		
		\bib{HuiskenYau}{article}{
			author={Huisken, Gerhard},
			author={Yau, Shing-Tung},
			title={Definition of center of mass for isolated physical systems and
				unique foliations by stable spheres with constant mean curvature},
			date={1996},
			ISSN={0020-9910},
			journal={Invent. Math.},
			volume={124},
			number={1-3},
			pages={281\ndash 311},
			url={https://doi.org/10.1007/s002220050054},
			review={\MR{1369419}},
		}
		
		\bib{Ma}{article}{
			author={Ma, Shiguang},
			title={On the radius pinching estimate and uniqueness of the {CMC}
				foliation in asymptotically flat 3-manifolds},
			date={2016},
			ISSN={0001-8708},
			journal={Adv. Math.},
			volume={288},
			pages={942\ndash 984},
			url={https://doi.org/10.1016/j.aim.2015.11.009},
			review={\MR{3436403}},
		}
		
		\bib{Metzger}{article}{
			author={Metzger, Jan},
			title={Foliations of asymptotically flat 3-manifolds by 2-surfaces of
				prescribed mean curvature},
			date={2007},
			ISSN={0022-040X},
			journal={J. Differential Geom.},
			volume={77},
			number={2},
			pages={201\ndash 236},
			url={http://projecteuclid.org/euclid.jdg/1191860394},
			review={\MR{2355784}},
		}
		
		\bib{Nerz}{article}{
			author={Nerz, Christopher},
			title={Foliations by stable spheres with constant mean curvature for
				isolated systems without asymptotic symmetry},
			date={2015},
			ISSN={0944-2669},
			journal={Calc. Var. Partial Differential Equations},
			volume={54},
			number={2},
			pages={1911\ndash 1946},
			url={https://doi.org/10.1007/s00526-015-0849-7},
			review={\MR{3396437}},
		}
		
		\bib{QingTian}{article}{
			author={Qing, Jie},
			author={Tian, Gang},
			title={On the uniqueness of the foliation of spheres of constant mean
				curvature in asymptotically flat 3-manifolds},
			date={2007},
			ISSN={0894-0347},
			journal={J. Amer. Math. Soc.},
			volume={20},
			number={4},
			pages={1091\ndash 1110},
			url={https://doi.org/10.1090/S0894-0347-07-00560-7},
			review={\MR{2328717}},
		}
		
		\bib{RT}{article}{
			author={Regge, Tullio},
			author={Teitelboim, Claudio},
			title={Role of surface integrals in the {H}amiltonian formulation of
				general relativity},
			date={1974},
			ISSN={0003-4916},
			journal={Ann. Physics},
			volume={88},
			pages={286\ndash 318},
			url={https://doi.org/10.1016/0003-4916(74)90404-7},
			review={\MR{359663}},
		}
		
		\bib{SchoenYau}{article}{
			author={Schoen, Richard},
			author={Yau, Shing-Tung},
			title={On the proof of the positive mass conjecture in general
				relativity},
			date={1979},
			ISSN={0010-3616},
			journal={Comm. Math. Phys.},
			volume={65},
			number={1},
			pages={45\ndash 76},
			url={http://projecteuclid.org/euclid.cmp/1103904790},
			review={\MR{526976}},
		}
		
		\bib{Simon}{article}{
			author={Simon, Leon},
			title={Existence of surfaces minimizing the {W}illmore functional},
			date={1993},
			ISSN={1019-8385},
			journal={Comm. Anal. Geom.},
			volume={1},
			number={2},
			pages={281\ndash 326},
			url={https://doi.org/10.4310/CAG.1993.v1.n2.a4},
			review={\MR{1243525}},
		}
		
		\bib{Ye}{incollection}{
			author={Ye, Rugang},
			title={Foliation by constant mean curvature spheres on asymptotically
				flat manifolds},
			date={1996},
			booktitle={Geometric analysis and the calculus of variations},
			publisher={Int. Press, Cambridge, MA},
			pages={369\ndash 383},
			review={\MR{1449417}},
		}
		
		\bib{yu2020isoperimetry}{article}{
			author={Yu, Haobin},
			title={Isoperimetry for asymptotically flat 3-manifolds with positive
				{ADM} mass},
			date={2023},
			ISSN={0025-5831,1432-1807},
			journal={Math. Ann.},
			volume={385},
			number={3-4},
			pages={1475\ndash 1492},
			url={https://doi.org/10.1007/s00208-022-02366-z},
			review={\MR{4566697}},
		}
		
	\end{biblist}
\end{bibdiv}
\end{document}